\renewcommand*\env@matrix[1][*\c@MaxMatrixCols c]{%
  \hskip -\arraycolsep
  \let\@ifnextchar\new@ifnextchar
  \array{#1}}
\newtheorem{lemma}{Lemma}[section]
\newtheorem{theorem}[lemma]{Theorem}
\newtheorem{prop}[lemma]{Proposition}
\newtheorem{cor}[lemma]{Corollary}
\newtheorem{claim*}{Claim}
\newtheorem{thm}[lemma]{Theorem}
\newtheorem{defn}[lemma]{Definition}
\theoremstyle{remark}
\newtheorem{remark}[lemma]{Remark}
\newtheorem{remarks}[lemma]{Remarks}
\newcommand{\A}{{\mathbb A}}
\newcommand{\PP}{{\mathbb P}}
\newcommand{\F}{{\mathbb F}}
\newcommand{\Q}{{\mathbb Q}}
\newcommand{\RR}{{\mathbb R}}
\newcommand{\Z}{{\mathbb Z}}
\newcommand{\kbar}{{\overline{k}}}
\newcommand{\Kbar}{{\overline{K}}}
\newcommand{\kk}{{\mathbf k}}
\newcommand{\calB}{{\mathcal B}}
\newcommand{\calC}{{\mathcal C}}
\newcommand{\calD}{{\mathcal D}}
\newcommand{\calG}{{\mathcal G}}
\newcommand{\calO}{{\mathcal O}}
\newcommand{\calT}{{\mathcal T}}
\newcommand{\calU}{{\mathcal U}}
\newcommand{\calZ}{{\mathcal Z}}
\newcommand{\OO}{{\mathcal O}}
\newcommand{\frakm}{{\mathfrak m}}
\newcommand{\frakp}{{\mathfrak p}}
\newcommand{\frakq}{{\mathfrak q}}
\newcommand{\pp}{{\mathfrak p}}
\newcommand{\qq}{{\mathfrak q}}
\newcommand{\mm}{{\mathfrak m}}
\newcommand{\scrX}{{\mathscr X}}
\DeclareMathOperator{\Char}{char}
\DeclareMathOperator{\Gal}{Gal}
\DeclareMathOperator{\Ind}{Ind}
\DeclareMathOperator{\Spec}{Spec}
\DeclareMathOperator{\Frac}{Frac}
\DeclareMathOperator{\rank}{rank}
\DeclareMathOperator{\Stab}{Stab}
\DeclareMathOperator{\ct}{ct}
\numberwithin{equation}{section}
\numberwithin{table}{section}
\newcommand{\defi}[1]{\textsf{#1}} 
\title{Number fields generated by points in linear systems on curves}
\author{Irmak Bal\c{c}{\i}k}
\address{Northwestern University, Department of Mathematics,  2033 Sheridan Road, Evanston, IL 60208, USA}
\email{irmak.balcik@northwestern.edu}
\author{Stephanie Chan}
\address{University College London, Department of Mathematics, Gower Street, London, WC1E 6BT, United Kingdom}
\email{stephanie.chan@ucl.ac.uk}
\author{Yuan Liu}
\address{University of Illinois at Urbana-Champaign, Department of Mathematics, 1409 W Green St, Urbana, IL 61820, USA}
\email{yyyliu@illinois.edu}
\author{Bianca Viray}
\address{University of Washington, Department of Mathematics, Box 354350, Seattle, WA 98195, USA}
\email{bviray@uw.edu}
\urladdr{http://math.washington.edu/\~{}bviray}
\date{\today}
\begin{document}

\begin{abstract}
    We develop techniques for determining the fibers of a morphism of curves \(\psi\colon C \to D\) over a nonarchimedean local field \(K\). These results have applications to studying closed point on curves over global fields since closed points on \(C\) of large degree or of very small degree are known to all arise as fibers of morphisms.
\end{abstract}

\maketitle
\section{Introduction}

Let \(C\) be a smooth projective geometrically integral curve over a number field \(k\). The Riemann--Roch theorem implies that if \(x\in C\) is a closed point of sufficiently large degree then \(x\) arises as the fiber of a morphism \(\psi\colon C \to \PP^1\) with \(\deg(x) = \deg(\psi)\). Conversely, given a degree \(d\) morphism \(\psi\colon C \to \PP^1\), Hilbert's irreducibility theorem implies that there are infinitely many \(t\in \PP^1(k)\) where the fiber \(C_t\) is integral, i.e., is a degree \(d\) point. In particular, \(C\) has infinitely many degree \(d\) points. At the same time, if the genus of \(C\) is at least \(2\), then Faltings's theorem implies that these degree \(d\) integral fibers must be defined over infinitely many degree \(d\) extensions of \(k\).

We ask: can we use the structure of \(\psi\) to understand these degree \(d\) extensions of \(k\) with a view to leveraging morphisms to understand all closed points on \(C\)? As a first step, we seek to understand the local behavior of these extensions, i.e., understanding the isomorphism classes of the \(k_v\) algebras \(\kk(C_t)\otimes_k k_v\). Since \(\PP^1(k)\) is dense in \(\PP^1(k_v)\), this question reduces to a question about covers of \(\PP^1\) over a local field.

In this paper, we develop techniques for determining the fibers of a morphism of  curves \(\psi\colon C \to D\) over a nonarchimedean local field \(K\). If \(\psi\) has \defi{good reduction} (Definition~\ref{def:GoodReductionOfCurves}) then for a closed point \(x\in C\) we show that the possible ramification degrees of the extension \(\kk(x)/\kk(\psi(x))\) of nonarchimedean local fields are determined by the combinatorial ramification data of the map \(\psi\). (The inertia degrees are known to be controlled by the fibers of the morphism over \(\F_v\) in accordance with Hensel's Lemma; see Proposition~\ref{prop:EtAlgUnramified} for more details.) Furthermore, we determine when every tamely ramified extension with allowed ramification degree \(e\) and inertia degree \(f\) does arise as an extension \(\kk(x)/\kk(\psi(x))\); this occurs exactly when the degree \(f\) unramified extension of \(K\) is disjoint from \(K(\mu_{e^{\infty}})\). These results have applications to studying closed point on curves over global fields, both in the case when \(D = \PP^1\) as explained above and also in the case when \(D\) has higher genus, as the so-called parameterized closed points on a curve \(C\) of very low degree arise as pullbacks from a unique morphism \(\pi\colon C \to D\)~\cite{VirayVogt}*{Theorem 6.0.1}.

To precisely state our results, we introduce some notation. Given a \(t\in D(K)\) outside the branch locus, the \(K\)-algebra \(\kk(C_t)\) factors as a product of fields, and this factorization is given by the decomposition of the \(K\)-scheme \(C_t\). 
Given a finite flat morphism \(\Psi\colon \calC \to \calD\) of \(\OO_K\)-schemes (where \(\OO_K\) denotes the valuation ring of \(K\)) whose generic fiber agrees with \(\psi\), then the maximal subfields of \(\kk(C_t)\) are partitioned as follows. Let \(\F_K\) denote the residue field of \(K\) and let \(\overline{t}\in \calD(\F_K)\) denote the specialization of \(t\); then 
\[
    \kk(C_t) = \prod_{\overline{x}\in \calC_{\overline{t}}}\kk(C_{t})_{\overline{x}}, \quad \textup{where}\quad
    \kk(C_{t})_{\overline{x}} := \prod_{\substack{x\in C_{t}\\ x\rightsquigarrow \overline{x}}}\kk(x).
\]

\begin{theorem}[Specialization of Theorem~\ref{thm:RamAndInertiaDegrees}]\label{thm:RamAndInertiaDegreesIntro}
    Let \(\psi\colon C \to D\) be a morphism of smooth projective geometrically integral curves over a nonarchimedean local field \(K\) that has good reduction (so there is an integral model \(\Psi\colon \calC \to \calD\); see Definition~\ref{def:GoodReductionOfCurves}), let \(t\in D(K)\) be outside the branch locus of \(\psi\), let \(\overline{t}\in \calD(\F_K)\) be the specialization of \(t\), and let \(\overline{x}\in \calC_{\overline{t}}\).
    Then every maximal subfield \(L\subset\kk(C_{t})_{\overline{x}}\) has the following numerical invariants.
    \begin{enumerate}
        \item The inertia degree \(f(L/K)\) is divisible by the degree of \(\overline{x}\).
        \item The ramification index \(e(L/K)\) divides the ramification index \(e(\overline{x}/\overline{t})\) of the morphism \(\Psi_{\F_K}\) at the point \(\overline{x}\).
        \item If \(e(L/K)=e(\overline{x}/\overline{t})\) (i.e., the ramification index \(e(L/k)\) is as large as possible), then \(L = \kk(C_{t})_{\overline{x}}\) and the inertia degree is \(f(L/K)=\deg(\overline{x})\).        
    \end{enumerate}
    Conversely, every extension \(L/K\) with ramification degree \(e:= e(\overline{x}/\overline{t})\) and inertia degree \(f:=\deg(\overline{x})\) is isomorphic to \(\kk(C_{t})_{\overline{x}}\) for some \(t\in \calD(K)\) specializing to \(\overline{t}\) \emph{if and only if} 
    \[
        \gcd\left(e, \frac{q^f - 1}{q-1}\right) = 1 \quad \mbox{where }q := \#\F_K.
    \]
\end{theorem}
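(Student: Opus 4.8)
\emph{Reduction to a local realization problem.}
The plan is to push everything into the completed local ring $\widehat{\OO}_{\calC,\overline x}$ and then into a short cyclic‑group computation. Fix a local coordinate $s$ at $\overline t$ and set $\tau:=s(t_v)\in\mm_v$; then $\kk(C_{t_v})_{\overline x}=\widehat{\OO}_{\calC,\overline x}\otimes_{\OO_v[[s]]}k_v$, the base change along $s\mapsto\tau$, an étale $k_v$‑algebra of dimension $ef$. Passing to the unramified extension $\OO_w/\OO_v$ with residue field $\F_v(\overline x)=\F_{q^{f}}$ splits $\overline x$ into $f$ Frobenius‑conjugate $\F_w$‑points and gives $\widehat{\OO}_{\calC,\overline x}\otimes_{\OO_v}\OO_w\cong\prod_{i=1}^{f}\OO_w[[z]]$, where $\Psi$ acts on the $i$‑th factor by $s\mapsto\Phi^{(i)}(z)$ with $\Phi:=\Phi^{(1)}\in\OO_w[[z]]$ satisfying $\overline{\Phi}=z^{e}\cdot(\text{unit})$. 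In the regime of the converse $L:=\kk(C_{t_v})_{\overline x}$ is, by part~(3), a field containing $k_w$ with $e(L/k_v)=e$ and $f(L/k_v)=f$, so any $k_w$‑isomorphism of $L$ with a factor $k_w[z]/(\Phi^{(i)}(z)-\tau)$ is automatically a $k_v$‑isomorphism; conversely every $L/k_v$ with the prescribed invariants contains $k_w$ and is its own base‑change factor. Thus the statement reduces to: for which totally ramified degree $e$ extensions $M/k_w$ is there $\tau\in\mm_v$ (away from the finitely many branch values) with $k_w[z]/(\Phi(z)-\tau)\cong M$?

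\emph{Normal form and the norm of a uniformizer.}
If $e=1$ this is immediate, so assume $e\geq 2$; then $\overline t$ is the specialization of a branch point, and good reduction (the branch locus extends to a subscheme of $\PP^1_{\OO_v}$ finite étale over $\OO_v$, and the ramification of $\psi$ is constant along the residue disk) lets us take $s$ so that $\{s=0\}$ is the branch section through $\overline t$ and forces the Weierstrass polynomial of $\Phi$ to be $(z-r)^{e}$ for some $r\in\mm_w$. Translating $z$ we may assume $\Phi(z)=z^{e}U(z)$ with $U\in\OO_w[[z]]^{\times}$, $u_0:=U(0)$ (and in the tame case $p\nmid e$ even $\Phi(z)=z^{e}$). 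In $L=k_w[z]/(\Phi(z)-\tau)$ the class of $z$ satisfies $z^{e}=\tau/U(z)$; by the Newton polygon it is a uniformizer whenever $e\nmid\val_v(\tau)$, and taking norms gives $N_{L/k_w}(z)^{e}=\tau^{e}\,N_{L/k_w}(U(z))^{-1}=\tau^{e}u_0^{-e}w$ with $w\in 1+\mm_w$, hence $N_{L/k_w}(z)=\tau\,u_0^{-1}\eta$ for some $\eta\in\OO_w^{\times}$ with $\eta^{e}\in 1+\mm_w$. In particular, when $\tau$ is a uniformizer of $k_v$ the residue of $N_{L/k_w}(z)/\pi$ lies in $\overline{u_0}^{-1}\cdot\F_q^{\times}\cdot\mu_{\gcd(e,\,q^{f}-1)}(\F_{q^{f}})$.

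\emph{The cyclic‑group computation.}
Combining Krasner's lemma (the isomorphism class of $k_w[z]/(\Phi(z)-\tau)$ is locally constant in $\tau$ off the discriminant locus) with the classification of totally ramified degree $e$ extensions of $k_w$ through their uniformizer norms modulo $(k_w^{\times})^{e}$, and using that the residue of $\tau/\pi$ sweeps all of $\F_q^{\times}$ as $\tau$ runs over $\pi\OO_v^{\times}$, the extensions realized are exactly those whose norm class in $\F_{q^{f}}^{\times}/\F_{q^{f}}^{\times e}$ lies in $\overline{u_0}^{-1}\cdot\im\!\big(\F_q^{\times}\to\F_{q^{f}}^{\times}/\F_{q^{f}}^{\times e}\big)$. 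Identifying $\F_{q^{f}}^{\times}\cong\Z/(q^{f}-1)$ we have $\F_q^{\times}=\tfrac{q^{f}-1}{q-1}\Z/(q^{f}-1)$ and $\F_{q^{f}}^{\times e}$ of index $\gcd(e,q^{f}-1)$, so this image is everything precisely when $\gcd\!\big(\tfrac{q^{f}-1}{q-1},\gcd(e,q^{f}-1)\big)=1$, i.e.\ (since $\tfrac{q^{f}-1}{q-1}\mid q^{f}-1$) when $\gcd\!\big(\tfrac{q^{f}-1}{q-1},e\big)=1$; as $\tfrac{q^{f}-1}{q-1}$ is automatically prime to $p$, only the tame part of $e$ enters. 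Both implications of the biconditional follow.

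\emph{The main obstacle.}
The delicate point is the classification input in the wild case $p\mid e$: there the norm of a single uniformizer no longer pins down $M$ among the totally ramified degree $e$ extensions of $k_w$, so for the sufficiency direction one must show that as $\tau$ varies over the residue disk the \emph{wild} part of the ramification of $k_w[z]/(z^{e}U(z)-\tau)$ is realized in every way compatible with the forced tame part. I expect this to follow from Krasner's lemma for Eisenstein polynomials together with the constancy of the ramification filtration of $\psi$, exploiting precisely that $\tfrac{q^{f}-1}{q-1}$ is coprime to $p$ so that the wild directions impose no constraint on the realized norm classes. The necessity direction, by contrast, needs only the easy inclusion — every realized norm class lies in the proper coset $\overline{u_0}^{-1}\im(\F_q^{\times})$ once $\gcd(\tfrac{q^{f}-1}{q-1},e)>1$ — so that some $L$ is then missed.
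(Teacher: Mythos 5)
Your overall route is the paper's route in completed-local-ring clothing: the presentation of $\OO_{\calC,\overline x}$ over $\OO_{\PP^1,\overline t}$ with a single equation whose reduction is an $e$-th power, a Newton-polygon/norm computation of a uniformizer of the fiber, and the classification of totally tamely ramified extensions by the class of a norm of a uniformizer modulo $e$-th powers (the paper's Proposition~\ref{prop:IsoTotTamelyRam} and Proposition~\ref{prop:RamifiedLocalPresentation}). However, there are genuine gaps. First, the step where good reduction ``forces the Weierstrass polynomial of $\Phi$ to be $(z-r)^e$'' is precisely the nontrivial content of Lemma~\ref{lem:GoodReductionConseq}: a priori the fiber of $\Psi$ over the branch section could decompose in the residue disk as $(z-r_1)^{e_1}(z-r_2)^{e_2}\cdots$ with several ramification sections (or ramification points over other branch points) colliding at $\overline x$. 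Ruling this out needs an argument — the paper uses flatness plus a global Riemann--Hurwitz comparison of the generic and special ramification divisors; locally one can instead use tameness ($\ord_z\overline{\Phi'}=e-1$) together with the \'etaleness of $\calZ\to\Psi(\calZ)\to\Spec\OO_v$ in Definition~\ref{def:GoodReductionOfCurves}. Your parenthetical ``the ramification of $\psi$ is constant along the residue disk'' is not part of the definition of good reduction; it is what must be proved. Second, you overlooked that Definition~\ref{def:GoodReductionOfCurves}\eqref{gr:flatmorphism} includes $\deg(\psi)<\Char(\F_v)$, so $e<p$ and all ramification is tame: your entire ``main obstacle'' paragraph concerns a case that cannot occur, and as submitted your argument is, by your own account, incomplete exactly there. (Also, one cannot in general normalize $\Phi$ to $z^e$ even in the tame case, only to $z^eU(z)$; this is harmless since you keep $u_0$, which is the analogue of the paper's $s(\theta)$.)

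The converse direction has further problems. Your norm computation $N_{L/k_w}(z)^e=\tau^e u_0^{-e}\cdot(1\text{-unit})$ only pins down the class of $N_{L/k_w}(z)$ up to $\mu_{\gcd(e,q^f-1)}$, and that group need \emph{not} lie in $\F_{q^f}^{\times e}$ (e.g.\ $q^f-1=6$, $e=2$), so even for $\val(\tau)=1$ the assertion that the realized classes are exactly the coset $\overline{u_0}^{-1}\im(\F_q^{\times})$ does not follow from what you wrote; one needs the exact constant term, as in the paper's evaluation of $g(0)$ in the proof of Proposition~\ref{prop:RamifiedLocalPresentation}. More seriously, the necessity claim quantifies over \emph{all} $t_v$ in the disk: for $\tau$ with $v:=\val(\tau)>1$ coprime to $e$ the realized class is a twisted power $\bigl(\overline{\tau/\pi^{v}}\,\overline{u_0}^{-1}\bigr)^{m_v}$ ($m_v v\equiv 1 \bmod e$) and in general escapes your coset — the correct description is the ``generators of $H/\OO_K^{\times}\OO_{K'}^{\times e}$'' statement of Theorem~\ref{thm:RamAndInertiaDegrees}\eqref{part:CurveRamIsom} — and for $\gcd(v,e)>1$ you must still rule out that an $(e,f)$-field arises at all (this is where the Beckmann-type equality mentioned in Remarks~\ref{rmks:RamifiedLocalPresentation}, or a tame-inertia argument, is needed); finally, a missed $k_w$-class only gives a missed $k_v$-class after observing that the realized set is $\Gal(k_w/k_v)$-stable, so its complement contains a full Frobenius orbit. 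The intended conclusion is true — when $\gcd\bigl(e,\tfrac{q^f-1}{q-1}\bigr)>1$ the realized classes lie among the generators of the cyclic subgroup generated by $\overline{u_0}$, a proper subset — but your argument as written, ``every realized norm class lies in the proper coset,'' is false and does not establish it. The sufficiency direction does go through once the exact norm and the $(z-r)^e$ normal form are supplied.
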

In the case that \(\gcd\left(e, \frac{q^f - 1}{q-1}\right)\neq 1\), the more general Theorem~\ref{thm:RamAndInertiaDegrees} describes which extensions \(L/K\) with fixed numerical invariants are realized as \(\kk(C_{t})_{\overline{x}}\) for some point \(t\). The description therein shows that the proportion of realizable field extensions is  \(\frac{1}{\gcd\left(e, \frac{q^f - 1}{q-1}\right)}\).\footnote{\noindent This gcd measures the presence of ``new'' \(e^{\infty}\)-th roots of unity in the degree \(f\) unramified extension of \(K\).}
\begin{remark}
    If \(\psi\) is a Galois cover or if \(D = \PP^1\), then prior results of Beckmann, D\`ebes, Ghazi, K\"onig, Legrand, and Neftin essentially combine to give statements (1)--(3) from the theorem~\cites{Beckmann, DG2012, KLN-GrunwaldRamified, DL-TAMS, Legrand}. (Our notion of good reduction is not identical to the conditions that appear in these works but they differ up to a finite set of places.) If both assumptions hold (i.e., \(\psi\) is Galois and \(D= \PP^1\), then~\cite{KLN-GrunwaldRamified} gives a group-theoretic characterization of the extensions \(L/K\) that appear as \(\kk(C_{t})_{\overline{x}}\) for some \(t\in \PP^1(K)\). This group theoretic characterization is in terms of the relationship between the geometric inertia group of a ramification point embedded as a subgroup of \(\Gal(\psi)\) and the arithmetic inertia group \(\Gal(K^{tr}/K^{nr})\). See Section~\ref{sec:IntroComparison} for a comparison of the approaches.
\end{remark}

For morphisms with prescribed ramification behavior, we can apply Theorem~\ref{thm:RamAndInertiaDegrees} to each of the ramification points and assemble a more global description of the residue fields. For example, for general degree \(d\) morphisms (i.e., where the ramification indices are at most \(2\)), we have the following description.

\begin{thm}\label{thm:SdIntro}
    Let \(\psi \colon C \to \PP^1\) be a degree \(d\) morphism of smooth projective geometrically integral curves such that all points of \(C\) have ramification index at most \(2\) and let \(v\) be a nonarchimedean place of \(k\) with sufficiently large residue field.
    \begin{enumerate}
        \item \label{part:InertiaSd} For all partitions \((d_i)_{i=1}^r\vdash d\), there exists a \(t\in \PP^1(k)\) such that \(\kk(C_t)\) is a field with \(r\) places \(w_i|v\), each of inertia degree \(f(w_i/v) = d_i\).
        \item \label{part:RamificationUpperBoundSd} For every \(t\in \PP^1(k)\) outside the branch locus and every place \(w|v\) of \(\kk(C_t)\), the ramification degree \(e(w/v)\) is at most \(2.\) Furthermore, the number of ramified places is bounded above by 
        \(
             \#\{{\overline{x}} \in C_{{\overline{t}}} : e({\overline{x}/\overline{t}}) > 1\} 
        \) where \(\overline{t}\) is the specialization of \(t\).
        \item \label{part:RamificationExistenceSd} If \(\psi\) has a \(k_v\)-rational branch point and there is an odd degree ramification point \(x_v\in C\) lying above it, then for all quadratic ramified extensions \(L/\kk(x_v)\), there exists a \(t\in \PP^1(k)\) such that \(\kk(C_t)\otimes_k k_v\) contains \(L\) as a maximal subfield.
    \end{enumerate}
\end{thm}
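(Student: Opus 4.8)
The plan is to derive all three parts from Theorem~\ref{thm:RamAndInertiaDegrees} (in the specialized form quoted above), applied in suitable residue disks of \(\PP^1\), supplemented by Hilbert's irreducibility theorem and, for part~\eqref{part:InertiaSd}, a function-field Chebotarev input. Throughout I fix a finite flat model \(\Psi\colon\calC\to\PP^1_{\OO_v}\) witnessing the good reduction at \(v\), and I use the hypotheses only through the following consequences, valid once \(\#\F_v\) is large: \(\Psi_{\F_v}\) still has all ramification indices \(\le 2\) (no two ramification points of \(\psi\) collide modulo \(v\), and the residue characteristic is prime to these indices); \(v\) is unramified in the residue field of every ramification point of \(\psi\); and, for part~\eqref{part:InertiaSd}, the geometric (hence arithmetic) monodromy group of \(\psi\) equals \(S_d\) and remains \(S_d\) for \(\Psi_{\F_v}\) (the branching being simple for a general \(\psi\) with ramification indices \(\le 2\), I take this as part of the hypothesis there).

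Part~\eqref{part:RamificationUpperBoundSd} is the formal consequence of the numerical bounds. Fix \(t\in\PP^1(k)\) off the branch locus with reduction \(\bar t\); the places \(w\mid v\) of \(\kk(C_t)\) correspond to the points \(x_v\) of the étale \(k_v\)-scheme \(C_{t_v}\), with \(e(w/v)=e(\kk(x_v)/k_v)\), and each \(x_v\) specializes to some \(\bar x\in\calC_{\bar t}\) with \(e(\kk(x_v)/k_v)\mid e(\bar x/\bar t)\le 2\) by part~(1) of Theorem~\ref{thm:RamAndInertiaDegrees}; hence \(e(w/v)\le 2\). If \(w\) is ramified then \(e(\bar x/\bar t)=2\), so \(\bar x\) is counted on the right-hand side, and no two ramified places specialize to the same \(\bar x\): part~(3) of the theorem, applicable since then \(e(\kk(x_v)/k_v)=2=e(\bar x/\bar t)\), forces \(\kk(C_{t_v})_{\bar x}=\kk(x_v)\), so there is no second point of \(C_{t_v}\) above \(\bar x\). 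Thus the specialization map on ramified places is injective, which gives the count.

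For part~\eqref{part:InertiaSd}, it suffices to find \(\bar t\in\PP^1(\F_v)\) off the branch locus of \(\Psi_{\F_v}\) whose fiber \(\calC_{\bar t}\) consists of closed points of degrees \(d_1,\dots,d_r\). Given such \(\bar t\), any \(t\in\PP^1(k)\) reducing to it has \(e(\bar x/\bar t)=1\) for all \(\bar x\in\calC_{\bar t}\), so part~(3) of Theorem~\ref{thm:RamAndInertiaDegrees} identifies each \(\kk(C_{t_v})_{\bar x}\) with the unramified extension of \(k_v\) of degree \(\deg\bar x\); hence \(\kk(C_t)\otimes_k k_v\) is a product of \(r\) unramified fields of degrees \(d_1,\dots,d_r\), and taking \(t\) moreover with \(C_t\) irreducible---possible by Hilbert irreducibility combined with the local condition at \(v\)---makes \(\kk(C_t)\) a number field with exactly \(r\) places above \(v\), of inertia degrees \(d_1,\dots,d_r\). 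To produce \(\bar t\): since the arithmetic monodromy group of \(\Psi_{\F_v}\) is \(S_d\), the function-field Chebotarev density theorem realizes every conjugacy class of \(S_d\)---equivalently every cycle type, i.e.\ every partition of \(d\)---as the splitting type of the fiber over some \(\bar t\in\PP^1(\F_v)\) once \(\#\F_v\) is large enough in terms of \(\psi\).

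Part~\eqref{part:RamificationExistenceSd} is the crux and the step I expect to need the most care. Let \(b\in\PP^1(k_v)\) be the rational branch point, \(x_v\) the odd-degree ramification point above it, \(n:=[\kk(x_v):k_v]\), and \(\bar b,\bar x\) their reductions. Since \(\#\F_v\) is large, \(\kk(x_v)/k_v\) is unramified of degree \(n\), so \(\deg_{\F_v}\bar x=n\); and since \(\psi\) is ramified of index \(2\) at \(x_v\) with good reduction, \(e(\bar x/\bar b)=2\). The key arithmetic point: as \(n\) is odd, \(\tfrac{q^n-1}{q-1}=1+q+\dots+q^{n-1}\) is odd (a sum of \(n\) terms, \(\equiv 1\pmod 2\) whether \(q\) is even or odd), so \(\gcd\!\left(2,\tfrac{q^n-1}{q-1}\right)=1\). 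Hence the converse part of Theorem~\ref{thm:RamAndInertiaDegrees} applies and says that \emph{every} extension of \(k_v\) with ramification index \(e(\bar x/\bar b)=2\) and inertia degree \(\deg\bar x=n\) occurs as \(\kk(C_{t_v})_{\bar x}\) for some \(t_v\in\PP^1(k_v)\) specializing to \(\bar b\). As \(\kk(x_v)/k_v\) is unramified of degree \(n\), any quadratic ramified \(L/\kk(x_v)\) has precisely these invariants \((e,f)=(2,n)\) over \(k_v\); pick \(t_v\) with \(\kk(C_{t_v})_{\bar x}\cong L\), then pick \(t\in\PP^1(k)\) off the branch locus that is \(v\)-adically close enough to \(t_v\) that the \(k_v\)-isomorphism type of the \(\bar x\)-component \(\kk(C_{t_v})_{\bar x}\) is unchanged (Krasner's lemma, together with \(v\)-adic density of \(\PP^1(k)\)): then \(L\) is a maximal subfield of \(\kk(C_t)\otimes_k k_v\). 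The delicate points are exactly those flagged at the outset---arranging that ``sufficiently large residue field'' forces \(\kk(x_v)/k_v\) to be unramified, so that \(L\) over \(k_v\) has the invariants \((2,n)\) read off from \(\bar x\), and checking the \(\gcd\) hypothesis of Theorem~\ref{thm:RamAndInertiaDegrees}; once these hold, the rest is a routine combination of that theorem with weak approximation.
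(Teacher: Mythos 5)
Your handling of parts~\eqref{part:RamificationUpperBoundSd} and~\eqref{part:RamificationExistenceSd} is essentially the paper's own argument: part~\eqref{part:RamificationUpperBoundSd} is the same formal consequence of Theorem~\ref{thm:RamAndInertiaDegrees}\eqref{it:dim}--\eqref{it:RelPrimeCase} (including the injectivity of specialization on ramified places via the ``\(e(L/k_v)=e(\overline{x}/\overline{t})\) forces \(L=\kk(C_{t_v})_{\overline{x}}\)'' clause), and part~\eqref{part:RamificationExistenceSd} combines the existence clause of Theorem~\ref{thm:RamAndInertiaDegrees} with an approximation step exactly as the paper does, where Krasner plus weak approximation is packaged as Lemma~\ref{lem:HilbertIrred} applied with \(S=\{v\}\); your parity computation showing \(\gcd\bigl(2,\tfrac{q^n-1}{q-1}\bigr)=1\) for odd \(n\) is the content behind the paper's appeal to the coprimality of \(2\) and \(\deg(x_v)\), and the unramifiedness of \(\kk(x_v)/k_v\) is indeed supplied by good reduction (Lemmas~\ref{lem:GoodReduction} and~\ref{lem:GoodReductionConseq}). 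Part~\eqref{part:InertiaSd} also follows the same route as the paper: effective Chebotarev over \(\F_v\), the unramified case of Theorem~\ref{thm:RamAndInertiaDegrees}, and Hilbert irreducibility with a local condition at \(v\), which is precisely Theorem~\ref{thm:ChebotarevMain}.

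The one genuine gap is in part~\eqref{part:InertiaSd}: you take the statement ``the geometric (hence arithmetic) monodromy group is \(S_d\), and remains \(S_d\) for \(\Psi_{\F_v}\)'' as an added hypothesis, whereas the theorem assumes only that all ramification indices are at most \(2\), and this group-theoretic input must be proved. The paper supplies it: the geometric Galois group of the Galois closure is transitive and generated by the inertia elements, which it reads as transpositions, and Lemma~\ref{lem:TransitiveTranspositions} shows a transitive subgroup of \(S_d\) generated by transpositions is all of \(S_d\); the arithmetic group contains the geometric one, so \(G=S_d\) and \(\widehat{C}\) is geometrically integral, and Lemma~\ref{lem:GoodReduction} transfers the group to the special fiber, which is what Theorem~\ref{thm:ChebotarevMain} needs. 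Your hesitation is not baseless --- an inertia generator over a branch point with more than one ramified preimage is a product of several disjoint transpositions, so the reduction to Lemma~\ref{lem:TransitiveTranspositions} implicitly reads the hypothesis as simple branching (the ``general degree \(d\) morphism'' phrasing of the introduction) --- but whichever reading one adopts, a proof of the stated theorem must include this step rather than fold it into the hypotheses, so you should add the transposition-generation argument and the descent of the Galois group to the special fiber instead of assuming them.
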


For any cover \(C\to \PP^1\) whose combinatorial ramification data is fixed, we can obtain a result analogous to Theorem~\ref{thm:SdIntro}. Indeed, standard arguments involving spreading out, Hilbert irreducibility, and weak approximation (which we review in detail in Section~\ref{sec:Global}) reduce the proof of Theorem~\ref{thm:SdIntro} (and results analogous to it) to studying possible isomorphism classes of \'etale \(k_v\)-algebras \(\kk(C_t)\) for \(t\in \PP^1(k_v)\), which we may do using Theorem~\ref{thm:RamAndInertiaDegrees}.

\subsection{Method of proof and comparison to related work}\label{sec:IntroComparison}

We approach this problem as follows. First, we show that local ring extensions with a single source of ramification are always monogenic (Lemma~\ref{lem:MonogenicLocal}) and exploit the monogenicity to determine the isomorphism classes of residue fields in the local ring extension (Proposition~\ref{prop:RamifiedLocalPresentation}). (This proof relies on certain consequences of local class field theory, which are reviewed in Section~\ref{sec:LCFT}.) Our observation that the monogenicity of the local ring extensions together with standard tools of local class field theory and Newton polygons can be leveraged to 
give a comprehensive understanding of local ramified behavior is the key idea in the paper. The simplicity of the underlying ideas means that the method is very flexible and we expect that the method will be extendable to many other contexts, for example to study residue fields hyperplane sections of higher dimensional linear systems on curves. This method also does not require information about the Galois closure. This is especially valuable, since in our main applications of interest the morphisms will typically give rise to \(S_d\) covers, so computation of the Galois closure would be as computationally intensive as possible.

The remainder of the paper focuses on translating these ring theoretic results to our geometric situation of covers of curves. We show that the definition of a morphism of good reduction (Definition~\ref{def:GoodReductionOfCurves}) implies that the resulting extension of local rings has a single source of ramification (Lemma~\ref{lem:GoodReductionConseq}) and thus is a monogenic extension. Applying the ring-theoretic results gives a complete description of the isomorphism classes of \'etale \(k_v\)-algebras \(\kk(C_t)\) for \(t\) in an open neighborhood in \(\PP^1(k_v)\) (Theorem~\ref{thm:RamAndInertiaDegrees}).  
In Section~\ref{sec:Global}, we give the details of how these results are assembled to prove Theorem~\ref{thm:SdIntro}. Lastly, we give results that show how the points of \(\PP^1(k)\) whose fiber has specified local behavior are distributed in sets of bounded height (Section~\ref{sec:ModM}). These results are not necessary to prove Theorem~\ref{thm:RamAndInertiaDegrees} or Theorem~\ref{thm:SdIntro}, but they provide helpful context should the reader plan to use these Theorems~\ref{thm:RamAndInertiaDegrees} and~\ref{thm:SdIntro} in an explicit way.

As indicated above, the main novelty in the paper is in observing that the local ring extensions given by a cover of \emph{arbitrary} curves are monogenic and that this can be exploited to determine the isomorphism classes of the ramified residue field extensions. In other words, the main novelty is in the last part of Theorem~\ref{thm:RamAndInertiaDegreesIntro} (specialization of Theorem~\ref{thm:RamAndInertiaDegrees}\eqref{part:CurveRamIsom}). Indeed, the characterizations of the unramified residue field extensions are simple consequences of the definition of \'etale extensions of \(p\)-adic valuation rings and the Chebotarev density theorem. We include these details in Proposition~\ref{prop:EtAlgUnramified} solely for the reader's convenience. Additionally, the proof of Theorem~\ref{thm:SdIntro} from Theorem~\ref{thm:RamAndInertiaDegrees} relies on the standard techniques of spreading out, Hilbert's irreducibility theorem, weak approximation, and sometimes Chebotarev's density theorem (depending on the situation). Again, we have included the details solely for the convenience of the reader, to make the paper as self-contained as possible.

To the best of our knowledge, there is no direct link from our approach to the techniques used by Beckmann, D\`ebes, Ghazi, K\"onig, Legrand, and Neftin~\cites{Beckmann, DG2012, KLN-GrunwaldRamified, DL-TAMS, Legrand} in their various papers. Their work was motivated by the problems about \(G\)-extensions, particularly the Grunwald problem and the parameterization problem. As such, they were primarily interested in \(G\)-covers of \(\PP^1\) and in understanding the group theoretic criteria that controlled the isomorphism classes of specializations. In contrast, our motivation stems from the arithmetic of closed points on curves. Closed points of large degree (by Riemann-Roch) or of very small degree (by~\cite{VirayVogt}*{Theorem 6.0.1}) arise as complete fibers of morphisms \(\pi\colon C \to D\), but these morphisms are typically not Galois and the base need not be \(\PP^1\). As such, we are most interested in criteria based on the geometry of \(\pi\) and endeavored to use as little information about the Galois closure as possible.

\subsection*{Notation}

    Throughout the paper, we use \(K\) to denote a nonarchimedean local field, \(\OO_K\) its valuation ring, and \(\F_K\) its residue field.  We let \(\pi_K\) denote a uniformizer and write \(q_K := \#\F_K\). When the local field \(K\) is clear from context, we will omit the subscript on \(\OO_K, \F_K, \pi_K,\) and \(q_K\).  We write \(K^{tr}\) for the maximal tamely ramified extension of \(K\) and \(K^{nr}\subset K^{tr}\) for the maximal unramified subextension. Given any positive integer \(d\) and a local field \(K\), we write \(K(d)\) for the unique unramified degree \(d\) extension of \(K\).  Similarly, if \(\F\) is a finite field, we write \(\F({d})\) for the unique degree \({d}\) extension of \(\F\). We will often use that any local \'etale extension of \(\OO_K\) is isomorphic to \(\OO_{K({d})}\) for some \({d}\) and this in turn implies that any (not necessarily local) \'etale extension of \(\OO_K\) is a product \(\prod_{d_i}\OO_{K(d_i)}\) for some integers \(d_i\).

    Since $\calO_K$ is locally compact, it comes equipped with a Haar measure, which we denote by \(\mu_{\OO_K}\). Since \(\PP^1\) can be obtained by gluing together two copies of \(\A^1\) and \(\A^1(\OO_K) \simeq \OO_K\), we can extend \(\mu_{\OO_K}\) to a Haar measure \(\mu_V\) for \(V\) a reduced subscheme of \(\PP^1_{\OO_K}\).

    Given a proper \(\OO_K\) scheme \(\scrX \to \Spec \OO_K\), and a closed point on the generic fiber \(x\in \scrX_K\), the valuative criterion of properness allows us to extend to a map \(\Spec \OO_{\kk(x)}\to \scrX\). In particular, restricting this morphism to the unique closed point of \(\Spec \OO_{\kk(x)}\) gives a closed point \(\overline{x}\in\scrX\). We say this point \(\overline{x}\) is the \defi{specialization} of \(x\) and write \(x\rightsquigarrow \overline{x}\) to indicate that \(x\) specializes to \(\overline{x}\).
    
    We reserve \(k\) for number fields, and denote the set of places of \(k\) by \(\Omega_k\). Given a nonarchimedean \(v\in \Omega_k\), we write \(k_v\) for the completion and \(\OO_v, \F_v, \pi_v\) for its valuation ring, residue field, and a choice of uniformizer, respectively.
    
    Given any finite extension of fields \(L/F\), we write \(N_{L/F}\colon L^{\times} \to F^{\times}\) for the norm map. Given a finite extension of local fields \(L/K\), we write \(e(L/K)\) and \(f(L/K)\) for the ramification and inertia degrees, respectively. In other words,
    \[
        f(L/K) = [\F_L : \F_K]\quad \textup{and}\quad \pi_L^{e(L/K)}\OO_L = \pi_K\OO_L.
    \]

    We also consider the ramification degree \(e(\mm_S/\mm_R)\) of a finite extension of local rings \((R, \mm_R) \hookrightarrow (S, \mm_S)\), which we define to be the ratio \(\dim_{R/\mm_R}(S/\mm_RS)/\dim_{R/\mm_R}(S/\mm_S)\). If \(R\) and \(S\) are discrete valuation rings and their extension of fraction fields is separable, then this agrees with the usual ramification degree which measures the index of value groups.

    For a profinite group \(G\) and an element \(g\in G\), we write \(\langle g \rangle\) for the closed subgroup topologically generated by \(g\). For a finite subgroup \(G\) of \(S_d\), we may write every element \(g\in G\) as a product of disjoint cycles, so we define the \defi{cycle type} of \(g\), denoted \(\ct(g)\), to be the multiset of lengths of the disjoint cycles. The cycle type always gives a partition of \(d\), which we denote \(\ct(g)\vdash d.\)

    Given a smooth proper scheme of finite type \(X\) and a codimension \(1\) subscheme \(Z\subset X\), we write \(D_Z := \sum_{D\subset Z} n_D(Z) D\)
    be the associated effective divisor on \(X\), where \(D\) runs through the irreducible components of \(Z\), and we write \(\kk(Z)\) for the 
    ring
    \[
    \kk(Z) = \kk(D_Z) := \prod_{D} \OO_{X,D}/\mm_D^{n_D(Z)}.
    \]
    Note that if \(Z\) is integral, then \(\kk(Z)\) is the residue field of \(Z\) and if \(Z\) is reduced, then \(\kk(Z)\) is the ring of global sections of the sheaf of total quotient rings.

    Recall that for any field \(F\), there is a bijection between field extensions \(L/F\) of transcendence degree \(1\) and normal projective curves \(C/F\)~\cite{QingLiu}*{Proposition 7.3.13}. Thus, we will abuse notation and use Galois properties of covers of curves to mean the corresponding function field properties. For example, given a separable morphism \(\psi\colon C \to \PP^1\) of normal projective curves, we say that the \(\psi\) is Galois if the corresponding extension of function fields is Galois. Further, the Galois closure of \(\psi\) is a morphism of normal projective curves \(\widehat \psi \colon \widehat C \to \PP^1\) such that the extension of function fields from \(\widehat\psi\) is the Galois closure of the map of function fields given by \(\psi\). We also write \(\Gal(\widehat{C}/\PP^1)\) and \(\Gal(\widehat{C}/C)\) to mean the Galois groups of the function field extensions.

\section*{Acknowledgements}
    This project was started as part of the Women in Numbers 6 (WIN6) conference held at Banff International Research Station. We thank the organizers of WIN6, Shabnam Akhtari, Alina Bucur, Jennifer Park, and Renate Scheidler, and the staff and administration of BIRS for this opportunity.  We thank Andrew Obus for a helpful discussion regarding Definition~\ref{def:GoodReductionOfCurves} and thank Samir Siksek for pointing out the work of Beckmann, which can be used to slightly strengthen Proposition~\ref{prop:RamifiedLocalPresentation} and Theorem~\ref{thm:RamAndInertiaDegrees} (see Remarks~\ref{rmks:RamifiedLocalPresentation}).
    
    Part of the work in this project was supported by National Science Foundation grant DMS-1928930 while the last three authors were in residence at the Simons Laufer Mathematical Sciences Institute in Berkeley, California. YL was partially supported by NSF grant DMS-2200541. BV was supported in part by an AMS Birman Fellowship and NSF grant DMS-2101434, and thanks the University of Washington ADVANCE office for their Write Right Now program during which some of this paper was written.

\section{Structural results on tamely ramified extensions of local fields}\label{sec:LCFT}

The goal of this section is to give a characterization of isomorphism classes of totally tamely ramified extensions in terms of uniformizers in the norm subgroups. This characterization relies on results from local class field theory and Iwasawa's description of the Galois group of the maximal tamely ramified extension.
\begin{prop}\label{prop:IsoTotTamelyRam} 
    Let $K$ be a nonarchimedean local field, let $L_1,L_2$ be totally tamely ramified extensions of $K$, and for \(i=1,2\), let \(K\subset F_i \subset L_i\) be the maximal subextensions that are abelian over \(K\).
    
    If \([L_1:K] = [L_2:K]\), then the following are equivalent:
    \begin{enumerate}
        \item \label{it:Conj} $L_1 = \sigma(L_2)$ for some \(\sigma\in \Gal(\Kbar/K)\). 
        \item \label{it:AbelianEqual} \(F_1 = F_2\). 
        \item\label{it:NormAbelian} \(N_{F_1/K}(F_1^{\times}) = N_{F_2/K}(F_2^{\times})\).
        \item \label{it:NormTTRam} $N_{L_1/K}(L_1^{\times}) = N_{L_2/K}(L_2^{\times})$.
        \item \label{it:UniformizerAbelian}  For any choice of uniformizers \(\pi_i\in N_{F_i/K}(F_i^{\times})\), the ratio \(\pi_1/\pi_2\in \OO_{K}^{\times [F_1:K]}\).
        \item \label{it:UniformizerTTRamModified}  For any choice of uniformizers \(\pi_i\in N_{L_i/K}(L_i^{\times})\), the ratio \(\pi_1/\pi_2\in \OO_{K}^{\times [F_1:K]}\). 
        \item \label{it:UniformizerTTRam}  For any choice of uniformizers \(\pi_i\in N_{L_i/K}(L_i^{\times})\), the ratio \(\pi_1/\pi_2\in \OO_{K}^{\times [L_1:K]}\). 
    \end{enumerate}
\end{prop}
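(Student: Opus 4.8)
The plan is to derive every equivalence from the two tools this section is built on: local class field theory — the inclusion-reversing bijection $F\mapsto N_{F/K}(F^\times)$ between finite abelian extensions of $K$ and finite-index open subgroups of $K^\times$, together with the norm limitation theorem — and Iwasawa's presentation of $\Gal(K^{tr}/K)$. Throughout, set $d:=[L_1:K]=[L_2:K]$, which is prime to $p:=\Char\F_K$ since the $L_i$ are tamely ramified, and $m:=\gcd(d,q-1)$; each $F_i/K$ is automatically totally tamely ramified, and the classification in the third paragraph will show $[F_1:K]=[F_2:K]=m$. Only the equivalence of \eqref{it:Conj} with the other conditions is substantial.

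The computational core of the class-field-theoretic part is the shape of the norm group of a totally tamely ramified \emph{abelian} extension $E/K$ of degree $n$ (necessarily $n\mid q-1$): I claim
\[
    N_{E/K}(E^\times)=K^{\times n}\langle\varpi\rangle\quad\text{for any uniformizer }\varpi\in N_{E/K}(E^\times),
\]
and $N_{E/K}(E^\times)\cap\OO_K^\times=\OO_K^{\times n}$, so the uniformizers of $N_{E/K}(E^\times)$ form a single coset of $\OO_K^{\times n}$. This is a bare index count: $N_{E/K}(E^\times)$ has index $n$ by LCFT, it contains $K^{\times n}$ and a uniformizer $\varpi$ (since $E/K$ is totally ramified), and $K^{\times n}\langle\varpi\rangle$ already has index $[\OO_K^\times:\OO_K^{\times n}]=n$ because $\OO_K^\times\cong\mu_{q-1}\times(1+\mm_K)$ with $1+\mm_K$ pro-$p$. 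Granting this and $[F_1:K]=[F_2:K]$: \eqref{it:AbelianEqual}$\Leftrightarrow$\eqref{it:NormAbelian} is the injectivity of $E\mapsto N_{E/K}(E^\times)$; \eqref{it:NormAbelian}$\Leftrightarrow$\eqref{it:UniformizerAbelian} follows by reading off the displayed description (the uniformizers of $N_{F_1/K}(F_1^\times)$ and of $N_{F_2/K}(F_2^\times)$ agree modulo $\OO_K^{\times[F_1:K]}$ exactly when the two norm groups coincide); the norm limitation theorem gives $N_{L_i/K}(L_i^\times)=N_{F_i/K}(F_i^\times)$, hence \eqref{it:NormAbelian}$\Leftrightarrow$\eqref{it:NormTTRam} and, the uniformizers being the same, \eqref{it:UniformizerAbelian}$\Leftrightarrow$\eqref{it:UniformizerTTRamModified}; and \eqref{it:UniformizerTTRamModified}$\Leftrightarrow$\eqref{it:UniformizerTTRam} because $\OO_K^{\times d}=\OO_K^{\times m}$ — indeed $(1+\mm_K)^d=1+\mm_K$ as $p\nmid d$, while $x\mapsto x^d$ and $x\mapsto x^m$ have the same image on the cyclic group $\mu_{q-1}$.

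For \eqref{it:Conj}, the implication \eqref{it:Conj}$\Rightarrow$\eqref{it:AbelianEqual} is formal: $F_i$ is intrinsic to $L_i$ and $F_i/K$ is Galois, so $L_1=\sigma(L_2)$ forces $F_1=\sigma(F_2)=F_2$. For \eqref{it:AbelianEqual}$\Rightarrow$\eqref{it:Conj} I would invoke Iwasawa's theorem that $G:=\Gal(K^{tr}/K)$ is topologically generated by a Frobenius lift $\sigma$ and a generator $\tau$ of $I:=\Gal(K^{tr}/K^{nr})\cong\prod_{\ell\ne p}\Z_\ell$, with $\sigma\tau\sigma^{-1}=\tau^q$. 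Since $q$ is a unit in $\prod_{\ell\ne p}\Z_\ell$ and $d$ is prime to $p$, the totally tamely ramified degree-$d$ extensions of $K$ are exactly the fixed fields $(K^{tr})^{H_a}$ with $H_a:=\overline{\langle\sigma\tau^a,\tau^d\rangle}$, $a\in\Z/d$, and distinct $a$ give distinct extensions. The relation yields $\tau\sigma\tau^{-1}=\sigma\tau^{q^{-1}-1}$ and $\sigma(\sigma\tau^a)\sigma^{-1}=\sigma\tau^{qa}$, so conjugating $H_a$ by $\tau^n$ replaces $a$ by $a+n(q^{-1}-1)$ and conjugating by $\sigma$ replaces $a$ by $qa$; as $\langle q^{-1}-1\rangle=\langle q-1\rangle=m\,\Z/d\Z$ in $\Z/d\Z$ and scaling by $q$ preserves that coset, $H_a$ and $H_b$ are conjugate in $G$ precisely when $a\equiv b\pmod m$. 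Likewise $\overline{[G,G]}=\overline{\langle\tau^{q-1}\rangle}$, so $F_a$ is the fixed field of $H_a\overline{[G,G]}=\overline{\langle\sigma\tau^a,\tau^m\rangle}$, of index $m$, which depends only on $a\bmod m$; thus $F_a=F_b\Leftrightarrow a\equiv b\pmod m$, and in particular $[F_i:K]=m$. Matching the two criteria gives \eqref{it:AbelianEqual}$\Rightarrow$\eqref{it:Conj}, using that conjugation by any $\sigma\in\Gal(\Kbar/K)$ restricts to conjugation by $\sigma|_{K^{tr}}\in G$ since $K^{tr}/K$ is Galois.

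The step I expect to be the main obstacle is the conjugacy computation inside $G$: one must be careful that $a$ is well-defined only modulo $d$, that $\tau^d$ and $\tau^{q-1}$ generate $\overline{\langle\tau^m\rangle}$ inside $\prod_{\ell\ne p}\Z_\ell$, and — the crux — that the translations $a\mapsto a+n(q^{-1}-1)$ from $\tau^n$-conjugation together with the scalings $a\mapsto q^k a$ from $\sigma$-conjugation generate \emph{precisely} $m\,\Z/d\Z$, so that conjugacy of the $H_a$ and equality of the $F_a$ define the very same relation on $\Z/d$. Everything else is routine manipulation of norm groups and of $\OO_K^\times$.
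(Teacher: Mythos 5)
Your proposal is correct and follows essentially the same route as the paper: local class field theory plus the norm limitation theorem handle the equivalences among \eqref{it:AbelianEqual}--\eqref{it:UniformizerTTRam} (your index-count derivation of \(N_{F_i/K}(F_i^{\times})\cap\OO_K^{\times}=\OO_K^{\times[F_i:K]}\) is just a spelled-out version of the paper's appeal to the reciprocity map), and Iwasawa's presentation of \(\Gal(K^{tr}/K)\) with the conjugacy computation for the metacyclic subgroups gives \eqref{it:AbelianEqual}\(\Rightarrow\)\eqref{it:Conj} and \([F_i:K]=\gcd([L_i:K],q_K-1)\), exactly as in Lemma~\ref{lem:ProfiniteMetacyclic} and Corollary~\ref{cor:GaloisTotallyTamelyRamified}. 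Your only cosmetic deviations are the parametrization \(\overline{\langle\sigma\tau^a,\tau^d\rangle}\) in place of \(\langle\tau^e,\tau^i\sigma\rangle\) and identifying \(F_i\) via \(\overline{[G,G]}=\overline{\langle\tau^{q-1}\rangle}\) rather than via the normality criterion, which amount to the same computation.
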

\begin{proof} We are mainly interested in the equivalence of~\eqref{it:Conj} and~\eqref{it:UniformizerTTRam}. However, the tools which allow us to relate these two statements naturally give us the equivalent statements~\eqref{it:AbelianEqual}--\eqref{it:UniformizerTTRamModified}, as we now explain.
The equivalence of~\eqref{it:AbelianEqual} and~\eqref{it:NormAbelian} is a special case of the Main Theorem of Local Class Field Theory (abelian extensions are characterized by their norm groups); see, e.g., \cite{Neukirch}*{Chapter~V, Theorem~1.4}. Furthermore, using the local reciprocity map, we can deduce $N_{F_i/K}(F_i^{\times})\cap \calO_K^{\times}=\calO_K^{\times [F_i:K]}$ which yields the equivalence of~\eqref{it:NormAbelian} and~\eqref{it:UniformizerAbelian}. The Norm Limitation Theorem~\cite{Grant-Leitzel}*{Theorem A} implies that \(N_{F_i/K}(F_i^{\times}) = N_{L_i/K}(L_i^{\times})\) which yields the equivalences~\eqref{it:NormAbelian}\(\Leftrightarrow\)\eqref{it:NormTTRam} and~\eqref{it:UniformizerAbelian}\(\Leftrightarrow\)\eqref{it:UniformizerTTRamModified}. Therefore, we have ascertained all equivalences among~\eqref{it:AbelianEqual}--\eqref{it:UniformizerTTRamModified}. Additionally, the implication~\eqref{it:Conj}\(\Rightarrow\)\eqref{it:AbelianEqual} follows from the Galois theory correspondence. It remains to prove ~\eqref{it:AbelianEqual}\(\Rightarrow\)\eqref{it:Conj} and~\eqref{it:UniformizerTTRamModified}\(\Leftrightarrow\)\eqref{it:UniformizerTTRam}.

The implication~\eqref{it:AbelianEqual}\(\Rightarrow\)\eqref{it:Conj} is proved in Corollary~\ref{cor:GaloisTotallyTamelyRamified}\eqref{part:conj}, which uses Iwasawa's presentation of \(\Gal(K^{tr}/K)\). Finally, Iwasawa's presentation of the Galois group also implies that \([F_i:K] = \gcd([L_i:K], q_K-1)\) (Corollary~\ref{cor:GaloisTotallyTamelyRamified}\eqref{part:index}) so \(\OO_K^{\times [L_1:K]} = \OO_K^{\times \gcd([L_1:K], q_K-1)} = \OO_K^{\times [F_1:K]}\) and this gives the equivalence of \eqref{it:UniformizerTTRamModified} and~\eqref{it:UniformizerTTRam}.
\end{proof}

\subsection{Consequences of Iwasawa's presentation of \(\Gal(K^{tr}/K)\)}\label{sec:Iwasawa}
In 1955, Iwasawa gave the following metacyclic presentation of the Galois group of the maximal tamely ramified extension of \(K\).
\begin{thm}[\cite{Iwasawa55}]\label{thm:Iwasawa} Let \(K\) be a nonarchimedean local field.
The Galois group \(\Gal(K^{tr}/K^{nr})\) is isomorphic to {\(\prod_{p\nmid q_K} \Z_p\)} and the quotient \(\Gal(K^{nr}/K)\simeq \Gal(K^{tr}/K)/\Gal(K^{tr}/K^{nr})\) is isomorphic to \(\widehat\Z\). Moreover, there is a generator \(\tau\) of \(\Gal(K^{tr}/K^{nr})\) and an element \(\sigma\in \Gal(K^{tr}/K)\) whose image generates the quotient \(\Gal(K^{nr}/K)\) such that 
    \[
        \Gal(K^{tr}/K) = \langle \tau, \sigma \,|\, \sigma\tau\sigma^{-1} = \tau^{q_K}\rangle.
    \]
    \end{thm}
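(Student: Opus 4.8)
The plan is to realize $K^{tr}$ as an explicit tower over $K^{nr}$, make coherent choices of a uniformizer together with its roots and the roots of unity, and then read off both the structure of $\Gal(K^{tr}/K^{nr})$ and the conjugation relation by a direct computation.

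First I would dispose of the unramified part: $K^{nr}/K$ is Galois with $\Gal(K^{nr}/K)$ canonically identified with $\Gal(\overline{\F_K}/\F_K)\cong\widehat\Z$, topologically generated by the arithmetic Frobenius, which reduces to $x\mapsto x^{q_K}$ on residue fields; fix a lift $\sigma\in\Gal(K^{tr}/K)$ of it. Write $p=\Char\F_K$, so that $q_K$ is a power of $p$ and the primes not dividing $q_K$ are exactly the primes $\neq p$. Next I would describe $K^{tr}$ explicitly. A finite extension of $K$ is tamely ramified if and only if its compositum with $K^{nr}$ is tamely ramified over $K^{nr}$; and over $K^{nr}$ — whose residue field $\overline{\F_K}$ is algebraically closed, so that by Hensel's lemma every unit of $\OO_{K^{nr}}$ is an $n$-th power for every $n$ prime to $p$ — every totally tamely ramified extension of degree $n$ equals $K^{nr}(\pi_K^{1/n})$. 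Hence $K^{tr}=\bigcup_{p\nmid n}K^{nr}(\pi_K^{1/n})$. Since $\mu_n\subset K^{nr}$ whenever $p\nmid n$, each layer $K^{nr}(\pi_K^{1/n})/K^{nr}$ is a Kummer extension with Galois group $\cong\mu_n$ via $g\mapsto g(\pi_K^{1/n})/\pi_K^{1/n}$. Choosing the roots $\pi_K^{1/n}$ and primitive roots of unity $\zeta_n$ compatibly and passing to the limit over the cofinal system of $n$ prime to $p$ yields $\Gal(K^{tr}/K^{nr})\cong\varprojlim_{p\nmid n}\mu_n(\overline{\F_K})\cong\prod_{\ell\ne p}\Z_\ell=\prod_{p'\nmid q_K}\Z_{p'}$, and pins down a topological generator $\tau$, namely the one acting on each layer by $\pi_K^{1/n}\mapsto\zeta_n\pi_K^{1/n}$.

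The commutation relation is then a one-line computation on each layer. The element $\sigma$ fixes $\pi_K\in K$, acts on $\mu_n\subset K^{nr}$ by $\zeta\mapsto\zeta^{q_K}$, and satisfies $\sigma(\pi_K^{1/n})=\xi\,\pi_K^{1/n}$ for some $\xi\in\mu_n$; since $\tau$ fixes $\mu_n$, chasing $\pi_K^{1/n}$ through $\sigma\tau\sigma^{-1}$ makes the factor $\xi$ cancel and leaves $\sigma\tau\sigma^{-1}(\pi_K^{1/n})=\zeta_n^{\,q_K}\pi_K^{1/n}=\tau^{q_K}(\pi_K^{1/n})$. As this holds on every layer it holds on $K^{tr}$, giving $\sigma\tau\sigma^{-1}=\tau^{q_K}$; moreover this is independent of the chosen lift $\sigma$, since two lifts differ by an element of the abelian group $\Gal(K^{tr}/K^{nr})=\overline{\langle\tau\rangle}$, which commutes with $\tau$. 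Finally, feeding this back into the exact sequence $1\to\overline{\langle\tau\rangle}\to\Gal(K^{tr}/K)\to\Gal(K^{nr}/K)\to1$ — in which $\overline{\langle\tau\rangle}\cong\prod_{\ell\ne p}\Z_\ell$ is normal, the quotient is $\widehat\Z$ topologically generated by the image of $\sigma$, and conjugation by that generator is multiplication by $q_K$ — identifies $\Gal(K^{tr}/K)$ with the profinite group presented by $\langle\tau,\sigma\mid\sigma\tau\sigma^{-1}=\tau^{q_K}\rangle$ (concretely, the profinite completion of the Baumslag--Solitar group $\mathrm{BS}(1,q_K)\cong\Z[1/q_K]\rtimes\Z$).

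I expect the real work to be bookkeeping rather than ideas: one has to fix a uniformizer $\pi_K$, a compatible system of roots $(\pi_K^{1/n})_{p\nmid n}$, and a compatible system of primitive roots of unity $(\zeta_n)_{p\nmid n}$ once and for all, so that a single pair $(\tau,\sigma)$ witnesses the relation exactly rather than up to a twist, and one must take all the inverse limits over the cofinal set of integers prime to $p$. The only external inputs are that every unit of $K^{nr}$ is an $n$-th power (Hensel over an algebraically closed residue field), the standard structure theory of tamely ramified extensions, and the uniqueness of the profinite extension realizing the prescribed action; all are routine.
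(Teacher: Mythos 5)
The paper does not prove this theorem at all---it is quoted directly from Iwasawa's 1955 paper---so there is no internal argument to compare with; what you give is the standard Kummer-theoretic proof, and most of it is correct. The identification $K^{tr}=\bigcup_{p\nmid n}K^{nr}(\pi_K^{1/n})$ (using that $K^{nr}$ is henselian with algebraically closed residue field, so every unit is an $n$-th power for $p\nmid n$), the isomorphism $\Gal(K^{tr}/K^{nr})\cong\varprojlim_{p\nmid n}\mu_n\cong\prod_{\ell\neq p}\Z_\ell$ via compatible Kummer pairings, and the computation $\sigma\tau\sigma^{-1}=\tau^{q_K}$ (the unknown root of unity $\xi$ with $\sigma(\pi_K^{1/n})=\xi\,\pi_K^{1/n}$ does cancel, and both sides fix $K^{nr}$) are all sound, and this is essentially Iwasawa's own route.

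The step you should tighten is the last one. ``Uniqueness of the profinite extension realizing the prescribed action'' is not a valid principle: extensions of $\widehat\Z$ by $\prod_{\ell\neq p}\Z_\ell$ with a fixed action are classified by a degree-two cohomology group which you have not computed, so knowing the kernel, the quotient, the action, and that $\tau,\sigma$ generate topologically and satisfy the relation does not by itself show there are no further relations, i.e.\ that the canonical surjection from the profinite group presented by $\langle\tau,\sigma\mid\sigma\tau\sigma^{-1}=\tau^{q_K}\rangle$ onto $\Gal(K^{tr}/K)$ is injective. Two standard repairs: (i) exhibit a splitting on the Galois side by choosing $\sigma$ to fix the entire compatible system $(\pi_K^{1/n})_{p\nmid n}$---possible because $F:=\bigcup_{p\nmid n}K(\pi_K^{1/n})$ is a union of totally ramified extensions, so $F\cap K^{nr}=K$ and $\Gal(K^{tr}/F)\xrightarrow{\sim}\Gal(K^{nr}/K)$---so that $\Gal(K^{tr}/K)$ is the semidirect product $\prod_{\ell\neq p}\Z_\ell\rtimes\widehat\Z$ with the $q_K$-power action, and then check that the abstractly presented profinite group is this same semidirect product (in any finite quotient, conjugation by $\sigma$ restricts to an automorphism of the image of $\langle\tau\rangle$ sending the generator to its $q_K$-th power, forcing that image to have order prime to $q_K$); or (ii) argue directly that the surjection from the presented group is an isomorphism by noting its $\langle\tau\rangle$-part is procyclic of pro-order prime to $p$ mapping onto $\prod_{\ell\neq p}\Z_\ell$, its quotient maps onto $\widehat\Z$, and topologically finitely generated profinite groups are Hopfian. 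Either fix is short, but the assertion that the displayed presentation is exact genuinely requires it.
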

From this theorem we can deduce information about Galois groups of subextensions.
\begin{cor}\label{cor:Iwasawa}
    Retain the notation from Theorem~\ref{thm:Iwasawa}.
    If \(L/K\) is a tamely ramified extension and \(H:= \Gal(K^{tr}/L)\), we have
    \[
        H\cap \langle \tau\rangle = \langle \tau^{e(L/K)}\rangle
        \quad \textup{and} \quad
        \left|\Gal(K^{tr}/K)/\langle\tau\rangle H\right| = f(L/K).
    \]
    If \(L/K\) is totally tamely ramified, then \(H = \langle \tau^{e(L/K)}, \tau^i\sigma\rangle\) for some \(0 \leq i < e(L/K)\).
\end{cor}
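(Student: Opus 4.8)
The plan is to carry out all three assertions inside $G := \Gal(K^{tr}/K)$, using Theorem~\ref{thm:Iwasawa} to identify the inertia subgroup $I := \langle\tau\rangle = \Gal(K^{tr}/K^{nr})$ as a closed normal subgroup isomorphic to the procyclic group $\prod_{p\nmid q_K}\Z_p$, with $G/I\cong\widehat\Z$ topologically generated by the image $\bar\sigma$ of $\sigma$. Fix a tamely ramified $L/K$, set $H := \Gal(K^{tr}/L)$, and let $L_0 := L\cap K^{nr}$ be the maximal unramified subextension, so that $[L_0:K] = f(L/K)$, $[L:L_0] = e(L/K)$, and $p\nmid e(L/K)$ since $L/K$ is tame.

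First I would dispatch the two index statements by infinite Galois theory. Since $I$ is closed and normal, $HI$ is again a closed subgroup, and the Galois correspondence gives $HI = \Gal(K^{tr}/L_0)$ and $H\cap I = \Gal(K^{tr}/LK^{nr})$. Hence $[G:HI] = [L_0:K] = f(L/K)$, which is the second assertion. For the first, $H\cap I$ is a closed subgroup of $I$ of index $[LK^{nr}:K^{nr}]$, and since $K^{nr}/K$ is Galois this index equals $[L:L\cap K^{nr}] = e(L/K)$ by the standard compositum formula. As $I\cong\prod_{p\nmid q_K}\Z_p$ is procyclic, it has a \emph{unique} closed subgroup of each finite index prime to $p$; since $\langle\tau^{e(L/K)}\rangle$ is such a subgroup, $H\cap I = \langle\tau^{e(L/K)}\rangle$.

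For the third assertion, assume $L/K$ is totally tamely ramified, so $f(L/K) = 1$ and therefore $HI = G$ by the previous paragraph; equivalently $H$ surjects onto $G/I$. Choose $h\in H$ mapping to $\bar\sigma$; then $h\sigma^{-1}\in I$, so $h = \tau^a\sigma$ for some $a\in\widehat\Z$. Writing $e := e(L/K) = [L:K]$, I claim $H = \langle\tau^e, \tau^a\sigma\rangle$: the inclusion $\supseteq$ holds because $\tau^e$ generates $H\cap I\subseteq H$ and $h\in H$ with $H$ closed, while for $\subseteq$, any $h'\in H$ has image $\bar\sigma^{\,n}$ in $G/I$ for some $n\in\widehat\Z$, and then $h' h^{-n}$ (with $h^{-n}$ taken in the closed subgroup $\overline{\langle h\rangle}\subseteq H$) lies in $H\cap I = \langle\tau^e\rangle$, so $h'\in\langle\tau^e\rangle\cdot\overline{\langle h\rangle}\subseteq\langle\tau^e,\tau^a\sigma\rangle$. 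Finally, $\langle\tau\rangle/\langle\tau^e\rangle\cong\Z/e\Z$ by the uniqueness above, so there is a unique $i\in\{0,\dots,e-1\}$ with $\tau^{a-i}\in\langle\tau^e\rangle$; since $\tau^{a-i}\in\langle\tau^e\rangle$, the elements $\tau^a\sigma$ and $\tau^i\sigma$ generate the same closed subgroup when adjoined to $\tau^e$, and we conclude $H = \langle\tau^e,\tau^i\sigma\rangle$.

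The only step needing real care is the claim in the last paragraph that $H$ is topologically generated by $\tau^e$ together with a single lift of $\bar\sigma$: one has to work with topological closures, use that $H$ is closed, and exploit that $G/I\cong\widehat\Z$ so that every element of $H$ is congruent mod $H\cap I$ to a ``$\widehat\Z$-power'' of $h$. The supporting fact that a procyclic group has a unique closed subgroup of each finite index prime to $p$ — which both pins down $\langle\tau^e\rangle$ and identifies $\langle\tau\rangle/\langle\tau^e\rangle$ with $\Z/e\Z$ — is where tameness enters, and the identity $[LK^{nr}:K^{nr}] = [L:L\cap K^{nr}]$ is the standard compositum formula for the Galois extension $K^{nr}/K$.
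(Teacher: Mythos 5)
Your proof is correct and follows essentially the same route as the paper: the two index statements come from the Galois correspondence applied to \(H\cap\langle\tau\rangle\) and \(\langle\tau\rangle H\) together with Iwasawa's presentation, and in the totally ramified case one lifts \(\bar\sigma\) into \(H\). You simply supply the details the paper's two-line argument leaves implicit, notably the uniqueness of the index-\(e\) closed subgroup of the procyclic inertia group and the verification that \(H\) actually equals \(\langle\tau^{e(L/K)},\tau^{i}\sigma\rangle\) rather than merely containing \(\tau^{i}\sigma\).
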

\begin{proof}
    By definition, \(e(L/K) = [L : L\cap K^{nr}]\) and \(f(L/K) = [L\cap K^{nr}:K]\). Thus, the first statement follows from Theorem~\ref{thm:Iwasawa} and the Galois correspondence. Further, if \(L/K\) is totally tamely ramified, then \(\Gal(K^{tr}/K)=\langle\tau\rangle H\) so \(H\) must contain \(\tau^i\sigma\) for some \(i\). 
\end{proof}

Now that we have presentations for the subgroups fixing totally tamely ramified extensions, determining the Galois action on totally tamely ramified extensions of \(K\) is a purely group-theoretic exercise.

\begin{lemma}\label{lem:ProfiniteMetacyclic}
    Let \(q\) be a prime power and let \(\calG\) be the profinite group generated by two elements \(\sigma, \tau\) with the only relation   
    \begin{equation}\label{eq:Iwasawa}
    \sigma \tau \sigma^{-1} = \tau^q.
    \end{equation}
    Let \(e\) be a positive integer that is coprime to \(q\).
    For integers \(0\leq i < e\), define \(H_i := \langle \tau^e, \tau^i\sigma\rangle\).  Then \(H_i\) is conjugate to \(H_j\) if and only if \(i \equiv j \pmod{\gcd(e, q-1)}\).
\end{lemma}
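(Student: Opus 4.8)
The plan is to work with the concrete metacyclic structure of $\calG$ rather than arguing abstractly from the presentation. I would first invoke Theorem~\ref{thm:Iwasawa} to write $\calG=\langle\tau\rangle\rtimes\langle\sigma\rangle$ as a profinite semidirect product: $\langle\tau\rangle\cong\prod_{p\nmid q}\Z_p$ is a normal closed subgroup, $\langle\sigma\rangle\cong\widehat\Z$ maps isomorphically onto $\calG/\langle\tau\rangle$, and conjugation by $\sigma$ induces the map $\tau\mapsto\tau^q$, which is a genuine automorphism of $\langle\tau\rangle$ precisely because $q$ is a unit in each $\Z_p$ with $p\nmid q$. Two consequences will be used throughout: every element of $\calG$ has a unique normal form $\tau^a\sigma^b$ with $a,b\in\widehat\Z$; and since $\gcd(e,q)=1$, the quotient $\langle\tau\rangle/\langle\tau^e\rangle$ is cyclic of order $e$, with $\Z$ surjecting onto it.

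Next I would prove a normalization lemma: for any $m\in\widehat\Z$, one has $\langle\tau^e,\tau^m\sigma\rangle=H_j$, where $j\in\{0,\dots,e-1\}$ represents $m \bmod e$. Indeed, writing $m-j=ec$ with $c\in\widehat\Z$ gives $\tau^{m-j}=(\tau^e)^c\in\langle\tau^e\rangle$, so $\tau^m\sigma$ and $\tau^j\sigma$ differ by an element of $\langle\tau^e\rangle$ and the two generating sets are interchangeable. Then a direct computation from $\sigma^b\tau\sigma^{-b}=\tau^{q^b}$ (iterating the defining relation) gives, for $g=\tau^a\sigma^b$,
\[
g(\tau^i\sigma)g^{-1}=\tau^{\,iq^b-a(q-1)}\sigma,\qquad g\tau^eg^{-1}=\tau^{\,eq^b},
\]
and since multiplication by $q^b$ is an automorphism of $\langle\tau\rangle$ stabilizing the characteristic subgroup $\langle\tau^e\rangle$, the second identity yields $g\langle\tau^e\rangle g^{-1}=\langle\tau^e\rangle$. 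Combining these with the normalization lemma,
\[
gH_ig^{-1}=\bigl\langle\tau^e,\ \tau^{\,iq^b-a(q-1)}\sigma\bigr\rangle=H_j,\qquad j\equiv iq^b-a(q-1)\pmod e,
\]
so the conjugates of $H_i$ are exactly the $H_j$ with $j\equiv iq^b-a(q-1)\pmod e$ for some $a,b\in\widehat\Z$.

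It then remains to identify this set of residues modulo $e$. Set $d_0:=\gcd(e,q-1)$. As $a$ ranges over $\widehat\Z$, $a(q-1)\bmod e$ runs over all multiples of $d_0$ in $\Z/e\Z$; as $b$ ranges over $\widehat\Z$, $q^b\bmod e$ runs over the finite cyclic subgroup $\langle q\rangle\le(\Z/e\Z)^\times$. The elementary point is that $d_0\mid q-1$ forces $q\equiv 1\pmod{d_0}$, hence $q^b\equiv 1\pmod{d_0}$ for all $b$; therefore $iq^b-a(q-1)\equiv i\pmod{d_0}$ for every choice of $a,b$, which shows $H_i\sim H_j$ implies $i\equiv j\pmod{d_0}$. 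Conversely, if $i\equiv j\pmod{d_0}$, take $b=0$: the congruence $a(q-1)\equiv i-j\pmod e$ is solvable because $d_0\mid i-j$, and any integer solution $a$ gives $\tau^aH_i\tau^{-a}=H_j$. This establishes $H_i\sim H_j\iff i\equiv j\pmod{\gcd(e,q-1)}$.

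The routine-but-delicate part of this argument is the profinite bookkeeping — checking the normal form $\tau^a\sigma^b$, that $\langle\tau^e\rangle$ plays the role of "the index-$e$ subgroup" of $\langle\tau\rangle$, and the normalization lemma — rather than any deep step; the conceptual heart is the one-line observation $q\equiv 1\pmod{\gcd(e,q-1)}$, which is exactly what makes $i\bmod\gcd(e,q-1)$ a conjugacy invariant. I do not anticipate a serious obstacle beyond being careful with these identifications.
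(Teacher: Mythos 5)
Your argument is correct, and its arithmetic core is the same as the paper's (multiples of \(q-1\) in \(\Z/e\Z\) are exactly the multiples of \(\gcd(e,q-1)\), and \(q\equiv 1\pmod{\gcd(e,q-1)}\)), but the mechanism is genuinely different. The paper works straight from the presentation: it computes \(\tau^{-1}H_i\tau=H_{(i+q-1)\bmod e}\) and \(\sigma H_i\sigma^{-1}=H_{qi\bmod e}\) and then reduces the ``only if'' direction to conjugation by the two topological generators; it never invokes the semidirect-product structure. You instead import from Theorem~\ref{thm:Iwasawa} the identification \(\calG\cong\bigl(\prod_{p\nmid q}\Z_p\bigr)\rtimes\widehat\Z\) with unique normal form \(\tau^a\sigma^b\), and compute \(gH_ig^{-1}=\langle\tau^e,\tau^{\,iq^b-a(q-1)}\sigma\rangle\) for an arbitrary conjugator in one stroke. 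What your route buys: it treats a general \(g\) directly, so you never need the reduction-to-generators step, which for a profinite group requires a small word-plus-closure argument that the paper leaves implicit. What it costs: you must know that the abstractly presented \(\calG\) really is that semidirect product (faithfulness of Iwasawa's presentation; this is fine, since every prime power \(q\) equals \(q_K\) for some local field and Theorem~\ref{thm:Iwasawa} asserts exactly this equality), and you should note that exponents of \(\tau\) live in \(\prod_{p\nmid q}\Z_p\), where \(q\) is a unit and reduction modulo \(e\) is surjective with kernel \(e\prod_{p\nmid q}\Z_p\) --- which is what your normalization lemma uses. One point you share with the paper and could make explicit: concluding from \(gH_ig^{-1}=H_j\) that \(j\equiv iq^b-a(q-1)\pmod e\) uses that the subgroups \(H_0,\dots,H_{e-1}\) are pairwise distinct (equivalently \(H_j\cap\langle\tau\rangle=\langle\tau^e\rangle\)); in your setup this is immediate from the normal form, since the elements of \(H_j\) with trivial \(\sigma\)-part are exactly the powers of \(\tau^e\), so it is worth one sentence rather than silence.
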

\begin{remark}
    Note that by~\eqref{eq:Iwasawa}, \(\langle \tau^i\rangle\) is a normal subgroup of \(\calG\) for all \(i\), and thus that \(\langle \tau^{qi}\rangle = \langle \tau^i\rangle\) for all \(i\). 
\end{remark}
\begin{proof}
    For a fixed \(i\), we compute that \(\tau^{-1}\tau^{i}\sigma\tau = \tau^{i + q-1}\sigma\). Thus, \(H_i\) is conjugate to \(H_{i + (q-1)\bmod e}\). By iterating, we deduce that \(H_i\) is conjugate to \(H_j\) if \(i \equiv j \pmod{\gcd(e, q-1)}\). Since \(\calG\) is generated by \(\sigma\) and \(\tau\), it suffices to show that if \(\sigma H_i\sigma^{-1} = H_j\) then \(i\equiv j \pmod{\gcd(e, q-1)}\). We compute 
    \[
    \sigma H_i\sigma^{-1} = \langle\sigma\tau^e\sigma^{-1}, \sigma\tau^i\sigma \sigma^{-1}\rangle = \langle \tau^{qe}, \tau^{qi}\sigma\rangle = \langle \tau^{e}, \tau^{qi}\sigma\rangle = H_{qi\bmod e}.
    \]
    This completes the proof, since \(qi = i + (q-1)i\equiv i \pmod{q-1}\).
\end{proof}

\begin{cor}\label{cor:GaloisTotallyTamelyRamified}
    Let \(K\) be a nonarchimedean local field, let \(L/K\) be a totally tamely ramified degree \(e\) extension, let \(K\subset F\subset L\) be the maximal Galois subextension contained in \(L\). Then:
    \begin{enumerate}
        \item \(\Gal(F/K)\) is cyclic,
        \item \( [F:K] = \gcd\left(e, q_K - 1\right),\) and\label{part:index}
        \item if \(L'/K\) is a totally tamely ramified degree \(e\) extension that contains \(F\), then \(L' = g(L)\) for some \(g\in \Gal(K^{tr}/K)\). \label{part:conj}
        \end{enumerate}
\end{cor}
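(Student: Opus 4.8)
The plan is to translate everything into the metacyclic group $G := \Gal(K^{tr}/K) = \langle \tau, \sigma \mid \sigma\tau\sigma^{-1} = \tau^{q_K}\rangle$ of Theorem~\ref{thm:Iwasawa} and argue with the subgroups $H_i = \langle \tau^e, \tau^i\sigma\rangle$ studied in Lemma~\ref{lem:ProfiniteMetacyclic}. Since $L/K$ is totally tamely ramified of degree $e$, the integer $e$ is coprime to $q_K$, and Corollary~\ref{cor:Iwasawa} gives $\Gal(K^{tr}/L) = H_i$ for some $0\le i< e$; thus Lemma~\ref{lem:ProfiniteMetacyclic} applies with this $e$. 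Moreover $F/K$, being a subextension of the totally tamely ramified extension $L/K$, is itself totally tamely ramified, and by the Galois correspondence for the Galois extension $K^{tr}/K$, the subgroup $\Gal(K^{tr}/F)$ is the normal closure $N$ of $\Gal(K^{tr}/L) = H_i$ in $G$ (the maximal Galois subextension of $L/K$ corresponds to the smallest normal subgroup of $G$ containing $\Gal(K^{tr}/L)$).

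The key computation is that $N = \langle \tau^{d}, \tau^i\sigma\rangle$, where $d := \gcd(e, q_K-1)$. For the inclusion $N\supseteq \langle\tau^d,\tau^i\sigma\rangle$: by Lemma~\ref{lem:ProfiniteMetacyclic} the subgroup $\langle\tau^e, \tau^{i+d}\sigma\rangle$ (reading $i+d$ modulo $e$ if needed) is conjugate in $G$ to $H_i$, since $i+d\equiv i \pmod{\gcd(e,q_K-1)}$, hence lies in $N$; in particular $\tau^{i+d}\sigma\in N$, and multiplying by $(\tau^i\sigma)^{-1}$ yields $\tau^{d}\in N$. For the reverse inclusion: $d\mid e$ gives $H_i \subseteq \langle\tau^d,\tau^i\sigma\rangle$, and a direct check using $\sigma\tau\sigma^{-1}=\tau^{q_K}$ together with $d\mid q_K-1$ shows $\langle\tau^d,\tau^i\sigma\rangle$ is normal in $G$ (conjugation by $\tau^{\pm1}$ and $\sigma^{\pm1}$ moves $\tau^i\sigma$ by a power of $\tau$ divisible by $d$, and sends $\tau^d$ into $\langle\tau^d\rangle$). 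The profinite bookkeeping here — for instance that $\langle\tau^{q_K d}\rangle=\langle\tau^d\rangle$ because $q_K$ acts invertibly on $\Gal(K^{tr}/K^{nr})$ — is the only subtle point, and the Remark following Lemma~\ref{lem:ProfiniteMetacyclic} already records the instances needed.

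Granting this description of $N$, all three claims follow quickly. Conjugating $\tau^d$ by $\tau^i\sigma$ produces $\tau^{q_K d}$, so $N\cap\langle\tau\rangle = \langle\tau^d\rangle$, and $N$ visibly surjects onto $G/\langle\tau\rangle$; applying the two formulas of Corollary~\ref{cor:Iwasawa} to $F$ then gives $e(F/K)=d$ and $f(F/K)=1$, hence $[F:K] = \gcd(e,q_K-1)$, which is~\eqref{part:index}. For cyclicity: in $\Gal(F/K)=G/N$ the relation $\tau^i\sigma\in N$ forces the image of $\sigma$ to equal the image of $\tau^{-i}$, so $G/N$ is topologically generated by the image of $\tau$ alone; being procyclic and finite, it is cyclic. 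For~\eqref{part:conj}: writing $\Gal(K^{tr}/L') = H_{i'}$ via Corollary~\ref{cor:Iwasawa} with $0\le i'<e$, the inclusion $F\subseteq L'$ forces $H_{i'}\subseteq N$, so $\tau^{i'-i} = (\tau^{i'}\sigma)(\tau^i\sigma)^{-1}\in N\cap\langle\tau\rangle = \langle\tau^d\rangle$, whence $i'\equiv i\pmod d$; Lemma~\ref{lem:ProfiniteMetacyclic} then furnishes $g\in G$ with $gH_ig^{-1}=H_{i'}$, and such a $g\in\Gal(K^{tr}/K)$ satisfies $g(L)=L'$.

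The main obstacle is the identification of the normal closure $N$ in the second paragraph, and in particular keeping the profinite arithmetic of $\langle\tau\rangle \cong \Gal(K^{tr}/K^{nr})$ straight; once $N = \langle\tau^d,\tau^i\sigma\rangle$ is established, parts~(1)--(3) are essentially formal consequences of Corollary~\ref{cor:Iwasawa} and Lemma~\ref{lem:ProfiniteMetacyclic}.
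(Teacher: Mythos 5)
Your argument is correct, and it runs on the same toolkit as the paper's proof (Iwasawa's presentation in Theorem~\ref{thm:Iwasawa}, Corollary~\ref{cor:Iwasawa}, and Lemma~\ref{lem:ProfiniteMetacyclic}), but it is organized around a different pivot: you compute \(\Gal(K^{tr}/F)\) explicitly as the normal closure \(N=\langle\tau^{d},\tau^{i}\sigma\rangle\) with \(d=\gcd(e,q_K-1)\), and read off all three parts from that single identity. The paper never computes \(N\): it gets cyclicity by exhibiting \(\Gal(F/K)\) as a quotient of \(\Gal(K^{tr}/K^{nr})\), deduces part~\eqref{part:index} from the normality criterion coming out of Lemma~\ref{lem:ProfiniteMetacyclic} applied to subextensions of \(L\), and proves part~\eqref{part:conj} by replacing \(K\) with \(F\) so as to reduce to the case \(\gcd(e,q_K-1)=1\). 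Your congruence argument for part~\eqref{part:conj} (namely \(H_{i'}\subseteq N\) forces \(i'\equiv i\pmod d\), then Lemma~\ref{lem:ProfiniteMetacyclic} gives the conjugating element) buys something real here: it bypasses the paper's reduction step, whose intermediate claim that after the replacement \(L\) has no nontrivial Galois subextensions is delicate, since a subextension Galois over \(F\) need not be Galois over \(K\) --- e.g.\ for \(K=\Q_3\), \(L=K(3^{1/4})\), \(F=K(\sqrt{3})\), the extension \(L/F\) is quadratic, hence Galois, even though \(F\) is the maximal subextension Galois over \(K\). What the paper's route buys is brevity for the first two parts; your route pays for the normal-closure computation up front but then makes part~\eqref{part:conj} immediate. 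The one place you are terse --- as you acknowledge --- is the identity \(N\cap\langle\tau\rangle=\langle\tau^{d}\rangle\): the clean way to finish it is to pass to \(G/\langle\tau^{d}\rangle\), which is abelian because \(q_K\equiv 1\bmod d\) and is isomorphic to \(\Z/d\times\widehat\Z\); there the image of \(N\) is topologically generated by the image of \(\tau^{i}\sigma\) and meets the image of \(\langle\tau\rangle\) trivially, giving \(N\cap\langle\tau\rangle\subseteq\langle\tau^{d}\rangle\), while the reverse containment is clear. With that line added, together with the remark that \(e(F/K)\) and \(d\) are both prime to the residue characteristic (so \(\langle\tau^{e(F/K)}\rangle=\langle\tau^{d}\rangle\) indeed forces \(e(F/K)=d\) as integers), your proof is complete.
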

\begin{proof}
    The extension \(F/K\) is totally ramified and Galois, thus \(\Gal(F/K)\) is a quotient of the cyclic profinite group \(\Gal(K^{tr}/K^{nr})\) and  hence \(\Gal(F/K)\) is cyclic.
    
    Let \(H := \Gal(K^{tr}/L)\). Then by Corollary~\ref{cor:Iwasawa}, \(H\) is generated by \(\tau^e\) and \(\tau^i\sigma\) for some \(0\leq i < e=[L:K]\). Thus, Lemma~\ref{lem:ProfiniteMetacyclic} implies that \(H\) is a normal subgroup if and only if \([L:K]\mid q_K - 1\). Applying the same argument to subextensions of \(L\) we deduce that \([F:K] = \gcd(e, q_K - 1)\). 

    To prove the last statement, it suffices to show there exists a \(g\in \Gal(K^{tr}/F)\) such that \(L' = g(L)\), thus we may replace \(K\) with \(F\) and \(e\) with \(e/\gcd(e, q_K - 1)\) which equals \([L:F]\). In particular, we may assume that \(L/K\) has no nontrivial Galois subextensions. By the previous statement, this implies that \(\gcd(e, q_K - 1) =1\), which by Lemma~\ref{lem:ProfiniteMetacyclic} implies that \(\langle\tau^e, \tau^i\sigma\rangle\) is conjugate to \(\langle \tau^e, \tau^j\sigma \rangle\) for all \(i,j\). By Corollary~\ref{cor:Iwasawa}, \(\Gal(K^{tr}/L)\) and \(\Gal(K^{tr}/L')\) both have this form, so the Galois correspondence yields the desired isomorphism.
\end{proof}

\section{Extensions of regular local rings}\label{sec:LocalRingExtns}

    \subsection{Sufficient criteria for monogenicity}

Finite extensions of discrete valuation rings with separable residue fields are always monogenic.  Monogenicity can fail, however, for higher dimensional regular local rings.  For example, \(F[[\sqrt{x}, \sqrt{y}]]\) cannot be generated by a single element over \(F[[x,y]]\).\footnote{
By Nakayama's lemma, it suffices to show that there is no power basis for \(F[[\sqrt{x}, \sqrt{y}]]/\langle x, y\rangle\), which we can do by showing that \(1,\beta, \beta^2, \beta^3\) are linearly dependent in \(F[[\sqrt{x}, \sqrt{y}]]/\langle x, y\rangle\) for any \(\beta = c_0 + c_1\sqrt{x} + c_2\sqrt{y} + c_3\sqrt{xy}\).} Informally, the issue in this example is that there is ramification along two different axes. However, if all ramification is captured by the ramification at a single height \(1\) prime, then regular local ring extensions are monogenic.

    \begin{lemma}\label{lem:MonogenicLocal}
        Let \((R,\frakm_R)\) be a regular local ring and let \((S,\frakm_S)\) be a regular local finite extension of \(R\). If there exists a height \(1\) prime \(\frakq_0\subset S\) such that \(R/\frakq_0\cap R \to S/\frakq_0\) is \'etale,
        then \(S\) is monogenic over \(R\), i.e., there exists a \(\beta\in S\) such that \(R[\beta] = S\).  
    \end{lemma}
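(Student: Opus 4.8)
The plan is to reduce to the case where $R$ is complete, then build $\beta$ by combining a generator of the étale extension $R/\mathfrak{q}_0 \cap R \to S/\mathfrak{q}_0$ with a uniformizer for $\mathfrak{q}_0$, and finally verify $R[\beta] = S$ by Nakayama. First I would complete at the maximal ideals: passing from $R$ to $\widehat{R}$ and $S$ to $S \otimes_R \widehat{R}$ (which is a finite $\widehat{R}$-algebra, local since $S$ is local with residue field finite over $R/\mathfrak{m}_R$, hence $\widehat{S}$), one checks that monogenicity descends — if $\widehat{S} = \widehat{R}[\beta]$ then because $S/R$ is finite and $R \to \widehat{R}$ is faithfully flat, the inclusion $R[\beta'] \hookrightarrow S$ for a suitable $\beta' \in S$ approximating $\beta$ becomes an isomorphism after completion, hence is an isomorphism. (One must be slightly careful: better to argue that $S = R[\beta]$ iff $S/\mathfrak{m}_R S = (R/\mathfrak{m}_R)[\bar\beta]$ by Nakayama, since $S$ is finite over $R$, so the whole problem is really about the fiber ring $\bar{S} := S/\mathfrak{m}_R S$ as an algebra over the field $\kappa := R/\mathfrak{m}_R$.)

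So the heart of the matter is: the Artinian local $\kappa$-algebra $\bar S = S/\mathfrak{m}_R S$ is generated by one element over $\kappa$. Let $\bar{\mathfrak{q}}_0 \subset \bar S$ be the image of $\mathfrak{q}_0$; since $S$ is regular and $\mathfrak{q}_0$ has height $1$, and the hypothesis says $R/(\mathfrak{q}_0\cap R) \to S/\mathfrak{q}_0$ is étale, one deduces that $\mathfrak{m}_R S$ is $\mathfrak{q}_0$-primary — indeed $\mathfrak{q}_0 \cap R$ must be $\mathfrak{m}_R$ (it is a prime of $R$ lying under a height-one prime, and if it were properly contained in $\mathfrak{m}_R$ then $S/\mathfrak{q}_0$ would not be finite, let alone étale, over $R/(\mathfrak{q}_0\cap R)$ with $S/\mathfrak{m}_R S$ Artinian), so $\bar S$ has a unique minimal prime $\bar{\mathfrak{q}}_0$ and $\bar S_{\mathrm{red}} = S/\mathfrak{q}_0$ is a finite separable (étale) field extension of $\kappa$, hence monogenic: $S/\mathfrak{q}_0 = \kappa[\bar\gamma]$ for some $\bar\gamma$, with minimal polynomial $g$ separable. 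Lift $\bar\gamma$ to $\gamma \in \bar S$. The nilradical $\mathcal{N} = \bar{\mathfrak{q}}_0/\mathfrak{m}_R S$ of $\bar S$ satisfies $\mathcal{N}^n = 0$ for some $n$; because $S$ is regular and $\mathfrak{q}_0$ principal, say $\mathfrak{q}_0 = (\varpi)$, the ideal $\bar{\mathfrak{q}}_0 = (\varpi)$ in $\bar S$ is principal, so $\mathcal{N} = (\bar\varpi)$ is generated by the single element $\bar\varpi$.

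Now set $\beta := \gamma + \bar\varpi \in \bar S$ (or, if needed to separate the behaviour, $\beta := \gamma(1 + \bar\varpi)$ or $\beta := \gamma + c\bar\varpi$ for a generic $c$) and claim $\kappa[\beta] = \bar S$. Since $g$ is separable and $g(\gamma) \in \mathcal{N}$ is nilpotent while $g(\beta) \equiv g(\gamma) \pmod{\mathcal{N}}$... the cleanest route: the composite $\kappa[\beta] \hookrightarrow \bar S \twoheadrightarrow \bar S/\mathcal{N} = \kappa[\bar\gamma]$ sends $\beta \mapsto \bar\gamma$, so $\kappa[\beta]$ surjects onto $\bar S/\mathcal{N}$; thus $\bar S = \kappa[\beta] + \mathcal{N}$, and it remains to see $\mathcal{N} \subseteq \kappa[\beta]$. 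Since $\mathcal{N} = (\bar\varpi)$ and $g'(\beta)$ is a unit in $\bar S$ (as $g'(\bar\gamma)$ is a unit in the field $\bar S/\mathcal{N}$ and $\mathcal{N}$ is nilpotent), one has $g(\beta) = g(\gamma + \bar\varpi) = g(\gamma) + g'(\gamma)\bar\varpi + (\text{higher order in }\bar\varpi)$; using $\mathcal{N}^2 \ni \bar\varpi^2$ and an inductive/Nakayama argument on the $\mathcal{N}$-adic filtration, $\bar\varpi$ lies in the ideal generated by $g(\beta)$ and $g(\gamma)$ together with $\kappa[\beta]$, and then an induction up the filtration $\mathcal{N} \supseteq \mathcal{N}^2 \supseteq \cdots \supseteq \mathcal{N}^n = 0$ — each graded piece $\mathcal{N}^i/\mathcal{N}^{i+1}$ is a cyclic $\bar S/\mathcal{N}$-module generated by $\bar\varpi^i$, and $\bar S/\mathcal{N}$ is already hit by $\kappa[\beta]$ — shows $\mathcal{N}^i \subseteq \kappa[\beta]$ for all $i$ by descending induction, giving $\mathcal{N} \subseteq \kappa[\beta]$ and hence $\bar S = \kappa[\beta]$.

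The main obstacle I anticipate is the bookkeeping in this last step: making precise that the single generator $\bar\varpi$ of the principal nilpotent ideal, together with the separably-generated reduction, assembles into one element $\beta$ generating $\bar S$. The key structural input that makes it work — and the thing to state carefully — is that \emph{both} the reduced quotient is monogenic (separability, from the étale hypothesis) \emph{and} the nilradical is principal (regularity of $S$ plus $\mathrm{ht}\,\mathfrak{q}_0 = 1$, so $\mathfrak{q}_0$ is principal), so that "one generator for the residue extension plus one generator for the nilpotents" really is one generator total. If the simple choice $\beta = \gamma + \bar\varpi$ causes friction in characteristic dividing $\deg g$, one replaces it by $\gamma + c\bar\varpi$ and checks the finitely many bad $c$ are avoidable since $\kappa$ contains the relevant field extension (or passes to $\kappa[\bar\gamma]$ first, a finite separable extension, and works there). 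A cleaner high-level alternative worth recording: $\bar S$ is a finite $\kappa$-algebra that is a quotient of the regular (hence monogenic-over-power-series in the complete case) local ring $\widehat{S}$, and a finite local $\kappa$-algebra is monogenic iff $\dim_\kappa \mathfrak{m}_{\bar S}/\mathfrak{m}_{\bar S}^2 \le 1$ plus the residue field is separable — and both hold here because $\mathfrak{m}_{\bar S}$ is generated by $\bar\varpi$ together with the $\mathfrak{m}_{R}$-generators which die, so $\mathfrak{m}_{\bar S}/\mathfrak{m}_{\bar S}^2$ is spanned by $\bar\varpi$ modulo the contribution of the separable residue extension, which can be handled by first extending scalars along $S/\mathfrak{q}_0$.
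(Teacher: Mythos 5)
There is a genuine gap, and it occurs at the very first structural claim: you assert that $\frakq_0\cap R$ must equal $\frakm_R$, so that $S/\frakq_0$ is a finite separable field extension of $\kappa=R/\frakm_R$ and the whole problem collapses to a fiber ring whose reduced quotient is that field. This is false in exactly the case the lemma is designed for. Nothing in the hypotheses forces $\frakq_0\cap R=\frakm_R$; in the paper's application $R$ and $S$ are $2$-dimensional ($R=\OO_{\PP^1,\overline{t}}$, $S=\OO_{\calC,\overline{x}}$), $\frakp_0:=\frakq_0\cap R$ is a height $1$ prime with $R/\frakp_0\simeq\OO_K$ a discrete valuation ring, and $S/\frakq_0\simeq\OO_{K(d)}$ is $1$-dimensional, not a field. (If $\frakq_0\cap R=\frakm_R$ then $S/\frakq_0$ would be an Artinian domain, hence a field, forcing $\frakq_0=\frakm_S$ and $\dim S=1$; so your argument only treats the classical DVR case.) Your justification — that otherwise $S/\frakq_0$ ``would not be finite, let alone \'etale, over $R/(\frakq_0\cap R)$'' — does not hold: finiteness of $S/\frakq_0$ over $R/\frakp_0$ is automatic, and an \'etale extension of $1$-dimensional local rings is perfectly consistent with $S/\frakm_RS$ being Artinian. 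Consequently the facts your construction actually relies on in the $2$-dimensional case — that the residue field $S/\frakm_S$ is separable over $\kappa$ and that $\frakm_S=\frakq_0+\frakm_RS$, so the maximal ideal of $\bar S=S/\frakm_RS$ is principal, generated by the image of a generator of $\frakq_0$ — are never derived. They do follow from the hypothesis, but only by exploiting \'etaleness of $R/\frakp_0\to S/\frakq_0$ at its \emph{maximal} ideal (this is precisely how the paper proceeds: it first proves the \'etale case with a unit derivative of the minimal polynomial, applies it to $R/\frakp_0\to S/\frakq_0$ to get $\beta_0$ and $\frakm_S=\frakq_0+\frakm_RS$, then lifts). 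That derivation is the actual content of the lemma, and it is the step your argument skips by wrongly reducing to the field case.

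A secondary, fixable issue: even granting the correct structure of $\bar S$ (Artinian local, principal maximal ideal $(\bar\varpi)$, separable residue field), the unconditional choice $\beta=\gamma+\bar\varpi$ can fail. If $g(\gamma)$ already generates $(\bar\varpi)$, the terms $g(\gamma)$ and $g'(\gamma)\bar\varpi$ may cancel modulo $(\bar\varpi^2)$, and then $\kappa[\gamma+\bar\varpi]$ need not be all of $\bar S$. One needs the case distinction the paper makes (adjust by $q_0$ \emph{only} when $f_1(\beta)$ fails to generate), or equivalently the observation that at most one value of $c$ modulo $\frakm_{\bar S}$ is bad, so one of $c=0$ or $c=1$ works; your ``generic $c$'' hedge is not enough over small residue fields without this count. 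With these two repairs your outline becomes essentially the paper's proof (Nakayama reduction to $S/\frakm_RS$, monogenicity of the \'etale quotient, and a one-step adjustment by a generator of the principal prime $\frakq_0$), so the route is not genuinely different — it is the same argument with the key input missing.
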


    Under additional assumptions which will hold in our main application (i.e., if \(R\) is an extension of \(\OO_K\), the prime ideal \(\frakq_0\subset S\) lies over the zero ideal of \(\OO_K\), and \(\frakq_0\) is the unique prime ideal lying over \(\frakq_0\cap R\)), then we can say more about the minimal polynomial of \(\beta\) (see Lemma~\ref{lem:RegularLocalRingPresentation} below).

    The proof of Lemma~\ref{lem:MonogenicLocal} that we give below relies on the well-known fact that finite \'etale extensions of local rings are monogenic. We are not aware of a reference that provides the exact form we require, so we provide a proof for the reader's convenience.
    \begin{lemma}\label{lem:FiniteEtaleIsLocallyMonogenic}
        Let \((R,\frakm_R)\) be a regular local ring and let \((S,\frakm_S)\) be a regular local finite extension of \(R\). If \(R\to S\) is \'etale, then there exists a \(\beta\in S\) such that \(R[\beta]=S\). Furthermore, if \(f(z)\in R[z]\) is the minimal polynomial of \(\beta\), then \(f'(\beta)\in S^{\times}\).
    \end{lemma}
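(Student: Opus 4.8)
The plan is to reduce the problem modulo \(\frakm_R\) to the case of a finite separable extension of fields, find a generator there via the primitive element theorem, and then lift it back using Nakayama's lemma. First I would observe that since \(R \to S\) is finite étale, \(S/\frakm_R S\) is a finite étale \(R/\frakm_R\)-algebra; because \(S\) is local, \(S/\frakm_R S\) is a local Artinian ring, and étaleness forces it to be reduced, hence \(S/\frakm_R S = S/\frakm_S =: \ell\) is a finite separable field extension of \(k := R/\frakm_R\). By the primitive element theorem, \(\ell = k(\barbeta)\) for some \(\barbeta \in \ell\); lift \(\barbeta\) arbitrarily to \(\beta \in S\).

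Next I would show \(R[\beta] = S\). Since \(S\) is finite over \(R\) (hence a finite \(R\)-module) and \(R\) is local, Nakayama's lemma applies: it suffices to check that the images of \(1, \beta, \beta^2, \dots, \beta^{n-1}\) span \(S/\frakm_R S = \ell\) over \(k\), where \(n = [\ell : k]\). But those images are \(1, \barbeta, \dots, \barbeta^{\,n-1}\), which form a \(k\)-basis of \(\ell = k(\barbeta)\) precisely because \(\barbeta\) has degree \(n\) over \(k\). Hence \(R[\beta] = S\), and since \(S\) is free of rank \(n\) over \(R\) (étale over a local ring, hence flat and finite, hence free), \(1, \beta, \dots, \beta^{n-1}\) is in fact an \(R\)-basis, so \(\beta\) has a monic minimal polynomial \(f(z) \in R[z]\) of degree \(n\) and \(S \cong R[z]/(f)\).

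Finally, for the derivative claim: the standard description of the different/discriminant of a monogenic extension gives that \(R[z]/(f)\) is étale over \(R\) if and only if \(f'(\beta)\) is a unit in \(R[z]/(f) = S\). More directly, étaleness of \(R \to S\) together with \(S = R[z]/(f)\) means the module of Kähler differentials \(\Omega_{S/R} = S/(f'(\beta))\) vanishes, which forces \(f'(\beta) \in S^\times\). Alternatively, reduce mod \(\frakm_R\): \(\bar f \in k[z]\) is the minimal polynomial of \(\barbeta\), and separability of \(\ell/k\) means \(\bar f' (\barbeta) \neq 0\) in \(\ell\), i.e.\ \(f'(\beta) \notin \frakm_S\), hence \(f'(\beta) \in S^\times\). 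The only mild subtlety — and the one point I would be careful about — is justifying that a finite étale algebra over a local ring that happens to be local is automatically a field extension of the residue field (i.e.\ that \(\frakm_R S = \frakm_S\)); this follows from étale \(\Rightarrow\) unramified, which gives \(\frakm_S = \frakm_R S\) and \(\ell/k\) separable, exactly the input needed above. No serious obstacle is expected; the argument is entirely standard once this reduction is in place.
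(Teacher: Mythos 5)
Your proposal is correct and follows essentially the same route as the paper: use \'etaleness plus locality to get \(\frakm_RS=\frakm_S\) and a separable residue extension, apply the primitive element theorem, lift a generator and conclude by Nakayama, then deduce \(f'(\beta)\in S^\times\) by reducing modulo \(\frakm_R\) and using separability of the residue field extension. The paper's proof is the same argument, so no further comparison is needed.
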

    \begin{proof}
        Since \(R\to S\) is \'etale and \(S\) is local, \(\frakm_R S = \frakm_S\). Furthermore, \'etale maps are preserved under base change, so \(R/\mm_R \to S\otimes R/\mm_R = S/\mm_S\) is \'etale. Thus, \(S/\mm_S\) is a separable extension of \(R/\mm_R\) 
        so by the primitive element theorem 
        there exists a \(\beta_0\in S/\mm_S\) such that \(S/\mm_S = R/\mm_R[\beta_0]\) and whose minimal polynomial \(f_0(z)\in R/\mm_R[z]\) is separable. In particular, \(f'_0(\beta_0)\not\equiv 0 \bmod \mm_S\).  If \(\beta\in S\) is a lift of \(\beta_0\), then we have: 
        \[
        R[\beta]/\mm_R \simeq (R/\mm_R)[\beta\bmod \mm_R S] = (R/\mm_R)[\beta\bmod \mm_S] = (R/\mm_R)[\beta_0] = S/\mm_S = S/\mm_RS.
        \]
        Thus, by Nakayama's lemma, \(R[\beta] = S\).  If \(f(z)\in R[z]\) denotes the minimal polynomial of \(\beta\), then \(f'(\beta)\bmod \mm_S= f_0'(\beta_0)\neq 0\). Therefore \(f'(\beta)\notin \mm_S\) and hence is a unit.
    \end{proof}

    \begin{proof}[Proof of Lemma~\ref{lem:MonogenicLocal}]
        If \(R\to S\) is \'etale, then we apply Lemma~\ref{lem:FiniteEtaleIsLocallyMonogenic}. Henceforth we assume that \(R\to S\) is not \'etale and let \(\frakp_0 := \frakq_0 \cap R \). 
        Denote $R_0:=R/\frakp_0$, $\mm_{R_0}:=\mm_R/\frakp_0$, $S_0:=S/\frakp_0$, and $\mm_{S_0}:=\mm_S/\frakp_0$.
        By assumption, \(R_0  \to S_0\) is \'etale, so Lemma~\ref{lem:FiniteEtaleIsLocallyMonogenic} implies that there is \(\beta_0\in S_0\) such that \(R_0[\beta_0] = S_0\) and furthermore, that  \(f_0'(\beta_0) \in S_0^{\times}\) where \(f_0(x)\) denotes the minimal polynomial of \(\beta_0\). Moreover, since \(R_0\to S_0\) is \'etale, \(\mm_{S_0} = \mm_S/\frakq_0 = \mm_{R_0} S_0 = (\mm_R/\frakp_0)(S/\frakq_0)\), i.e., \(\mm_S = \frakq_0 + \mm_RS \).

        Choose \(\beta\in S\) to be a lift of \(\beta_0\) and choose \(f_1(z)\in R[z]\) to be a monic lift of \(f_0\). Note that this implies that \(f_1\bmod \pp_0\) is the minimal polynomial of \(\beta\bmod \pp_0\), but \(f_1\) may not be the minimal polynomial of \(\beta\). By Nakayama's Lemma, \(R[\beta]=S\) if and only if \(R/\frakm_R[\beta \bmod \mm_R] = S/\frakm_R S\). Consider the following map of exact sequences:
        \[
        \begin{tikzcd}
             0 \arrow[r] & {\dfrac{ R[\beta]\cap \mm_S}{\mm_R[\beta]}} 
             \arrow[r]\arrow[d] & 
             {\dfrac{R}{\frakm_R}[\beta \bmod \mm_R]} 
             \arrow[r]\arrow[d, hook] & \dfrac{R}{\frakm_R}[\beta \bmod \mm_S] 
             \arrow[r]\arrow[d] 
            & 0\\
            0 \arrow[r] & \dfrac{\mm_S}{\mm_RS} \arrow[r] & \dfrac{S}{\mm_RS} \arrow[r] & \dfrac{S}{\mm_S} \arrow[r] & 0.
        \end{tikzcd}
    \]
        Since \(\beta\bmod \mm_R = \beta_0\bmod \mm_{R_0}\), the rightmost vertical map is a bijection. Additionally, since \(f_1(x)\bmod \mm_R\) 
        is the minimal polynomial of \(\beta_0\bmod\mm_R\), \(f_1(\beta) \in  R[\beta]\cap\frakq_0\subset R[\beta]\cap \mm_S\). Thus, if \(f_1(\beta)\bmod\mm_R\) generates \(\mm_S/\mm_R\)
        then the leftmost vertical arrow is a bijection and hence we have the desired equality.

        Assume that \(f_1(\beta)\bmod\mm_R\) does not generate \(\mm_S/\mm_R S = (\frakq_0 + \mm_RS)/\mm_R S\simeq  \frakq_0/\mm_R S\cap \frakq_0 = \frakq_0/\frakp_0S\). 
        By assumption, \(S\) is a regular local ring hence a unique factorization domain, so there exists a \(q_0\in S\) that generates the height \(1\) ideal \(\frakq_0\). 
        Since
        \(f_1(\beta)\) does not generate \(\frakq_0\), 
        \(f_1(\beta) = q_0a\) for some \(a\in {\frakm_S}\) and so
        \[
        f_1(\beta + q_0)= f_1(\beta) + f_1'(\beta)q_0 + q_0^2b = q_0(a + f_1'(\beta) + q_0b),
        \]
        for some \(b\in S\). Further, since \(f_1'(\beta)\bmod \frakq_0 = f_0'(\beta_0)\in S_0^{\times}\), \(f_1'(\beta)\notin \mm_S\); thus \(\langle f_1(\beta + q_0)\rangle = \frakq_0\).
        Therefore, we may replace \(\beta\) with \(\beta + q_0\) and conclude that \(R[\beta] = S\).
    \end{proof}

    \begin{lemma}\label{lem:RegularLocalRingPresentation}
        Retain the notation and assumptions from Lemma~\ref{lem:MonogenicLocal},  let \(f(z)\in R[z]\) be the minimal polynomial of \(\beta\), and let \(\frakp_0 := \frakq_0\cap R\). Assume, in addition, that 
        \begin{enumerate}
            \item the map \(R\hookrightarrow S\) is not \'etale,
            \item there is an inclusion \(\OO_K\hookrightarrow R\) whose composition with the quotient \(R\to R/\frakp_0\) yields an isomorphism \(\OO_K\xrightarrow{\sim}R/\frakp_0\), and
            \item the prime \(\frakq_0\) is the unique prime lying over \(\frakp_0\).
        \end{enumerate} 
        Then, there exists a monic
        polynomial \(\tilde{f}\in \OO_K[z]\) that is irreducible modulo \(\pi\) and an integer \(e>1\) such that 
        \[
            f - \tilde{f}^e = p_0 s\in R[z], 
        \]
        where \(p_0\in R\) is a generator of \(\pp_0\) and \(s\in R[z]\) is such that \(s \bmod f\) is a unit in \(S\).
    \end{lemma}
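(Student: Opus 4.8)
The plan is to name the polynomial \(\tilde f\) and the exponent \(e\) explicitly, prove the congruence \(f\equiv\tilde f^{e}\pmod{\frakp_0}\) by a radical computation in the unique factorization domain \(\OO_K[z]\), and then extract the unit condition on \(s\) from the \(\frakq_0\)-adic valuations of \(p_0\) and of \(\tilde f(\beta)\).

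First I would fix the basic objects. Since \(R\) is a UFD (Auslander--Buchsbaum) it is integrally closed, so the minimal polynomial \(f\) of \(\beta\) lies in \(R[z]\) and \(S=R[\beta]=R[z]/(f)\) is \(R\)-free of rank \(n:=\deg f\); moreover \(R\) and \(S\) being UFDs means the height-one primes \(\frakp_0=(p_0)\) and \(\frakq_0=(q_0)\) are principal. By assumption~(2), \(R_0:=R/\frakp_0\cong\OO_K\), and \(S_0:=S/\frakq_0\) is a finite \'etale \(\OO_K\)-algebra that is a domain, hence \(S_0=\OO_{K(d)}\) for a unique \(d\). Letting \(\beta_0\in S_0\) be the image of \(\beta\) and \(f_0\in R_0[z]=\OO_K[z]\) its minimal polynomial over \(R_0\), one has \(\deg f_0=d\), and \(f_0\bmod\pi\) is irreducible since \(\OO_K[z]/(f_0,\pi)=S_0/\pi S_0\) is the residue field of \(K(d)\). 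I would then take \(\tilde f\in\OO_K[z]\subset R[z]\) to be the monic lift of \(f_0\).

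The crux is the exact identity \(\overline{f}:=f\bmod\frakp_0=f_0^{e}\) in \(\OO_K[z]\). Since \(R\) is normal and \(S\) is a domain finite over \(R\), going-down holds, so each minimal prime of \(\frakp_0 S\) contracts to \(\frakp_0\); by assumption~(3) the only such prime is \(\frakq_0\), so \(\sqrt{\frakp_0 S}=\frakq_0\). Transporting this through the \(\OO_K\)-algebra isomorphism \(S/\frakp_0 S\cong\OO_K[z]/(\overline{f})\) (sending \(\beta\mapsto z\)) and using \((S/\frakp_0 S)_{\red}=S/\frakq_0=S_0=\OO_K[z]/(f_0)\), I get \(\sqrt{(\overline{f})}=(f_0)\) in \(\OO_K[z]\); as \(f_0\) is irreducible in this UFD, \(\overline{f}\) must be a power of \(f_0\) up to a unit, and monicity together with a degree count gives \(\overline{f}=f_0^{e}\) with \(e:=n/d\). (If \(e=1\) then \(\mm_R S=\mm_S\) with separable residue extension, so \(R\to S\) would be \'etale, contradicting assumption~(1); hence \(e\ge2\).) Thus \(f-\tilde f^{e}\in\frakp_0 R[z]=p_0 R[z]\), i.e.\ \(f-\tilde f^{e}=p_0 s\) for some \(s\in R[z]\) (automatically of degree \(<n\)).

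It remains to check \(s\bmod f=s(\beta)\in S^\times\), for which I would compute two valuations at \(\frakq_0\). Localizing \(S/\frakp_0 S\cong\OO_K[z]/(f_0^{e})\) at \(\frakq_0\) identifies \(S_{\frakq_0}/p_0 S_{\frakq_0}\) with the Artinian local ring \(\OO_K[z]_{(f_0)}/(f_0^{e})\) of length \(e\), so \(\val_{\frakq_0}(p_0)=e\). Also \(\tilde f(\beta)\in\frakq_0\) (its image in \(S_0\) is \(f_0(\beta_0)=0\)), and modulo \(\frakp_0 S\) it is the class of \(f_0\), a generator of the nilradical \((f_0)/(f_0^{e})=\frakq_0/\frakp_0 S\); hence \((\tilde f(\beta),p_0)S=\frakq_0\). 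Writing \(\tilde f(\beta)=q_0 u\): if \(u\in\mm_S\) then, since \(e\ge2\) puts \(p_0\in\frakq_0^{e}\subseteq\mm_S\frakq_0\), we would get \(\frakq_0\subseteq\mm_S\frakq_0\), whence \(\frakq_0=0\) by Nakayama --- a contradiction --- so \(u\in S^\times\). Finally \(p_0 s(\beta)=f(\beta)-\tilde f(\beta)^{e}=-q_0^{e}u^{e}\); writing \(p_0=q_0^{e}v\) with \(q_0\nmid v\) (as \(\val_{\frakq_0}(p_0)=e\)) and cancelling \(q_0^{e}\) in the domain \(S\) gives \(v\,s(\beta)=-u^{e}\in S^\times\), so \(s(\beta)\in S^\times\). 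I expect the main obstacle to be the middle step --- upgrading the easy congruence modulo \(\pi\) to the exact equality \(\overline{f}=f_0^{e}\) --- which is exactly where assumption~(3) (forcing \(\frakp_0 S\) to be \(\frakq_0\)-primary) and the UFD property of \(\OO_K[z]\) are essential; everything after that is valuation bookkeeping together with Nakayama's lemma.
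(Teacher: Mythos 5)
Your proof is correct, and while it shares the overall skeleton of the paper's argument (identify \(S/\frakp_0S\) with \(\OO_K[z]/(f\bmod\frakp_0)\), show \(f\bmod\frakp_0\) is a power \(\tilde{f}^e\) of an irreducible, rule out \(e=1\) using non-\'etaleness, then analyze \(s\)), it diverges in two places. For the prime-power step the paper deduces the form \(\tilde{f}^e\) rather tersely from the uniqueness of the prime above \(\frakp_0\); you instead prove \(\sqrt{\frakp_0S}=\frakq_0\) via going-down and transfer this to \(\sqrt{(f\bmod\frakp_0)}=(f_0)\) in the UFD \(\OO_K[z]\), identifying \(\tilde{f}\) intrinsically as a lift of the minimal polynomial of \(\beta\bmod\frakq_0\) and getting irreducibility of \(\tilde{f}\) modulo \(\pi\) directly from \(S/\frakq_0\simeq\OO_{K(d)}\) rather than via Hensel's lemma; this is a more careful rendering of the same idea. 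The genuinely different step is the unit claim for \(s\bmod f\): the paper exploits regularity of \(S=R[z]/f\) through the cotangent-space condition \(f\notin(\mm_R[z]+\langle\tilde{f}\rangle)^2\), whence \(s\notin\mm_R[z]+\langle\tilde{f}\rangle\) and so \(s(\beta)\notin\mm_S=\mm_RS+\langle\tilde{f}\rangle\); you instead use that \(S\) is a UFD so \(\frakq_0=(q_0)\) is principal, compute \(\val_{\frakq_0}(p_0)=e\), show via \(\frakq_0=(p_0,\tilde{f}(\beta))\) and Nakayama that \(\tilde{f}(\beta)\) generates \(\frakq_0\), and then cancel \(q_0^e\) in \(p_0s(\beta)=-\tilde{f}(\beta)^e\). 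Both routes ultimately rest on regularity of \(S\) (in yours through Auslander--Buchsbaum and the DVR \(S_{\frakq_0}\)); the paper's is shorter, while yours yields the sharper byproducts that \(\tilde{f}(\beta)\) alone generates \(\frakq_0\) and that \(p_0\) has \(\frakq_0\)-valuation exactly \(e\). One small point to make explicit in your \(e\geq 2\) step: concluding that \(R\to S\) is \'etale when \(e=1\) requires flatness in addition to unramifiedness, but this is immediate since \(S=R[z]/(f)\) is \(R\)-free.
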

    \begin{proof}
        By Lemma~\ref{lem:MonogenicLocal}, \(S\simeq R[z]/f(z) \) so \(S/\frakp_0S \simeq (R/\frakp_0)[z]/f(z)\bmod\pp_0\). Since \(\frakq_0\) is the unique prime lying over \(\frakp_0\), the image of the monic polynomial \(f\bmod \frakp_0\) in \(\OO_K\) is a prime power.  Further, since \(R\hookrightarrow S\) is not \'etale at \(\frakq_0\), it is a nontrivial prime power, i.e., \(f\bmod \frakp_0 = \tilde{f}(z)^e\) for some \(e>1\) and irreducible polynomial \(\tilde{f}\in \OO_K[z]\).  Thus, \(\frakq_0 = \frakp_0 S + \langle \tilde{f}\rangle\). Since \(\OO_K\simeq R/\frakp_0 \to S/\frakq_0 \simeq \OO_K[z]/\tilde{f}(z)\) is \'etale and \(\OO_K\) is Henselian, the polynomial \(\tilde{f}\) must remain irreducible modulo \(\mm_R\). In particular, \(\frakm_S = \frakm_R S + \langle \tilde f\rangle \subset R[z]/f\).

        The assumption that \(S=R[z]/f\) is a regular local ring implies that \(f\notin (\mm_R[z] + \langle \tilde{f}\rangle)^2\) or equivalently (since \(e>1\)) that \(f - \tilde{f}^e\notin (\mm_R[z] + \langle \tilde{f}\rangle)^2\). Since \(f - \tilde{f}^e \in \frakp_0[z]\subset \mm_R[z]\), for any generator \(p_0\) of \(\frakp_0\), we may write \( f - \tilde{f}^e = p_0 \cdot s \) for some \(s\in R[z]\) that is not in \(\mm_R[z] + \langle\tilde{f}\rangle\). This implies that \(s \bmod f\in S\) is a unit, as desired.
    \end{proof}
    
    \subsection{Ramified monogenic extensions of \(2\)-dimensional regular local rings}\label{sec:RamifiedLocal}

    In this section, we restrict to the case of finite extensions of \(2\)-dimensional regular local rings whose ramification is concentrated at a single prime ideal and that are \(1\)-dimensional extensions of \(\OO_K\). We use Lemmas~\ref{lem:MonogenicLocal} and~\ref{lem:RegularLocalRingPresentation} to determine the isomorphism classes of \(S_{\frakp}/\frakp S_{\frakp}\) for height \(1\) primes \(\frakp\subset R\).

\begin{prop}\label{prop:RamifiedLocalPresentation}
    Let \(R\hookrightarrow S\) be a finite extension of \(2\)-dimensional regular local rings where \(R\supset \OO_K\) and this extension is relative dimension \(1\). Assume that \(R\to S\) satisfies the assumptions of Lemmas~\ref{lem:MonogenicLocal} and~\ref{lem:RegularLocalRingPresentation}, which implies the existence of primes \(\qq_0\subset S\) and \(\pp_0:=\qq_0\cap R \subset R\) such that \(R/\frakp_0\to S/\frakq_0\) is \'etale and \(\frakq_0\) is the unique prime lying over \(\frakp_0\).

    Then for any generator \(p_0\in R\) of \(\frakp_0\) there is a bijection:
    \begin{equation}\label{eq:OK-primes}
     \OO_K\longleftrightarrow \left\{\frakp\subset R :\textup{ht}(\frakp) = 1, \OO_K\hookrightarrow R \to R/\frakp \text{ is an isomorphism} \right\}, \; a\mapsto \frakp_a:= \langle p_0 + \pi a\rangle.
    \end{equation}
    Further, for \(a\in \OO_K \smallsetminus\{0\}\), the \'etale algebra 
    \(\kk(S_{\frakp_a}) := S_{\frakp_a}/\frakp_a S_{\frakp_a}\) has the following properties. 
    \begin{enumerate}
        \item \label{part:Dimension}
        \(\dim_{K} \kk(S_{\frakp_a})  = e(\mm_S/\mm_R)\cdot [S/\mm_S : R/\mm_R]\). 
        
        \item \label{part:Inertia} 
        Any maximal subfield \(L\subset \kk(S_{\frakp_a})\) has inertia degree \(f(L/K)\geq [S/\mm_S:R/\mm_R]\).
        
        \item \label{part:Ramification}
        Any maximal subfield \(L\subset \kk(S_{\frakp_a})\) has ramification degree $e(L/K)$ such that
        \[
        \frac{e(\mm_S/\mm_R)}{\gcd(1 + v(a), e(\mm_S/\mm_R))}\leq e(L/K)\leq e(\mm_S/\mm_R).
        \]
        
        \item \label{part:DistancefrakptoB} 
        If \(1 + v(a)\) is relatively prime to \(e(\mm_S/\mm_R)\) (or more generally if \(e(\mm_S/\mm_R) = e(L/K)\)), then \(\kk(S_{\frakp_a})\) is a field with ramification degree equal to \(e(\mm_S/\mm_R)\) and inertia degree equal to \([S/\mm_S:R/\mm_R]\).
        
        \item \label{part:RamIsom} 
         Assume that \(e: = e(\mm_S/\mm_R)\) is coprime to \(q_K\), which by~\eqref{part:Ramification} implies that all maximal subfields \(L\subset \kk(S_{\frakp_a})\) are tamely ramified over \(K\). Let \(K' := K([S/\mm_S:R/\mm_R])\) and fix a uniformizer \(\pi\) of \(K\). Then there exists a subgroup \(\OO_{K}^{\times}\OO_{K'}^{\times e}/\OO_{K'}^{\times e}<H< \OO_{K'}^{\times}/\OO_{K'}^{\times e}\) (necessarily cyclic) that completely characterizes the isomorphism classes of field extensions \(\kk(S_{\frakp_a})/K\) for \(\frakp_a\) with \(1+v(a)\) coprime to \(e(\mm_S/\mm_R)\). More precisely:
        \begin{enumerate}
            \item \label{part:Existence}
            If \(u\in \OO_{K'}^\times\) generates \(H/\OO_K^{\times}\OO_{K'}^{\times e}\) and \(L/K'\) is a totally tamely ramified extension of degree \(e(\mm_S/\mm_R)\) with \(u\pi\in N_{L/K}(L^{\times})\), then there exists an \(a\in \OO_K\) such that \(L\simeq \kk(S_{\frakp_{a}})\).
            
            \item \label{part:Uniqueness}
            Conversely, if \(1+v(a)\) is coprime to \(e(\mm_S/\mm_R)\) and \(\varpi_{\frakp_a}\in N_{\kk(S_{\frakp_a})/K'}(\kk(S_{\frakp_a})^{\times})\) is a uniformizer, then \(\varpi_{\frakp_a}\pi^{-1}\) generates \(H/\OO_K^{\times}\OO_{K'}^{\times e}\).

            \item \label{part:PartitionByv} 
            Furthermore, if \(a,a'\in\OO_K^{\times}\), then \(\kk(S_{\frakp_a}) \simeq \kk(S_{\frakp_{a'}})\) if and only if \(a'a^{-1}\in \OO_{K'}^{\times e}\).
            In particular, there are \([\OO_K^{\times} : \OO_K^{\times}\cap \OO_{K'}^{\times e}]\) 
            isomorphism classes of totally tamely ramified degree $e$ extensions of $K'$ that can arise as $\kk(S_{\frakp_a})$ for some $a \in \OO_K^{\times}$; and for every $L/K'$ that arises, 
            \[
                \mu_{\OO_K}(\{ a \in \OO_K^{\times} : 
                \kk(S_{\frakp_a})\simeq L\}) = \mu_{\OO_K}(\OO_K^{\times})\mu_{\OO_K^{\times}}(\OO_K^{\times}\cap \OO_{K'}^{\times e})=
                \frac{q_K-1}{q_K}\cdot\frac1{[\OO_K^{\times} : \OO_K^{\times}\cap \OO_{K'}^{\times e}]}.
            \]
        \end{enumerate}
    \end{enumerate}
\end{prop}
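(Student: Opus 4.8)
The engine of the proof is the explicit presentation from Lemma~\ref{lem:RegularLocalRingPresentation}: one may write $S\cong R[z]/f(z)$ with $f=\tilde f^{\,e}+p_0 s$, where $e:=e(\mm_S/\mm_R)>1$, $\tilde f\in\OO_K[z]$ is monic of degree $d_0:=[S/\mm_S:R/\mm_R]$ and irreducible mod $\pi$, $p_0$ generates $\pp_0$, and $s\bmod f$ is a unit of $S$. I would first record the consequences we need: $\OO_K[z]/\tilde f\cong\OO_{K'}$ with $K':=K(d_0)$; $\deg f=ed_0$ and $\deg s<ed_0$; and, since the unit condition says $s\notin\mm_S=\mm_R S+\langle\tilde f\rangle$, the reduction $\bar s:=s\bmod\pi$ is not divisible by $\bar{\tilde f}:=\tilde f\bmod\pi$, equivalently $s(\theta)\in\OO_{K'}^{\times}$ for a root $\theta$ of $\tilde f$ in $\OO_{K'}$. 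The bijection \eqref{eq:OK-primes} is then elementary: because $\pp_0$ has height $1$ and $R/\pp_0\cong\OO_K$ is regular, $p_0\notin\mm_R^{2}$, and together with the uniformizer $\pi$ of $R/\pp_0$ this shows $(p_0,\pi)$ is a regular system of parameters of $R$; for $a\in\OO_K$ the substitution $p_0\mapsto-\pi a$ identifies $R/\langle p_0+\pi a\rangle$ with $\OO_K$ (immediate for the rings occurring in applications, and in general via a $p_0$-adic expansion and completeness of $\OO_K$), so $\langle p_0+\pi a\rangle$ is a height-$1$ prime of the desired type; conversely, for such a prime $\frakp$ the image of $p_0$ in $R/\frakp\cong\OO_K$ lies in $\mm_K$, hence equals $-\pi a$ for a unique $a$, and $p_0+\pi a\in\frakp$ forces $\frakp=\langle p_0+\pi a\rangle$ since both are height-$1$ primes. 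Injectivity is the uniqueness of $a$.

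Now fix $a\in\OO_K\smallsetminus\{0\}$. Reducing $f$ along $R\to R/\frakp_a\cong\OO_K$ gives $\kk(S_{\frakp_a})=S\otimes_R K\cong K[z]/\bar f_a$ with $\bar f_a=\tilde f^{\,e}-\pi a\,s'$, $s':=s\bmod\frakp_a\in\OO_K[z]$, $\bar s'=\bar s$, and $s'\bmod\bar f_a$ a unit; since $S$ is $R$-free of rank $\deg f=ed_0$ this proves \eqref{part:Dimension}. A Newton polygon analysis of $\bar f_a$ around the $d_0$ roots $\theta$ of $\tilde f$ (using $\tilde f'(\theta),s'(\theta)\in\OO_{K'}^{\times}$) shows $\bar f_a$ has $e$ simple roots of valuation $(1+v(a))/e$ near each $\theta$; hence $\bar f_a$ is separable (so $\kk(S_{\frakp_a})$ is indeed étale), and any root $\lambda$ satisfies $v(\tilde f(\lambda))=(1+v(a))/e$ with $\tilde f(\lambda)\in\mm$. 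For a maximal subfield $L=K(\lambda)$: its residue field contains $\F_K(\bar\lambda)$, a root of the irreducible $\bar{\tilde f}$, giving $f(L/K)\geq d_0$ (part \eqref{part:Inertia}); $[L:K]=e(L/K)f(L/K)\leq\deg\bar f_a=ed_0$ forces $e(L/K)\leq e$; and $v(\tilde f(\lambda))=(1+v(a))/e\in\tfrac1{e(L/K)}\Z$ forces $\tfrac{e}{\gcd(1+v(a),e)}\mid e(L/K)$ (part \eqref{part:Ramification}). If $\gcd(1+v(a),e)=1$, or if some maximal subfield has $e(L/K)=e$, then $e(L/K)=e$, $f(L/K)=d_0$, $[L:K]=ed_0=\dim_K\kk(S_{\frakp_a})$, so $\kk(S_{\frakp_a})=L$ is a field with the stated invariants: part \eqref{part:DistancefrakptoB}.

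For part \eqref{part:RamIsom}, assume $e$ is prime to $q_K$, so for $1+v(a)$ prime to $e$ the algebra $A_a:=\kk(S_{\frakp_a})$ is a field and $A_a/K'$ is totally tamely ramified of degree $e$. Pick $\alpha,\beta\in\Z$ with $\alpha(1+v(a))+\beta e=1$; then $\varpi_a:=\tilde f(\lambda)^{\alpha}\pi^{\beta}$ is a uniformizer of $A_a$, and using $\tilde f(\lambda)^{e}=\pi a\,s'(\lambda)$ and $N_{A_a/K'}(\pi a)=(\pi a)^{e}$ one computes $N_{A_a/K'}(\varpi_a)=\pi\cdot(a_0 w_a)^{\alpha}$, where $a_0:=a\pi^{-v(a)}\in\OO_K^{\times}$ and $w_a\in\OO_{K'}^{\times}$ is characterized by $w_a^{e}=N_{A_a/K'}(s'(\lambda))$. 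By Proposition~\ref{prop:IsoTotTamelyRam} applied with $K$ replaced by $K'$ (so item \eqref{it:UniformizerTTRam} there gives $\OO_{K'}^{\times[A_a:K']}=\OO_{K'}^{\times e}$), the $K'$-isomorphism class of $A_a$ is exactly the class of this uniformizer modulo $\OO_{K'}^{\times e}$, i.e.\ (fixing $\pi$) the element $\iota(a):=[(a_0 w_a)^{\alpha}]\in\OO_{K'}^{\times}/\OO_{K'}^{\times e}$. Defining $H$ as the subgroup generated by $\OO_K^{\times}$ and all $\iota(a)$ with $1+v(a)$ prime to $e$, one has $\OO_K^{\times}\OO_{K'}^{\times e}/\OO_{K'}^{\times e}<H<\OO_{K'}^{\times}/\OO_{K'}^{\times e}$ by construction (and $H$ is cyclic since the ambient group $\OO_{K'}^{\times}/\OO_{K'}^{\times e}\cong\Z/\gcd(e,q_{K'}-1)$ is); parts \eqref{part:Existence} and \eqref{part:Uniqueness} together then amount to the statement that $\{\iota(a):1+v(a)\text{ prime to }e\}$ is precisely the set of classes of $H$ lying over a generator of $H/\OO_K^{\times}\OO_{K'}^{\times e}$, which, combined with the existence half of Proposition~\ref{prop:IsoTotTamelyRam}, is what is needed. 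For \eqref{part:PartitionByv}, restricting to $a\in\OO_K^{\times}$ makes $\iota(a)=[a\,w_a]$ with $[w_a]$ constant modulo $\OO_K^{\times}\OO_{K'}^{\times e}$, so $\iota$ descends to an injection on $\OO_K^{\times}/(\OO_K^{\times}\cap\OO_{K'}^{\times e})$, giving the count; the measure statement then follows from this injectivity together with $1+\mm_K\subseteq\OO_K^{\times}\cap\OO_{K'}^{\times e}$ (again tameness) and $\mu_{\OO_K}(\OO_K^{\times})=(q_K-1)/q_K$.

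The main obstacle is the last step of \eqref{part:RamIsom} — pinning down $H$ and verifying \eqref{part:Existence}--\eqref{part:PartitionByv}. Two points require care. First, $\kk(S_{\frakp_a})$ is naturally a totally tamely ramified extension of $K'$, not of $K$, so one must pass from $K'$-isomorphism classes to $K$-isomorphism classes; since $\tilde f$ and $s'$ have coefficients in $K$ the relevant $\Gal(K'/K)$-action fixes each $A_a$, so this descent is harmless, but it needs to be checked (and also checked for the given $L$ in \eqref{part:Existence}). Second, the relation $w_a^{e}=N_{A_a/K'}(s'(\lambda))$ determines $w_a$ modulo $\OO_{K'}^{\times e}$ only up to the $e$-th roots of unity of $K'$, so the crux is to show that this residual ambiguity is precisely encoded by $H$ and is uniform across all admissible $a$ — this is exactly the content of the assertion in \eqref{part:Uniqueness} that $\varpi_{\frakp_a}\pi^{-1}$ always generates $H/\OO_K^{\times}\OO_{K'}^{\times e}$. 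I expect the cleanest route is to carry out this bookkeeping as explicit computations with Teichmüller representatives inside the cyclic group $\OO_{K'}^{\times}/\OO_{K'}^{\times e}$, rather than via the group-theoretic machinery used for the Grunwald problem.
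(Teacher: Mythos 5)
Your bijection and parts \eqref{part:Dimension}--\eqref{part:DistancefrakptoB} follow essentially the paper's route (the monogenic presentation \(f=\tilde f^{\,e}+p_0s\) of Lemma~\ref{lem:RegularLocalRingPresentation}, Hensel factorization over the roots of \(\tilde f\), and a valuation/Newton-polygon computation), and are correct up to minor glibness (simplicity of the roots of \(\bar f_a\) is not a Newton-polygon consequence; it comes from \'etaleness/tameness at \(\frakp_a\)). The genuine gap is in part \eqref{part:RamIsom}, which is the main content, and it is the very point you flag as ``the crux'': you define \(H\) circularly, as the subgroup generated by \(\OO_K^{\times}\) and all realized classes \(\iota(a)\), and then \eqref{part:Existence}--\eqref{part:PartitionByv} require exactly what you have not proved --- that \(\iota(a)\) is, modulo \(\OO_K^{\times}\OO_{K'}^{\times e}\), the \emph{same} generator for every admissible \(a\) (needed for \eqref{part:Uniqueness}), that every generator class is realized by some \(a\) (needed for \eqref{part:Existence}), and, for \eqref{part:PartitionByv}, that \([w_a]\) is constant in \(\OO_{K'}^{\times}/\OO_{K'}^{\times e}\) itself, not merely modulo \(\OO_K^{\times}\OO_{K'}^{\times e}\) as you assert (otherwise the ``if and only if \(a'a^{-1}\in\OO_{K'}^{\times e}\)'' criterion and the resulting count are not justified). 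Your definition \(w_a^{e}=N_{A_a/K'}(s'(\lambda))\) builds in precisely the \(\mu_e(K')\)-ambiguity that blocks all three statements, and an appeal to ``Teichm\"uller bookkeeping'' is not a proof.

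The missing idea, which is how the paper closes the loop, is twofold. First, the element \(s\) in \(f=\tilde f^{\,e}+p_0s\) depends only on \(f\) and \(\frakp_0\), not on \(a\). Second, one should avoid extracting \(e\)-th roots and instead produce an \emph{exact} norm: writing \(f\bmod\frakp_a=h_1\cdots h_r\) with \(h_i\equiv(z-\theta_i)^e\bmod\pi\), Lemma~\ref{lem:LiftingFactorizations} upgrades this to \(h_i\equiv(z-\theta_i)^e\bmod\pi^{v}\) with \(v=1+v(a)\); setting \(g(z)=h_1(z+\theta)\), the element \((-1)^eg(0)\) lies in \(N_{\kk(S_{\frakp_a})/K'}(\kk(S_{\frakp_a})^{\times})\) on the nose, and a direct computation gives \(g(0)\equiv-u\pi^{v}s(\theta)\prod_{i\geq2}(\theta-\theta_i)^{-e}\bmod\pi^{v+1}\). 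Since \(e\) is prime to \(q_K\), \(1+\pi\OO_{K'}\subset\OO_{K'}^{\times e}\), so the norm group contains the uniformizer \((-us(\theta))^{m_v}\pi\) with no root-of-unity ambiguity. Taking \(H:=\langle s(\theta)\rangle\OO_K^{\times}\OO_{K'}^{\times e}/\OO_{K'}^{\times e}\) then makes \eqref{part:Uniqueness} immediate (as \(u\in\OO_K^{\times}\) and \(m_v\) is prime to \(e\), the class of the uniformizer over \(\pi\) generates \(\langle s(\theta)\rangle\) modulo \(\OO_K^{\times}\OO_{K'}^{\times e}\), independently of \(a\)), makes \eqref{part:PartitionByv} immediate (for \(a,a'\in\OO_K^{\times}\) the classes are \([-as(\theta)]\) and \([-a's(\theta)]\) with literally the same \(s(\theta)\)), and yields \eqref{part:Existence} by solving \(\tilde u^{\,v}=-us(\theta)u'^{\,e}\) for \(u,v\) and invoking Proposition~\ref{prop:IsoTotTamelyRam}\eqref{it:UniformizerTTRam}\(\Rightarrow\)\eqref{it:Conj}. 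Without this identification of \(H\) (or an equivalent argument pinning down your \(w_a\) exactly), part \eqref{part:RamIsom} remains unproven in your write-up.
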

\begin{remark}\label{rmks:RamifiedLocalPresentation}\hfill
    The statements in part~\eqref{part:CurveRamIsom} are independent of the choice of uniformizer of \(K\).
\end{remark}
The following lemma will be useful in the proof.
\begin{lemma}\label{lem:LiftingFactorizations}
    Let \(K\) be a nonarchimedean local field and let \(g_1,g_2, \tilde{g_1}, \tilde{g_2}\in \OO_{K}[z]\) be monic polynomials such that \(\deg(g_i) = \deg(\tilde{g}_i)\), \(g_i\equiv \tilde{g}_i\bmod \pi\) for \(i=1,2\)  and \(\gcd(g_1\bmod \pi,g_2\bmod \pi)= 1\).
    If \(g_1g_2 \equiv \tilde{g_1}\tilde{g_2}\bmod \pi^v\) for some positive integer \(v\), then 
    \(
    g_i\equiv \tilde{g_i}\bmod \pi^v
    \) for \(i = 1,2.\)
\end{lemma}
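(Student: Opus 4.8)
The plan is to prove Lemma~\ref{lem:LiftingFactorizations} by induction on \(v\), using the coprimality hypothesis \(\gcd(g_1\bmod\pi, g_2\bmod\pi)=1\) to pin down the ``error terms'' one power of \(\pi\) at a time. The base case \(v=1\) is exactly the standing assumption that \(g_i\equiv\tilde g_i\bmod\pi\).

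For the inductive step, assume the statement for \(v\) and suppose \(g_1g_2\equiv\tilde g_1\tilde g_2\bmod\pi^{v+1}\); a fortiori this holds modulo \(\pi^v\), so the inductive hypothesis gives \(g_i\equiv\tilde g_i\bmod\pi^v\). Since \(g_i\) and \(\tilde g_i\) are monic of the same degree, their difference has strictly smaller degree, so I may write \(g_i=\tilde g_i+\pi^v h_i\) with \(h_i\in\OO_K[z]\) and \(\deg h_i<\deg g_i\) (or \(h_i=0\)). Expanding the product and using \(2v\ge v+1\), the congruence modulo \(\pi^{v+1}\) collapses to \(\pi^v(\tilde g_1 h_2+\tilde g_2 h_1)\equiv 0\bmod\pi^{v+1}\), i.e.
\[
    \overline{g_1}\,\overline{h_2}+\overline{g_2}\,\overline{h_1}=0 \quad\text{in } \F_K[z],
\]
where the bars denote reduction modulo \(\pi\) and \(\overline{g_i}\) is the common reduction of \(g_i\) and \(\tilde g_i\).

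Now I invoke that \(\F_K[z]\) is a PID (or just a UFD): from \(\overline{g_1}\mid \overline{g_2}\,\overline{h_1}\) and \(\gcd(\overline{g_1},\overline{g_2})=1\) I get \(\overline{g_1}\mid \overline{h_1}\). But \(\deg\overline{h_1}\le\deg h_1<\deg g_1=\deg\overline{g_1}\), the last equality because \(g_1\) is monic; hence \(\overline{h_1}=0\), that is \(h_1\equiv 0\bmod\pi\), so \(g_1\equiv\tilde g_1\bmod\pi^{v+1}\). The same argument with the indices swapped handles \(g_2\), completing the induction. The only point requiring a little care — and essentially the only place the argument could be botched — is the bookkeeping of degrees that upgrades the divisibility \(\overline{g_i}\mid\overline{h_i}\) to the vanishing \(\overline{h_i}=0\); everything else is a routine manipulation of congruences, so I do not expect a genuine obstacle.
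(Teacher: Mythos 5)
Your proof is correct and follows essentially the same route as the paper: an induction lifting the congruence one power of \(\pi\) at a time, writing \(g_i=\tilde g_i+\pi^v h_i\), using monicity to bound \(\deg h_i\), and combining the coprimality of \(g_1,g_2\bmod\pi\) with the degree bound to force \(\overline{h_i}=0\). The only difference is cosmetic (you induct on \(v\) itself rather than on an intermediate exponent \(n<v\) as the paper does), so there is nothing further to add.
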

\begin{proof}
    We prove this by induction; the base case is given by assumption. Fix \(1\leq n < v\) and assume that \(\tilde{g_i}\equiv g_i\bmod \pi^n\), i.e., that \(\tilde{g_i} = g_i + \pi^n m_i \) for some polynomials \(m_i\) with \(\deg(m_i) < \deg(g_i)\). Then
    \(
    \tilde{g_1}\tilde{g_2} = g_1g_2 + \pi^n(m_1g_2 + m_2g_1) + \pi^{2n}m_1m_2 \equiv g_1g_2\pmod{\pi^v}.
    \)
    Since \(v>n\geq1\), this implies that \(m_1g_2 + m_2g_1\equiv 0\bmod \pi\). By assumption \(g_1\bmod \pi\) and \(g_2\bmod \pi\) are relatively prime, so \(g_i|m_i\) modulo \(\pi\). Since \(\deg(m_i)<\deg(g_i)\), we must have \(m_i\equiv 0 \bmod \pi\).    
\end{proof}

\begin{proof}[Proof of Proposition~\ref{prop:RamifiedLocalPresentation}]
    Let \(\frakp\subset R\) be a height \(1\) prime (hence principal) such that \(\OO_K\simeq R/\frakp\). Thus, there exists an \(a'\in \OO_K\) such that \(-a'\equiv p_0 \bmod \frakp\), i.e., \(p_0 + a'= pr\) for some \(r\in R\) and generator \(p\) of \(\frakp\). 
    Since \(R/\frakp_0\) is also isomorphic to \(\OO_K\), there exists an \(a_0\in \OO_K\) such that \(p - a_0 = p_0r_0\) for some \(r_0\in R\). Hence, 
    \[
        prr_0 = p_0r_0 + a'r_0 = p - a_0 + a'r_0, \quad \textup{and} \quad
    p_0rr_0 = pr - a_0r = p_0 + a' - a_0r
    \]
    so \(a_0 = a'\cdot(r_0\bmod \pp)\in \OO_K\) and \(a' = a_0\cdot(r\bmod \pp_0)\in \OO_K\).  In particular, \(r_0\bmod \pp \) and \(r\bmod \pp_0\) are units in \(\OO_K\). Since \(R\) is local, this implies that \(r, r_0\in R^{\times}\) and so \(\frakp = \langle pr\rangle = \langle p_0 + a'\rangle\). Moreover, since \(\frakp + \frakp_0\subset \mm_R\), we have \(a'\in \pi\OO_K\), i.e., \(\frakp = \frakp_a:= \langle p_0 + a\pi\rangle\) for some \(a\in \OO_K\) as desired. 

    By Lemma~\ref{lem:MonogenicLocal}, \(S = R[\beta] \simeq R[z]/f(z)\), where \(f(z)\) is the minimal polynomial of \(\beta\). 
    Since \(R\to S\) is not \'etale and the assumptions of Lemma~\ref{lem:RegularLocalRingPresentation} are satisfied, there is a monic polynomial \(\tilde{f}\in \OO_K[z]\), irreducible modulo \(\pi\), such that \(f - \tilde{f}^e = p_0 s\) for some \(e>1\), \(s\in R[z]\) whose image in \(S\) is a unit. From this presentation, we can conclude 
    part~\eqref{part:Dimension}. Since \(f \bmod \frakm_R = (f\bmod \frakp_a)\bmod \pi \equiv \tilde{f}^e\), the field \(L\) contains the unramified extension of degree equal to \(\deg(\tilde{f})\).  Moreover, since \(\frakm_S = \frakm_R S + \langle\tilde{f}\rangle\), we conclude \([S/\mm_S:R/\mm_R] = \deg(\tilde{f})\), proving~\eqref{part:Inertia}.

    Now we consider part~\eqref{part:Ramification}; by part~\eqref{part:Inertia}, \(L\) contains \(K':=K([S/\mm_S:R/\mm_R])\) so it suffices to compute \(e(L/K')\). Let \(\theta_i\in K'\) denote the roots of \(\tilde{f}\). Recall the relation of \(f\) given in Lemma~\ref{lem:RegularLocalRingPresentation}, i.e., \(f= \tilde{f}^e + p_0s\) where \(e = e(\mm_S/\mm_R)\). Note that \(s\) depends only on \(f\) and \(\frakp_0\); it does not depend on \(\frakp_a\). Then we have
    \begin{equation}\label{eq:fmodulop}
        f - \tilde{f}^e = p_0s = (p - a\pi)s. 
    \end{equation}
    Since \(f\bmod \frakp_a\) has the same reduction modulo \(\pi\) as \(\tilde{f}^e = \prod_i(z - \theta_i)^e\), Hensel's lemma implies that \(f(z)\bmod \frakp_a\) factors as \(h_1h_2\dots h_r\), where each \(h_i\in \OO_{K'}[z]\) is monic and \(h_i\equiv (z - \theta_i)^e\bmod \pi\). In particular, \(L\) is obtained from \(K'\) by adjoining the root of a degree \(e\) polynomial, so \(e(L/K')\leq e = e(\mm_S/\mm_R) \).

    Setting \(v := 1 + v(a)\) and \(u:= a\pi^{1-v} \in \OO_K^{\times}\), the above equalities and congruences yield:
    \[
        h_1(z)\cdots h_r(z) \equiv f\bmod \frakp_a = \tilde{f}^e - u\pi^{v} s = (z- \theta_1)^e\cdots (z - \theta_r)^e - u\pi^v s.
    \]
    Since for \(i\neq j\), \(h_i\equiv (z - \theta_i)^e\pmod \pi\) is relatively prime to \(h_j\equiv (z - \theta_j)^e\pmod \pi\) and \(h_i,h_j, (z-\theta_i)^e, (z-\theta_j)^e\) are all monic of degree \(e\), Lemma~\ref{lem:LiftingFactorizations} implies that \(h_i(z) \equiv (z - \theta_i)^e\pmod{\pi^v}\) for all \(i\). In particular, if we set \(\theta := \theta_1\) and \(g(z):= h_1(z + \theta_1)\), then \(g(z) \equiv z^e\bmod \pi^v\) in \(R/\frakp_a[\theta, z] \simeq\OO_{K'}[z]\).  Furthermore, by \eqref{eq:fmodulop}
    \begin{equation}
        g(0) = (f\bmod\pp_a)(\theta)\prod_{i=2}^rh_i(\theta)^{-1} = -u\pi^vs(\theta)\prod_{i=2}^rh_i(\theta)^{-1} 
        \equiv -u\pi^vs(\theta)\prod_{i=2}^r(\theta - \theta_i)^{-e}\bmod \pi^{v+1}\label{eq:g0}
    \end{equation}
    Since \(s\in R[z]\) gives a unit in \(S\), \(s(\theta)\in \OO_{K'}^{\times}\). Thus, the Newton polygon of \(g\) has a single segment of slope \(-\frac{v}{e}\), which gives the desired lower bound in~\eqref{part:Ramification}.
     
    Furthermore, if \(e(L/K) = e\) (which is implied if \(v\) is relatively prime to \(e\)), then combining~\eqref{part:Dimension} and~\eqref{part:Inertia} we have
    \[
        e\cdot [S/\mm_S: R/\mm_R] = \dim_K \kk(S_{\pp_a}) \geq \dim_K L = e(L/K)f(L/K) \geq e \cdot [S/\mm_S:R/\mm_R].
    \]
    Thus, \(L = \kk(S_{\pp_a})\) and \(f(L/K) = [S/\mm_S:R/\mm_R]\), which proves~\eqref{part:DistancefrakptoB}.

    Finally, we prove part~\eqref{part:RamIsom}. Retain all the notation from above and assume that \(q_K\) is coprime to $e$; in particular, by~\eqref{part:Ramification}, \(L/K\) is tamely ramified. Since the maximal unramified subextension of \(\kk(S_{\frakp_a})/K\) is \(K'/K\), it suffices to show that there is a subgroup \(H\) that characterizes the isomorphism classes of totally tamely ramified field extensions \(\kk(S_{\frakp_a})/K'\). We claim this subgroup is \(H := \langle s(\theta) \rangle\OO_K^{\times}\OO_{K'}^{\times e}/\OO_{K'}^{\times e}\).
    
    By Proposition~\ref{prop:IsoTotTamelyRam}, the isomorphism class of \(\kk(S_{\frakp_a})/K'\) is determined by a uniformizer in \(N_{\kk(S_{\frakp_a})/K'}(\kk(S_{\frakp_a})^{\times})\) modulo \(\OO_{K'}^{\times e}\). From the bijection in \eqref{eq:OK-primes}, we must understand how a uniformizer in the norm subgroup of \(\kk(S_{\frakp_a})/K'\) depends on \(a = u\pi^{v-1}\) (recall that \(\frakp_a = \langle p_0 + a\pi\rangle\)). 
    We computed above that \(\kk(S_{\frakp_a})/K'\) is generated by adjoining a root of \(g(z)\) and thus \((-1)^eg(0)\in N_{\kk(S_{\frakp_a})/K'}(\kk(S_{\frakp_a})^{\times})\). Since \(K'^{\times e}\) is contained in the norm subgroup,~\eqref{eq:g0} shows that the norm subgroup contains the uniformizer
    \(
        \left(-us(\theta)\right)^{m_v} \pi,
    \)
    where \(m_v\) is a multiplicative inverse of \(v\) modulo \(e\).  
    
    Let us consider~\eqref{part:CurvePartitionByv}. If \(a,a'\in \OO_{K}^{\times}\) (so \(u = a\) and \(u' = a'\)), then \(-us(\theta)\equiv -u's(\theta)\bmod \OO_{K'}^{\times e}\) if and only if \(u'u^{-1}\in \OO_{K'}^{\times e}\). Thus, \(\kk(S_{\frakp_a}) \simeq \kk(S_{\frakp_{a'}})\) if and only if \(a'a^{-1}\in \OO_{K'}^{\times e}\). In particular, the totally tamely ramified degree \(e\) extensions of \(K'\) that can appear as \(\kk(S_{\frakp_a})\) for some \(a\in \OO_K^{\times}\) are in bijection with \(\OO_K^{\times}/\OO_K^{\times}\cap \OO_{K'}^{\times e}\). 
    Moreover, for a fixed \(a_0\in \OO_K^{\times}\),
    \begin{align*}
        \mu_{\OO_K}\left(\left\{a\in \OO_K^{\times} : \;\kk(S_{\frakp_a})\simeq \kk(S_{\frakp_{a_0}}) \right\}\right) & =  \mu_{\OO_K}\left(\left\{a\in \OO_K^{\times} : aa_0^{-1}\in \OO_{K'}^{\times e} \right\}\right) \\& = \mu_{\OO_K}(\OO_K^{\times})\mu_{\OO_K^{\times}}\left(a_0 (\OO_K^{\times}\cap\OO_{K'}^{\times e}) \right)\\
        & = \mu_{\OO_K}(\OO_K^{\times})\mu_{\OO_K^{\times}}\left(\OO_K^{\times}\cap\OO_{K'}^{\times e} \right)\\
        & = \frac{q_K-1}{q_K}\cdot\frac1{[\OO_K^{\times} : \OO_K^{\times}\cap \OO_{K'}^{\times e}]}.
    \end{align*}
    
    To prove the rest of~\eqref{part:RamIsom} it remains to prove the following statements:
    \begin{enumerate}
        \item[\eqref{part:Existence}] If \(\tilde{u}\in H\) generates \(H/\OO_{K}^{\times}\OO_{K'}^{\times e}\), then there exists a \(u\in \OO_K^{\times}\) and \(v\in \mathbb{N}\) such that \(\left(-u s(\theta)\right)^{m_{v}}\tilde{u}^{-1}\in \OO_{K'}^{\times e}\).
        
        \item[\eqref{part:Uniqueness}] If \(u\in \OO_K^{\times}\) and \(v\in \mathbb{N}\), then \(\langle\left(-us(\theta)\right)^{m_v}\rangle\) generates \(H/\OO_{K}^{\times}\OO_{K'}^{\times e}\).
    \end{enumerate}
    Let \(\tilde{u}\in H\) be an element that generates \(H/\OO_K^{\times}\OO_{K'}^{\times e}\), i.e., \(\tilde{u}^v = -u s(\theta)u'^e\) for some integer \(v\) that is coprime to \(e\), some \(u\in \OO_{K}^{\times}\), and some \(u'\in \OO_{K'}^{\times}\). Taking the \(m_v^{th}\) power of both sides yields~\eqref{part:Existence}.

    Let \(u\in \OO_K^{\times}\) and \(v\in \mathbb{N}\). Since \(m_v\) is an inverse of \(v\) modulo \(e\), \(\langle\left(-us(\theta)\right)^{m_v}\rangle = \langle-us(\theta)\rangle \subset \OO_{K'}^{\times}\OO_{K'}^{\times e}/\OO_{K'}^{\times e}\), and this in turn equals \(H\) modulo \(\OO_{K}^{\times}\OO_{K'}^{\times e}\). Thus we have~\eqref{part:Uniqueness}.
\end{proof}

\section{Isomorphism classes of fibers over a nonarchimedean local field}\label{sec:LocalIso}\label{sec:CoversOfCurves}

Let \(\psi\colon C \to D\) be a nonconstant morphism of smooth projective geometrically integral curves over \(K\). We are interested in morphisms with \defi{good reduction}, which morally means there is an integral model \(\Psi\colon \calC \to \calD\) 
such that the combinatorial ramification data of \(\psi\) agrees with the combinatorial ramification data of \(\Psi_{\F}\colon \calC_{\F}\to \calD_{\F}\).  In other words, the ramification of \(\Psi_{\F}\colon \calC_{\F}\to \calD_{\F}\) is all caused by geometric ramification of \(\psi\).

\begin{defn}\label{def:GoodReductionOfCurves} We say that the morphism \(\psi\) has \defi{good reduction} if:
    \begin{enumerate}
        \item There exist smooth proper \(\OO_K\)-schemes \(\calC, \calD\) whose generic fibers \(\calC_K, \calD_K\) are isomorphic to \(C, D\), respectively, \label{gr:smoothmodel}
        \item There exists a finite surjective
        morphism \(\Psi\colon \calC \to \calD\) such that \(\Psi_{K}=\psi\), and\label{gr:flatmorphism}
        \item There exists a minimal reduced closed codimension \(1\) subscheme \(\calZ\subset \calC\) such that \(\Psi|_{\calC\smallsetminus \calZ}\colon \calC\smallsetminus \calZ \to \calD\) is \'etale, and for this minimal subscheme, the maps \(\calZ \to \Psi(\calZ)\) and \(\Psi(\calZ) \to \Spec \OO_K\) are both \'etale.\label{gr:etaleramification}
    \end{enumerate}
\end{defn}
If \(\psi\) has good reduction, then~\eqref{gr:etaleramification} implies that \(\calZ\) and \(\Psi(\calZ)\) are \'etale covers of \(\Spec \OO_K\) so by the classification statement mentioned in the introduction, \(\calZ\) and \(\Psi(\calZ)\) are isomorphic to \(\Spec\prod_{i}\OO_{K(d_i)}\) for not necessarily equal tuples of positive integers \(d_i\). In other words,
for every closed point \(\overline{x}\in \calC\) the extension of local rings \(\OO_{\calD, \Psi(\overline{x})}\hookrightarrow\OO_{\calC, \overline{x}}\) has at most one codimension \(1\) point causing ramification (see Lemma~\ref{lem:GoodReductionConseq} for the details). Thus we may use the results of Section~\ref{sec:LocalRingExtns} to study the isomorphism classes of fibers \(C_t\). To precisely state our results, we introduce some notation.

Given such a morphism of good reduction, we write \(\calB\subset \calD\) for the (reduced) branch locus of \(\Psi\) and \(B\) for the generic fiber \(\calB_K\) (note that \(\calB\) agrees with the reduced image \(\Psi(\calZ)\) of the subscheme \(\calZ\) given by the definition). We will freely use the equality \(B(K) = \calB(\OO_K)\) given by the valuative criterion for properness. 

For \(t\in (D\smallsetminus B)(K)\), the fiber \(C_t\) is reduced and so \(\kk(C_t)\) is a product of fields \(\prod_{x\in C_t} \kk(x)\). 
As mentioned in the introduction, the maximal subfields of \(\kk(C_t)\) are partitioned by their specializations, i.e.,
\begin{equation}\label{eqn:EtaleAlgDecomposition}
    \kk(C_t) = \prod_{\overline{x}\in \calC_{\overline{t}}} \kk(C_t)_{\overline{x}}, \quad \textup{where} \quad
    \kk(C_t)_{\overline{x}} := \prod_{\substack{x\in C_t\\x \rightsquigarrow \overline{x} }} \kk(x). 
\end{equation}
The main result of this section determines the structure of these \'etale algebras \(\kk(C_t)_{\overline{x}}\) in terms of the geometry of the map \(\Psi\) around \(\overline{x}\).
\begin{thm}\label{thm:RamAndInertiaDegrees}
    Let \(\psi\colon C \to D\) be a morphism between smooth projective geometrically integral curves that has good reduction,  let \(t\in (D-B)(K)\), let $\overline{t}$ be the specialization of $t$, and let \(\overline{x}\in \calC_{\overline{t}}\). Define
    \[
        v(t, \calB) := \begin{cases}
            \max_{t_0\in B(K)}\left\{n : t= t_0 \in \calD (\OO_K/\pi^n)\right\} & \textup{if }e(\overline{x}/\overline{t}) > 1, \\
            \infty & \textup{otherwise}.
        \end{cases}
    \]
    \noindent Then: 
    \begin{enumerate}
        \item \(\dim_K \kk(C_t)_{\overline{x}} = e({\overline{x}/\overline{t}})\deg(\overline{x})\).\label{it:dim}
        \item Any maximal subfield \(L\subset\kk(C_t)_{\overline{x}}\) has inertia degree \(f(L/K)\geq\deg(\overline{x})\).
        \item Any maximal subfield \(L\subset\kk(C_t)_{\overline{x}}\) has ramification degree 
        \[
        \frac{e(\overline{x}/\overline{t})}{\gcd(v(t, \calB), e(\overline{x}/\overline{t}))}\leq e(L/K)\leq e(\overline{x}/\overline{t}).
        \]
        \item If \(v(t,\calB)\) is relatively prime to \(e(\overline{x}/\overline{t})\) (or, more generally, if \(e(L/K) = e(\overline{x}/\overline{t})\)), then \(\kk(C_t)_{\overline{x}}\) is a field with ramification degree equal to \(e(\overline{x}/\overline{t})\) and inertia degree equal to \(\deg(\overline{x})\).\label{it:RelPrimeCase}
        \item \label{part:CurveRamIsom} 
        Assume that \(e: = e(\overline{x}/\overline{t})\) is coprime to \(q_K\), which by~\eqref{part:Ramification} implies that all maximal subfields \(L\subset \kk(C_t)_{\overline{x}}\) are tamely ramified over \(K\). Let \(K' := K(\deg \overline{x})\) and fix a uniformizer \(\pi\) of \(K\).
        Then there exists a subgroup \(\OO_{K}^{\times}\OO_{K'}^{\times e}/\OO_{K'}^{\times e}<H< \OO_{K'}^{\times}/\OO_{K'}^{\times e}\) (necessarily cyclic) that completely characterizes the isomorphism classes of field extensions
        \(\kk(C_t)_{\overline{x}}\) for \(t\) with \(v(t,\calB)\) coprime to \(e(\overline{x}/\overline{t})\).
        More precisely:
        \begin{enumerate}
            \item \label{part:CurveExistence}
            If \(u\in \OO_{K'}^\times\) generates \(H/\OO_K^{\times}\OO_{K'}^{\times e}\) and \(L/K'\) is a totally tamely ramified extension of degree \(e(\overline{x}/\overline{t})\) with \(u\pi\in N_{L/K}(L^{\times})\), then there exists a \(t \in (D\smallsetminus B)(K)\) with \(t\rightsquigarrow\overline{t}\) such that \(L\simeq \kk(C_t)_{\overline{x}}\).
            
            \item \label{part:CurveUniqueness}
            Conversely, if \(v(t,\calB)\) is relatively prime to \(e(\overline{x}/\overline{t})\) and \(\varpi_{\frakp}\in N_{\kk(C_t)_{\overline{x}}/K'}(\kk(C_t)_{\overline{x}}^{\times})\) is a uniformizer, then \(\varpi_{\frakp}\pi^{-1}\) generates \(H/\OO_K^{\times}\OO_{K'}^{\times e}\).

            \item \label{part:CurvePartitionByv} Furthermore, there are \([\OO_K^{\times} : \OO_K^{\times}\cap \OO_{K'}^{\times e}]\) 
            isomorphism classes of totally tamely ramified degree $e$ extensions of $K'$ that can arise as $\kk(C_{t_0})_{\overline{x}}$ for some $t_0 \in \OO_K^{\times}$; and for every such $\kk(C_{t_0})_{\overline{x}}$, 
            then
            \[
            \mu_{D}\left(\left\{t\in (D\smallsetminus B)(K) : t\rightsquigarrow\overline{t_0}, \; v(t,\calB) = 1, \;\kk(C_t)_{\overline{x}}\simeq \kk(C_{t_0})_{\overline{x}}\right\}\right) = \frac{q_K-1}{q_K}
            \cdot\frac1{[\OO_K^{\times} : \OO_K^{\times}\cap \OO_{K'}^{\times e}]}.
            \]
        \end{enumerate}
    \end{enumerate}
    \end{thm}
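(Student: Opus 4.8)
The plan is to deduce Theorem~\ref{thm:RamAndInertiaDegrees} from Proposition~\ref{prop:RamifiedLocalPresentation} by setting up a dictionary between the local geometry of \(\Psi\) at \(\overline x\) and the ring-theoretic situation of that proposition; once the dictionary is in place, essentially every clause of the theorem is a restatement of a clause of the proposition. First I would reduce to completed local rings. Since \(\Psi\) is finite, the stalk \(A:=(\Psi_*\OO_{\calC})_{\overline t}\) is module-finite over \(R_0:=\OO_{\PP^1_{\OO_K},\overline t}\), and the section \(t\) (which specializes to \(\overline t\)) is, by the valuative criterion of properness, the generic point of an integral horizontal divisor whose local ring at \(\overline t\) is a height \(1\) prime \(\frakp\subset R_0\) with \(R_0/\frakp\simeq\OO_K\) (so \(\frakp\) is principal, \(R_0\) being regular). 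Completing at \(\overline t\) decomposes \(A\) as \(\widehat A=\prod_{\overline x'}S_{\overline x'}\) with \(S_{\overline x'}:=\widehat{\OO_{\calC,\overline x'}}\) module-finite over \(R:=\widehat{R_0}\cong\OO_K[[u]]\); because \(A\) is module-finite over \(R_0\) and \(C_t/K\) is finite étale (as \(t\notin B\)), the decomposition~\eqref{eqn:EtaleAlgDecomposition} is recovered after completing, giving \(\kk(C_t)_{\overline x}=S\otimes_R K=\kk(S_\frakp)\) in the notation of Proposition~\ref{prop:RamifiedLocalPresentation}, where \(S:=S_{\overline x}\) and \(R\hookrightarrow S\) is a finite extension of \(2\)-dimensional complete regular local rings, \(R\supset\OO_K\) of relative dimension \(1\).

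Next I would check that \(R\hookrightarrow S\) meets the hypotheses of Lemmas~\ref{lem:MonogenicLocal} and~\ref{lem:RegularLocalRingPresentation}; this is exactly where Definition~\ref{def:GoodReductionOfCurves} enters, via Lemma~\ref{lem:GoodReductionConseq}. The minimal ramification divisor \(\calZ\) of \(\Psi\) maps étale onto \(\calB=\Psi(\calZ)\), which is étale over \(\Spec\OO_K\), so \(\calZ\to\Spec\OO_K\) is étale and \(\calZ\) is a disjoint union of spectra of unramified \(\OO_K\)-algebras. Hence, locally at \(\overline x\), either \(\overline x\notin\calZ\), in which case \(R\hookrightarrow S\) is étale, \(\kk(C_t)_{\overline x}\) is the unramified extension \(K(\deg(\overline x))\) of \(K\), and parts~\eqref{it:dim}--\eqref{it:RelPrimeCase} of the theorem hold trivially (with \(v(t,\calB)=\infty\)); or \(\overline x\in\calZ\), in which case the branch of \(\calZ\) through \(\overline x\) provides the unique codimension \(1\) prime \(\frakq_0\subset S\) along which \(R\hookrightarrow S\) is ramified, it is the unique prime of \(S\) over \(\frakp_0:=\frakq_0\cap R\) (the multiplicity and degree of \(\frakq_0\) over \(\frakp_0\) already saturate the local degree of the fiber), the map \(R/\frakp_0\to S/\frakq_0\) is the étale inclusion \(\OO_K\hookrightarrow\OO_{K(\deg(\overline x))}\), and the structure morphism \(\OO_K\hookrightarrow R\) induces an isomorphism onto \(R/\frakp_0\). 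Thus the hypotheses of Proposition~\ref{prop:RamifiedLocalPresentation} are satisfied, with \(e(\mm_S/\mm_R)=e(\overline x/\overline t)\), \([S/\mm_S:R/\mm_R]=\deg(\overline x)\), \(K'=K(\deg(\overline x))\), and \(e(\overline x/\overline t)\le\deg(\psi)<\Char(\F)\), so \(e(\overline x/\overline t)\) is coprime to \(q_K\).

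It then remains to match the two ``distance'' parameters. Under the bijection~\eqref{eq:OK-primes}, \(a\mapsto\frakp_a=\langle p_0+\pi a\rangle\), our prime \(\frakp\) corresponds to some \(a\in\OO_K\smallsetminus\{0\}\), and \(1+v(a)=v(t,\calB)\): indeed \(\frakp_0\) is the local ideal at \(\overline t\) of the closure of the branch point \(t_0\in B(K)\) specializing to \(\overline t\), and \(t\equiv t_0\) in \(\PP^1(\OO_K/\pi^n)\) precisely when \(\frakp_a\equiv\frakp_0\bmod\pi^n\), i.e.\ when \(v(a)\ge n-1\). With this dictionary, parts~\eqref{it:dim}--\eqref{it:RelPrimeCase} of the theorem are parts~\eqref{part:Dimension}--\eqref{part:DistancefrakptoB} of Proposition~\ref{prop:RamifiedLocalPresentation}, and part~\eqref{part:CurveRamIsom} with subparts~\eqref{part:CurveExistence}--\eqref{part:CurvePartitionByv} is part~\eqref{part:RamIsom} with subparts~\eqref{part:Existence}--\eqref{part:PartitionByv}; the only items needing an extra word are that the \(a\) produced in~\eqref{part:CurveExistence} yields an honest point \(t\in(\PP^1\smallsetminus B)(K)\) (clear, since \(\frakp_a\subset R\cong\OO_K[[u]]\) is the ideal of a genuine \(\OO_K\)-section of \(\A^1_{\OO_K}\subset\PP^1_{\OO_K}\), namely \(u=u(t_0)-\pi a\)), and that the measure identity in~\eqref{part:CurvePartitionByv} follows from~\eqref{part:PartitionByv} by transporting \(\mu_{\OO_K}\) to \(\mu_{\PP^1}\) along this parametrization \(a\leftrightarrow t\) of the residue disk of \(\overline t\).

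The main obstacle is the second step: extracting from Definition~\ref{def:GoodReductionOfCurves} the precise ring-theoretic facts that the completed extension \(R\hookrightarrow S\) has a \emph{single} ramified height \(1\) prime \(\frakq_0\), that this \(\frakq_0\) is the \emph{unique} prime of \(S\) over \(\frakp_0\), and that \(\frakp_0\) is the horizontal prime with \(R/\frakp_0\simeq\OO_K\). All three are forced by the étaleness of \(\calZ\to\calB\to\Spec\OO_K\) together with tameness (\(\deg(\psi)<\Char(\F)\)) and the fiber-degree formula \(\sum_{\overline x'\in\Psi^{-1}(\overline t)}e(\overline x'/\overline t)\deg(\overline x')=\deg(\psi)\), but turning this geometry into a clean local statement is the technical heart and is essentially the content of Lemma~\ref{lem:GoodReductionConseq}. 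A minor secondary point is the assertion in the first step that completion does not change \(\kk(C_t)_{\overline x}\); this is harmless because \(A\) is module-finite over \(R_0\) and \(C_t/K\) is finite étale, so \(\kk(C_t)=A\otimes_{R_0}K=\widehat A\otimes_R K\) and the idempotent cutting out \(\kk(C_t)_{\overline x}\) is already defined over \(\widehat A\).
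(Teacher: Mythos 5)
Your proposal is correct and follows essentially the same route as the paper: invoke the consequences of good reduction (Lemma~\ref{lem:GoodReductionConseq}) to verify the hypotheses of Proposition~\ref{prop:RamifiedLocalPresentation} for the local ring extension \(\OO_{\PP^1,\overline{t}}\hookrightarrow\OO_{\calC,\overline{x}}\), identify \(e(\mm_S/\mm_R)=e(\overline{x}/\overline{t})\), \([S/\mm_S:R/\mm_R]=\deg(\overline{x})\), and \(v(t,\calB)=1+v(a)\) under the bijection~\eqref{eq:OK-primes}, and then transfer each clause (handling the \(e(\overline{x}/\overline{t})=1\) case separately via \'etaleness). The only deviation is your passage to completed local rings \(\OO_K[[u]]\), which is harmless but unnecessary, since the paper works directly with the uncompleted local rings via the monogenic presentation of Corollary~\ref{cor:LocalStructure}.
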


    The unramified case of this result (i.e., the case when \(e(\overline{x}/\overline{t}) = 1\)) follows by a standard argument using the classification of \'etale extensions of \(\OO_K\); we review this in Section~\ref{sec:UnramifiedCase}. The main content of the theorem is in the ramified case. To prove this result, we first articulate some consequences of the good reduction hypothesis (Section~\ref{sec:GoodReductionConsequences}). In particular, we show that local ring extensions coming from these covers of curves satisfy the assumptions of Lemmas~\ref{lem:MonogenicLocal} and~\ref{lem:RegularLocalRingPresentation}, so we may apply Proposition~\ref{prop:RamifiedLocalPresentation} to conclude Theorem~\ref{thm:RamAndInertiaDegrees} (see  Section~\ref{sec:ProofOfRamAndInertiaDegrees} for details).

    \subsection{The unramified case}\label{sec:UnramifiedCase}
    The methods in this section are standard; we include the details for the convenience of the reader.
\begin{prop}[Special case of Theorem~\ref{thm:RamAndInertiaDegrees}]\label{prop:EtAlgUnramified}
    Let \(\overline{t}\in \calD\smallsetminus\calB\) be a closed point so \(\kk(\calC_{\overline{t}})\) is isomorphic to a product of \(r\) fields and let \(d_i\) denote their degree over \(\kk(\overline{t})\) (for \(i= 1, \dots r\)). Then for any codimension \(1\) point \(t \subset D\) that specializes to \(\overline{t}\), we have \[\kk(C_t)\simeq \kk(t)(d_1) \times \kk(t)(d_2) \times \dots \times \kk(t)(d_r),\] where \(\kk(t)(d_i)\) denotes the unramified degree \(d_i\) extension of \(\kk(t)\). 
\end{prop}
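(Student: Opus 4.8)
The plan is to realize \(C_t\) as the generic fiber of a finite \'etale scheme over \(\OO_{\kk(t)}\) whose special fiber is \(\calC_{\overline t}\), and then read off the factorization from the structure theory of \'etale algebras over a Henselian DVR with finite residue field (the fact recalled in the Notation section). Concretely, write \(L := \kk(t)\), a finite extension of \(K\), and let \(\iota\colon \Spec\OO_L \to \PP^1_{\OO_K}\) be the morphism given by the valuative criterion of properness, sending the generic point to \(t\) and the closed point to \(\overline t\). Since \(\calB\) is closed and \(\overline t\notin\calB\), the generic point \(t\) also lies outside \(\calB\) (otherwise \(\overline{\{t\}}\subseteq\calB\) would force \(\overline t\in\calB\)), so \(\iota\) factors through \(\PP^1_{\OO_K}\smallsetminus\calB\).

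The first real step is to check that \(\Psi\) is \'etale over \emph{all} of \(\PP^1_{\OO_K}\smallsetminus\calB\), not merely over the image of \(\iota\). By Definition~\ref{def:GoodReductionOfCurves}(3), \(\Psi|_{\calC\smallsetminus\calZ}\) is \'etale, and since \(\calB = \Psi(\calZ)\) set-theoretically we have \(\Psi^{-1}(\PP^1_{\OO_K}\smallsetminus\calB)\subseteq \calC\smallsetminus\calZ\); hence the restriction of the finite morphism \(\Psi\) over the open \(\PP^1_{\OO_K}\smallsetminus\calB\) is finite \'etale. Pulling back along \(\iota\), the \(\OO_L\)-scheme \(\calD := \calC\times_{\PP^1_{\OO_K}}\Spec\OO_L\) is finite \'etale over \(\OO_L\).

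Next I would analyze \(\calD\) through its two fibers. Its generic fiber is \(\calD\otimes_{\OO_L}L = C\times_{\PP^1_K}\Spec L = C_t\), so \(\kk(C_t)=\kk(\calD_L)\). As \(\OO_L\) is a complete (hence Henselian) DVR with finite residue field \(\F_L\), the structure of finite \'etale \(\OO_L\)-algebras gives \(\calD\cong \coprod_{j=1}^{s}\Spec\OO_{L(e_j)}\) for some positive integers \(e_1,\dots,e_s\), and therefore \(\kk(C_t)\cong\prod_{j=1}^{s}L(e_j)\); it remains only to identify the multiset \(\{e_j\}\) with \(\{d_i\}\). For this I pass to the special fiber: \(\calD\otimes_{\OO_L}\F_L = \calC\times_{\PP^1_{\OO_K}}\Spec\F_L = \calC_{\overline t}\times_{\kk(\overline t)}\Spec\F_L\), which (using that \(\overline t\) is the specialization of \(t\), so the residue field of \(\kk(t)\) is \(\kk(\overline t)\); this is automatic in the main application where \(t\in\PP^1(K)\)) is just \(\calC_{\overline t}\cong\coprod_{i=1}^{r}\Spec\bigl(\kk(\overline t)(d_i)\bigr)\). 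Comparing with \(\calD\otimes_{\OO_L}\F_L\cong\coprod_{j=1}^{s}\Spec\bigl(\F_L(e_j)\bigr)\) and matching connected components — each a \(\Spec\) of a finite field, of degree \(e_j\) resp.\ \(d_i\) over \(\F_L=\kk(\overline t)\) — yields \(s=r\) and \(\{e_j\}=\{d_i\}\) as multisets. Hence \(\kk(C_t)\cong\prod_{i=1}^{r}\kk(t)(d_i)\), as claimed.

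All of the computations here are routine; there is no serious obstacle. The two points deserving a little care are the \'etaleness check in the second paragraph (where the minimality of \(\calZ\) and the equality \(\calB=\Psi(\calZ)\) are used) and the bookkeeping identifying the special fiber of \(\calD\) with \(\calC_{\overline t}\). One could alternatively phrase the final matching step as a direct application of Hensel's lemma, lifting the factorization of \(\kk(\calC_{\overline t})\) over \(\F_L\) to one of \(\kk(C_t)\) over \(L\); I would keep the scheme-theoretic formulation since it makes the compatibility of generic and special fibers transparent.
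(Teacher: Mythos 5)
Your argument is correct and is essentially the paper's own proof: the paper likewise restricts the finite \'etale map over \(\PP^1_{\OO_K}\smallsetminus\calB\) to the closure of \(t\) (rather than to \(\Spec\OO_{\kk(t)}\) via the valuative criterion, a cosmetic difference), invokes the structure of \'etale extensions of \(\OO\), and reads off the degrees from the special fiber \(\calC_{\overline{t}}\). Your extra care in checking \'etaleness over all of \(\PP^1_{\OO_K}\smallsetminus\calB\) and in noting that \(\kk(\overline{t})\) coincides with the residue field of \(\kk(t)\) (automatic for \(t\in\PP^1(K)\), the case used in Theorem~\ref{thm:RamAndInertiaDegrees}) only makes explicit what the paper leaves implicit.
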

\begin{proof}[Direct proof of Proposition~\ref{prop:EtAlgUnramified}]
    Let \(\calU = \calD\smallsetminus \calB\), and let \(\mathcal{T}\subset \calD\) denote the Zariski closure of \(t\). 
    By assumption \(\calT\subset \calU\), so \(\calC_{\calT}\to \calT\simeq \Spec \OO\) is the base change of the \'etale map \(\calC_\calU \to \calU\) and so is \'etale. Therefore \(\calC_{\calT}\) is isomorphic to a disjoint union of \(\Spec \OO'\), where \(\Frac(\OO')/K\) is an unramified extension. The description of the special fiber \(\calC_{\overline{t}}\) then implies that
    \(\kk(C_t)\simeq \kk(t)(d_1) \times \kk(t)(d_2) \times \dots \times \kk(t)(d_r).\)
\end{proof}

    \subsubsection{Determining inertia degrees from the cycle type of Frobenii}\label{ss:cycletypeFrob}
        Proposition~\ref{prop:EtAlgUnramified} shows that for points of \(\calD\) over which \(\psi\) is \'etale, the isomorphism type of the fiber is completely determined by the special fiber of \(\Psi\), i.e., the map \(\Psi_{\F}\colon \calC_{\F} \to \calD_{\F}\); this in turn is controlled by Frobenii elements.

        \begin{prop}\label{prop:etalealg-nonGal}\label{prop:EffCheb}
            Let \(\psi_0 \colon C_0 \to D_0\) be a degree \(d\) morphism of smooth projective geometrically integral curves over a finite field \(\F\), let \(\widehat\psi_0\colon \widehat{C}_0\to D_0\) be the Galois closure of \(\psi_0\), let \(G\) be its Galois group and let $\overline{t} \in D$ such that \(\psi_0\) is \'etale at \(\overline{t}\).
            \begin{enumerate}
                \item \label{part:etale} The Galois closure \(\widehat\psi_0\) is also \'etale at \(\overline{t}\) so
            there is a Frobenius conjugacy class \(c_{\overline{t}}\subset S_{d}\).
            \item \label{part:Isom} If $(d_i)_{i=1}^r\vdash d$ is the cycle type of an element in \(c_{\overline{t}}\), then \[
                \kk(({C}_0)_{\overline{t}}) \simeq \prod_{i=1}^r \kk(\overline{t})(d_i).
            \]
            \item \label{part:EffCheb} Let \(U_0\subset D_0\) be the smooth locus of \(\psi_0\) and let \((d_i)\vdash d\) be a partition. Then
            \[
                \frac{\#\left\{\overline{t} \in U_0(\F) : \kk(({C}_0)_{\overline{t}})\simeq \prod \F(d_i)\right\}}{\#U_0(\F)} = \frac{\#\left\{\sigma\in G : \ct(\sigma) = (d_i)\right\}}{|G|} + O\left(|\F|^{-\frac{1}{2}}\right),
            \]
            where the implied constant in the $O$ depends on the genus of $\widehat{C}_0$, the genus of $D_0$, and the group \(G\).
            \end{enumerate}
        \end{prop}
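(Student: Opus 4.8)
The plan is to establish the three parts in sequence, the first two by Galois theory of covers and the third by reducing, via~\eqref{part:Isom}, to an effective form of Chebotarev's density theorem over function fields. For~\eqref{part:etale} I would first show that the branch locus of the Galois closure $\widehat\psi_0$ coincides with that of $\psi_0$: writing $L:=\F(C_0)$, the field $\F(\widehat C_0)$ is the compositum of the finitely many $\F(\PP^1)$-embeddings $\sigma(L)$ inside a fixed separable closure; each $\sigma(L)$ is $\F(\PP^1)$-isomorphic to $L$, hence unramified over exactly the closed points where $\psi_0$ is \'etale, and a place unramified in every $\sigma(L)$ is unramified in their compositum (check after completing: unramified extensions of a Henselian local ring are closed under compositum). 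Thus $\psi_0$ \'etale at $\overline t$ forces $\widehat\psi_0$ \'etale at $\overline t$, so $\widehat\psi_0^{-1}(\overline t)$ is an \'etale $\kk(\overline t)$-algebra and, over a geometric point $\bar\eta$ above $\overline t$, its fiber is a $G$-torsor carrying a $\Gal(\Fbar/\kk(\overline t))$-action commuting with $G$ (both $\widehat C_0$ and the $G$-action are defined over $\F$). Choosing a base point identifies this torsor with $G$ by left translation and turns $\Frob_{\kk(\overline t)}$ into translation by a well-defined-up-to-conjugacy element $g\in G$; then $c_{\overline t}$ is its conjugacy class, viewed in $S_d$ through the faithful transitive permutation action of $G$ on the $d$ geometric points of a fiber of $\psi_0$, i.e.\ on the coset space of $H:=\Gal(\widehat C_0/C_0)$.

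For~\eqref{part:Isom}, the decomposition of the \'etale algebra $\kk((C_0)_{\overline t})$ into fields is governed by the orbits of $\Gal(\Fbar/\kk(\overline t))$ on the geometric fiber $\psi_0^{-1}(\bar\eta)$, an orbit of size $m$ contributing a factor $\kk(\overline t)(m)$. The $G$-equivariant and $\Gal(\Fbar/\kk(\overline t))$-equivariant surjection $\widehat\psi_0^{-1}(\bar\eta)\to\psi_0^{-1}(\bar\eta)$ identifies the target with the coset space of $H$, under which (by the base-point identification above) the $\Gal(\Fbar/\kk(\overline t))$-action becomes translation by $g$; hence its orbits are exactly the cycles of the image of $g$ in $S_d$, whose lengths are $\ct(g)=\ct(c_{\overline t})=(d_i)_{i=1}^r$ (the choice of left versus right coset convention changes only which permutation one writes down, not its cycle type). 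Assembling the factors gives $\kk((C_0)_{\overline t})\simeq\prod_{i=1}^r\kk(\overline t)(d_i)$.

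For~\eqref{part:EffCheb}: by~\eqref{part:Isom}, $\overline t\in U_0(\F)$ satisfies $\kk((C_0)_{\overline t})\simeq\prod\F(d_i)$ precisely when $c_{\overline t}$ meets the conjugation-stable subset $C:=\{\sigma\in G:\ct(\sigma)=(d_i)\}$, so the numerator counts $\F$-points of $U_0$ whose Frobenius lies in $C$, and the assertion is the effective Chebotarev density theorem for the \'etale $G$-cover $V:=\widehat\psi_0^{-1}(U_0)\to U_0$. I would prove it by expanding the class function $\mathbf 1_C=\sum_\rho\langle\mathbf 1_C,\chi_\rho\rangle\chi_\rho$ over the irreducible characters of $G$ and applying the Grothendieck--Lefschetz trace formula to the lisse sheaves $\calF_\rho$ on $U_0$: the trivial character contributes the main term $\frac{|C|}{|G|}\#U_0(\F)$, while for nontrivial $\rho$ the sum $\sum_{\overline t}\chi_\rho(\Frob_{\overline t})$ equals, up to explicitly bounded $H^0_c$ and $H^2_c$ contributions, $-\tr(\Frob\mid H^1_c(U_{0,\Fbar},\calF_\rho))$, which is $O(|\F|^{1/2})$ by Deligne's bounds, with $\dim H^1_c$ controlled by an Euler-characteristic estimate in terms of $\dim\rho$ and the degree of the branch locus of $\widehat\psi_0$ (hence in terms of $g_{\widehat C_0}$ and $|G|$); dividing by $\#U_0(\F)=|\F|+O(1)$ yields the claim. (Equivalently one may count $\F$-points on the quotient curves $V/\langle\sigma\rangle$, combine the Weil bounds by inclusion--exclusion over cyclic subgroups, or cite an off-the-shelf effective function-field Chebotarev.)

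The point where care is genuinely needed is that this argument produces equidistribution of the Frobenii not in all of $G$ but in the coset $\Frob_{\F}\cdot G^{\mathrm{geom}}$ of the geometric monodromy group $G^{\mathrm{geom}}\trianglelefteq G$ (the image of $\pi_1$ of $\PP^1_{\Fbar}\smallsetminus B$): for a nontrivial $\rho$ that is trivial on $G^{\mathrm{geom}}$ the $H^2_c$ term is nonzero and of size $|\F|$, so the stated main term $\frac{|C|}{|G|}$ is correct exactly when $G^{\mathrm{geom}}=G$, i.e.\ when $\widehat C_0$ is geometrically integral. This is automatic in the intended application, where $\psi_0$ is the good reduction of a cover over a number field whose Galois closure is geometrically integral, a property inherited by every good reduction via flatness and cohomology-and-base-change for $H^0$; I expect checking these monodromy and constant-field compatibilities, rather than the cohomological estimate itself, to be the main obstacle to a fully rigorous proof.
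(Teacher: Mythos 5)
Your proposal is correct, and for parts~\eqref{part:etale} and~\eqref{part:Isom} it is essentially the paper's argument in different packaging: the paper shows each inertia group above \(\overline{t}\) lies in \(G_0=\Gal(\widehat{C}_0/C_0)\), whose normal core in \(G\) is trivial because \(\widehat{C}_0\) is the Galois closure (your compositum-of-conjugates argument is the field-theoretic version of this), and it proves~\eqref{part:Isom} by writing \(\kk((\widehat{C}_0)_{\overline{t}})\simeq \Ind_{\langle\sigma_{\overline{t}}\rangle}^{G}\kk(\overline{y})\), taking \(G_0\)-invariants, and computing the degrees of the factors over the double cosets \(G_0\backslash G/\langle\sigma_{\overline{t}}\rangle\) via the group-theoretic Lemma~\ref{lem:cycletype-grouptheory}; that double-coset computation is exactly your count of orbits of Frobenius-translation on the coset space of \(H\), so the content is identical. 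The genuine divergence is in~\eqref{part:EffCheb}: the paper disposes of it in one line by citing the effective Chebotarev theorem of Murty--Scherk, whereas you re-prove it by character decomposition, the Grothendieck--Lefschetz trace formula, and Deligne's bounds (or, as you note, by Weil bounds on intermediate quotient curves). Your route is more work but self-contained, and your Euler-characteristic bookkeeping does yield a constant depending only on the genus of \(\widehat{C}_0\) and \(G\), matching the stated dependence; citing the off-the-shelf theorem, as the paper does, is the shorter path.

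Your closing caveat is not a defect of your argument but a correct observation about the statement itself: the main term \(\#\{\sigma\in G:\ct(\sigma)=(d_i)\}/|G|\) is right only when the geometric monodromy equals \(G\), i.e.\ when \(\widehat{C}_0\) is geometrically integral; otherwise Frobenii are confined to a coset of the geometric monodromy group (e.g.\ \(y^3=f(t)\) over \(\F_q\) with \(q\equiv 2\bmod 3\), where every unramified rational fiber has type \((2,1)\) yet the formula would predict a positive proportion of type \((1,1,1)\)). The proposition does not state this hypothesis; in the paper it is supplied exactly where the proposition is used, since Theorem~\ref{thm:ChebotarevMain} assumes the Galois closure is geometrically integral and Lemma~\ref{lem:GoodReduction} transfers that to the special fiber, and the constant-field compatibility is what the cited result of Murty--Scherk is designed to track. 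So your flag identifies an implicit hypothesis rather than a gap you need to close, and the rest of your outline would assemble into a complete proof.
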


	\begin{proof}
            Let \(\overline{y}\) be a point of \(\widehat{C}_0\) lying over \(\overline{t}\) and let \(I_{\overline{y}}\) denote its inertia subgroup. Since \(\psi_0\) is unramified at \(\overline{t}\), the inertia subgroup \(I_{\overline{y}}\) must be contained in \(G_0 := \Gal(\widehat{C}_0/C_0)\). Similarly, the same is true for \(I_{\overline{y}'}\) for any other point \(\overline{y}'\in \widehat{C}_0\) lying over \(\overline{t}\) and so the smallest normal subgroup \(N\) of \(G:=\Gal(\widehat{C}_0/D_0)\) that contains \(I_{\overline{y}}\) must be contained in \(G_0\). However, \(G_0\) is simple (since \(\widehat{C}_0\) is the Galois closure). Thus \(N=\{1\} = I_{\overline{y}}\), proving~\eqref{part:etale}.

            Let $c_{\overline{t}}$ denote the Frobenius conjugacy class (i.e., the conjugacy class of a generator of the decomposition group that maps to the Frobenius element), let $\sigma_{\overline{t}}$ denote an element in the Frobenius conjugacy class $c_{\overline{t}}$, and let $\overline{y} \in (\widehat{C}_0)_{\overline{t}}$ be the corresponding point (i.e., $\sigma_{\overline{t}}$ fixes $\overline{y}$).
		Then the \'etale algebra $\kk((\widehat{C}_0)_{\overline{t}})$ is
		\[
			\kk((\widehat{C}_0)_{\overline{t}})=
                \prod_{\overline{y}'\in (\widehat{C}_0)_{\overline{t}}} \kk({\overline{y}'}) =
                \prod_{g\langle{\sigma_{\overline{t}}}\rangle\in G/\langle{\sigma_{\overline{t}}}\rangle} \kk({g(\overline{y})}) 
                \simeq \Ind_{\langle \sigma_{\overline{t}} \rangle}^G \kk({\overline{y}}).
		\]
            By Galois theory \(\kk((C_0)_{\overline{t}})\) is the subfield of \(\kk((\widehat{C}_0)_{\overline{t}})\) fixed by \(G_0\), so we have:
		\[
			\kk((C_0)_{\overline{t}})= 
   \prod_{G_0g \langle{\sigma_{\overline{t}}}\rangle\in G_0 \backslash G /\langle \sigma_{\overline{t}} \rangle}\left(\prod_{g_0g\langle{\sigma_{\overline{t}}}\rangle\in G_0g \langle{\sigma_{\overline{t}}}\rangle/\langle{\sigma_{\overline{t}}}\rangle} \kk({g_0g(\overline{y})})\right)^{G_0} = 
   \prod_{g \in G_0 \backslash G /\langle \sigma_{\overline{t}} \rangle} \kk({g(\overline{y})})^{G_0 \cap g\langle \sigma_{\overline{t}} \rangle g^{-1}}. 
		\]
            It remains to determine the extension \(\kk({g(\overline{y})})^{G_0 \cap g\langle \sigma_{\overline{t}} \rangle g^{-1}}\) which, as a finite extension of the finite field \(\kk(\overline{t})\), is uniquely determined by its degree. Note that by Galois theory
            \[
            [\kk(g(\overline{y}))^{G_0 \cap g\langle \sigma_{\overline{t}}\rangle g^{-1}}:\kk(\overline{t})] = \frac{[\kk(g(\overline{y})):\kk(\overline{t})]}{|{G_0 \cap g\langle \sigma_{\overline{t}}\rangle g^{-1}}|} = \frac{|g\sigma_{\overline{t}}g^{-1}|}{|{G_0 \cap g\langle \sigma_{\overline{t}}\rangle g^{-1}}|} = |\langle g \sigma_{\overline{t}}  g^{-1}\rangle G_0/G_0|.
            \]
            The group-theoretic lemma below completes the proof of part~\eqref{part:Isom}.
Finally~\eqref{part:EffCheb} follows from~\eqref{part:Isom} and the Effective Chebotarev Density Theorem~\cite{Murty-Scherk}*{Thm.~1}.
	\end{proof}

    \begin{lemma}\label{lem:cycletype-grouptheory}
         Let $G < S_d$ be a transitive subgroup, let \(\sigma\in G\) be an element with cycle-type $(d_1, d_2, \ldots, d_r)$, and let \(G_0 := \textup{Stab}(d) < S_d\). Then there are \(r\) double cosets \(G_0\backslash G \slash \langle \sigma \rangle\), and, up to relabeling the double coset representatives \(g_1, \dots, g_r\), we have
        \[
            |\langle g_i \sigma  g_i^{-1}\rangle G_0/G_0| = d_i.
        \]
    \end{lemma}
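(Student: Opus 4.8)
The proof will run through the standard dictionary between double cosets and orbits, so the first step is to set up the relevant identifications carefully. Since $G$ acts transitively on $\{1,\dots,d\}$ and $G_0 = \Stab(d)$, the map $G/G_0 \to \{1,\dots,d\}$, $gG_0 \mapsto g(d)$, is a $G$-equivariant bijection for the left translation action on $G/G_0$; dually (replacing $g$ by $g^{-1}$) the map $G_0\backslash G \to \{1,\dots,d\}$, $G_0 g \mapsto g^{-1}(d)$, is a bijection intertwining right translation by $G$ on $G_0\backslash G$ with the right action $\alpha\cdot g := g^{-1}(\alpha)$ on $\{1,\dots,d\}$.

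Next I would use this to count the double cosets. By definition $G_0\backslash G/\langle\sigma\rangle$ is the set of orbits of $\langle\sigma\rangle$ acting by right translation on $G_0\backslash G$, so under the bijection above it is in canonical bijection with the set of $\langle\sigma\rangle$-orbits on $\{1,\dots,d\}$, i.e.\ with the cycles of $\sigma$. Hence there are exactly $r$ double cosets. Writing $C_1,\dots,C_r$ for the cycles of $\sigma$ with $|C_i| = d_i$, I would pick, for each $i$, an element $g_i\in G$ with $g_i^{-1}(d)\in C_i$ (possible by transitivity); then $G_0 g_i\langle\sigma\rangle$ is precisely the double coset corresponding to $C_i$, so $g_1,\dots,g_r$ is a complete, irredundant set of double coset representatives once they are relabelled to match the given ordering of the cycle type.

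Finally I would compute the index. The quantity $|\langle g_i\sigma g_i^{-1}\rangle G_0/G_0|$ equals $[\langle g_i\sigma g_i^{-1}\rangle : \langle g_i\sigma g_i^{-1}\rangle\cap G_0]$, which by orbit--stabilizer is the cardinality of the orbit of the trivial coset $eG_0$ under left translation by the cyclic group $\langle g_i\sigma g_i^{-1}\rangle$ on $G/G_0$; transported through the $G$-equivariant identification $G/G_0\cong\{1,\dots,d\}$ (under which $eG_0\mapsto d$), this is the size of the $\langle g_i\sigma g_i^{-1}\rangle$-orbit of $d$, that is, the length of the cycle of the permutation $g_i\sigma g_i^{-1}$ containing $d$. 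Since conjugation by $g_i$ carries the cycle decomposition of $\sigma$ to that of $g_i\sigma g_i^{-1}$ via the point-relabelling $\alpha\mapsto g_i(\alpha)$, the cycles of $g_i\sigma g_i^{-1}$ are the sets $g_i(C_j)$, and $d\in g_i(C_j)$ exactly when $g_i^{-1}(d)\in C_j$, i.e.\ when $j=i$. Hence $d$ lies in the cycle $g_i(C_i)$, of length $|C_i|=d_i$, which gives $|\langle g_i\sigma g_i^{-1}\rangle G_0/G_0| = d_i$.

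The argument is entirely formal; the only place that demands care is the bookkeeping of left versus right actions — one must use one and the same identification $G/G_0\cong\{1,\dots,d\}$ both for enumerating the double cosets and for the index computation, and must remember that conjugation by $g_i$ acts on cycles through the relabelling $g_i$ rather than its inverse. I do not anticipate any genuine obstacle beyond keeping these conventions consistent.
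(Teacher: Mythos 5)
Your proof is correct and takes essentially the same route as the paper: both arguments rest on the observation that \(|\langle h\rangle G_0/G_0|\) equals the length of the cycle of \(h\) containing \(d\), and both choose representatives \(g_i\) with \(g_i^{-1}(d)\) lying in the \(i\)-th cycle of \(\sigma\). Your packaging through the bijection \(G_0\backslash G/\langle\sigma\rangle \leftrightarrow \{\text{cycles of }\sigma\}\) is marginally tidier, since it yields the distinctness of the \(r\) double cosets automatically, where the paper only spells out that the \(g_i\) exhaust them.
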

    \begin{proof}
        For $g\in G$, let \(\ell(g)\) denote the order of the orbit of \(d\) under \(g\), i.e., in the cycle representation of \(g\), $d$ is in a cycle of length $\ell(g)$. Then \(g^n\in G_0\) if and only if $n$ is a multiple of $\ell(g)$, so $|\langle g \rangle G_0/G_0|=\ell(g)$. We label the cycles of $\sigma$ such that the $i$-th cycle has length $d_i$. By transitivity for each $1\leq i\leq r$, there is a $g_i\in G$ that conjugates the $i$-th cycle of $\sigma$ to a cycle containing $d$. Thus, by the argument above, \(|\langle g_i \sigma  g_i^{-1}\rangle G_0/G_0| = d_i\). 

        Now let \(g\in G\) and let \(i\) be such that \(g\) conjugates the cycle containing $d$ to the $i$-th cycle of \(\sigma\). In other words, there exists some integer \(j\) such that \(g\sigma^jg_i^{-1} \in G_0\). Thus \(g_1, \dots, g_r\) give a complete set of double coset representatives.
    \end{proof}
    \subsection{Consequences of good reduction}
    \label{sec:GoodReductionConsequences}

    \begin{lemma}\label{lem:GoodReductionConseq}
        Let \(\psi\colon C \to D\) be a morphism of good reduction,  let \(\Psi\colon\calC\to \calD\) be the associated integral model, and let \(\psi_0\colon C_0 \to D_0\) denote the restriction of \(\Psi\) to the special fiber. Let \(\overline{x}\in \calC\) be a closed point and let \(\overline{t} := \Psi(\overline{x})\). We have the following properties.
        \begin{enumerate}
            \item The ring map \(R := \OO_{\calD, \overline{t}}\hookrightarrow S := \OO_{\calC, \overline{x}}\) is a finite extension of regular local rings.
            \item If \(e(\overline{x}/\overline{t})=1\), then \(R\hookrightarrow S\) is \'etale.
            \item If \(e(\overline{x}/\overline{t})>1\), then there exists a unique height \(1\) prime ideal \(\frakp_0\subset R\) such that \(R\to S\) is \'etale at all primes \(\frakq\subset S\) such that \(\frakq\cap R\) does not contain \(\frakp_0\). Furthermore, for this prime \(\frakp_0\):
            \begin{enumerate}
                \item there is a unique prime \(\frakq_0\subset S\) lying over \(\frakp_0\),\label{property:unique}
                \item the ring maps \(R/\frakp_0 \to S/\frakq_0\) and \(\OO_K\to R/\frakp_0\) are both \'etale and \([R/\pp_0:\OO_K] = \deg(\overline{t})\).\label{property:etale}
            \end{enumerate}
            In other words, there is a unique codimension \(1\) point \(x\in \calC\) that specializes to \(\overline{x}\) where \(\Psi\) is ramified and, for this point,
            \[
            e(x/\psi(x)) = e(\overline{x}/\overline{t})\quad \textup{and}\quad\deg(x) = \deg(\overline{x}).
            \]
        \end{enumerate}
    \end{lemma}
    \begin{remark}
        The lemma in particular implies that if \(\overline{x}\in \calC\) is a closed point with \(e(\overline{x}/\overline{t})>1\), then the extension  \(\OO_{\calD,\overline{t}} \to \OO_{\calC, \overline{x}}\) satisfies the assumptions of Lemmas~\ref{lem:MonogenicLocal} and~\ref{lem:RegularLocalRingPresentation}.
    \end{remark}
    \begin{proof}
        The rings \(R := \OO_{\calD, \overline{t}}\) and \(S := \OO_{\calC, \overline{x}}\) are regular local rings because (by Definition~\ref{def:GoodReductionOfCurves}\eqref{gr:smoothmodel}) \(\calC\) and \(\calD\) are smooth \(\OO_K\)-schemes.  
        Definition~\ref{def:GoodReductionOfCurves}\eqref{gr:flatmorphism} implies that \(\Psi\) induces a finite extension of rings \(R\hookrightarrow S\).  

        If \(e(\overline{x}/\overline{t}) = 1\), then \(\mm_R S = \mm_S\). Since any extension of finite fields is separable, \(R\to S\) is \'etale at \(\mm_S\). Since \(S\) is local, this implies that \(R\to S\) is \'etale at all primes.

        Now assume that \(e(\overline{x}/\overline{t})>1\). This implies that \(\overline{x}\in\calZ\) so by purity of the ramification locus, \(\calZ\cap \Spec S\) is cut out by a nonzero ideal \(I\) that is contained in some height \(1\) prime ideal.  
        By Definition~\ref{def:GoodReductionOfCurves}\eqref{gr:etaleramification}, \(\OO_K \to R/I\cap R\) and \(R/I\cap R \to S/I\) are both \'etale. Thus \(S/I\) is an \'etale local extension of \(\OO_K\) and hence is isomorphic to \(\OO_{K(d)}\) for some \(d\). In particular, \(I\) is a prime ideal \(\frakq_0\), or, equivalently, there is a 
        unique codimension \(1\) point \(x\in \calC\) that specializes to \(\overline{x}\) for which \(\psi\) is ramified.
        
        Set \(\frakp_0 := \frakq_0 \cap R\). By the above paragraph, we have \(\OO_K\to R/\frakp_0\) and \(R/\frakp_0 \to S/\frakq_0\) are \'etale. In particular, \(R/\frakp_0\) is the maximal order of an unramified field extension of \(K\) and thus, \([R/\frakp_0:\OO_K] = \deg(\overline{t})\). Thus, we have~\eqref{property:etale}. 
        
        It remains to prove that \(\frakp_0 S\) is contained in a unique height \(1\) prime ideal (necessarily \(\frakq_0\)). By Definition~\ref{def:GoodReductionOfCurves}\eqref{gr:flatmorphism}, \(S\) is a flat \(R\)-module (see, e.g.,~\cite{QingLiu}*{Chapter 4, Remark 3.11}), hence free. Thus,
        \begin{align*}
            [S/\mm_S:R/\mm_R]e(\mm_S/\mm_R) & = \dim_{R/\mm_R}S/\mm_R S = \rank_R S = \dim_{\kk(R_{\frakp_0})}S_{\frakp_0}/\frakp_0 S_{\frakp_0}\\
            & = \sum_{\frakq |\frakp_0} \dim_{\kk(R_{\frakp_0})}S_{\frakq}/\frakp_0 S_{\frakq}
            = \sum_{\frakq|\frakp_0}[S_{\frakq}/\frakq:R_{\frakp_0}/\frakp_0]e(\frakq S_{\frakq}/\frakp_0R_{\frakp_0}).
        \end{align*}
        By definition of \(\frakq_0\), if \(\frakq|\frakp_0\) and \(\frakq\neq \frakq_0\) then \(R\to S\) is \'etale at \(\frakq\), hence \(e(\frakq S_{\frakq}/\frakp_0R_{\frakp_0}) = 1\) and \([S_{\frakq}/\frakq:R_{\frakp_0}/\frakp_0] = [S/\mm_S:R/\mm_R]\). Moreover, \(\OO_K \to S/\frakq_0\) is \'etale, so \([S_{\frakq_0}/\frakq_0:R_{\frakp_0}/\frakp_0] = [S/\mm_S:R/\mm_R]\). Thus the above equation simplifies to
        \[
            e(\mm_S/\mm_R) 
            = e(\frakq_0 S_{\frakq_0}/\frakp_0R_{\frakp_0}) + \#\{\frakq|\frakp_0 : \frakq\neq \frakq_0 \}\geq e(\frakq_0 S_{\frakq_0}/\frakp_0R_{\frakp_0}).
        \]
        Geometrically, we may phrase this as \(e(x/t)\leq e(\overline{x}/\overline{t})\) and \(\deg(x) = \deg(\overline{x})\) where \(x\in \calC\) is the unique codimension \(1\) point that specializes to \(\overline{x}\) and is ramified. It remains to prove that \(e(x/t) = e(\overline{x}/\overline{t}).\)

        Now we work globally, i.e., considering the entire curve. The above relations give the inequality:
        \[
            \sum_{\overline{x}\in C_0} \sum_{\substack{x\in C\\ x\rightsquigarrow \overline{x}}} (e(x/t) - 1)\deg(x) \leq \sum_{\overline{x}\in C_0} 
            (e(\overline{x}/\overline{t}) - 1)\deg(\overline{x}).
        \]
        The left-hand side is the degree of the ramification divisor of \(\psi\) and the right-hand side is the degree of the ramification divisor of \(\psi_0\). By the Riemann-Hurwitz equation, these degrees are equal to \(2g(C) - 2 + 2\deg(\psi)\) and \(2g(C_0) - 2 + 2\deg(\psi_0)\), respectively.
        Since \(\calC \to \Spec \OO_K\) is smooth, \(g(C) = g(C_0)\) and since \(\Psi\) is finite \(\deg(\psi) = \deg(\psi_0)\). Thus we obtain the desired equality of ramification indices.
    \end{proof}

    \begin{cor}[Corollary of Lemmas~\ref{lem:GoodReductionConseq} and~\ref{lem:MonogenicLocal}]\label{cor:LocalStructure}
        Let \(\psi\colon C \to D\) be a morphism of good reduction,  let \(\Psi\colon\calC\to \calD\) be the associated integral model, let $\overline{t}\in \calD$ be a closed point, and let $\overline{x}\in \calC_{\overline{t}}$.  There exists a monic irreducible polynomial $f_{\overline{x}}(z)\in \OO_{\calD, \overline{t}}[z]$ such that \(\OO_{\calC, \overline{x}}\simeq \OO_{\calD, \overline{t}}[z]/f_{\overline{x}}(z)\).  In particular, for all $t\in D \smallsetminus B$ that specialize to $\overline{t}$, we have
        \[
                \kk(C_t)_{\overline{x}} \simeq \frac{\kk(t)[z]}{\langle f_{\overline{x}}(z) \bmod \pp_t\rangle}, 
        \]
        where $\frakp_t$ is the prime ideal corresponding to $t$ of the local ring $\calO_{\calD, \overline{t}}$.
    \end{cor}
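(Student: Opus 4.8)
The plan is to read off monogenicity from the good reduction hypotheses via Lemma~\ref{lem:GoodReductionConseq}, and then obtain the fiber description by a formal base-change computation. Write $R := \OO_{\PP^1, \overline{t}}$ and $S := \OO_{\calC, \overline{x}}$. By Lemma~\ref{lem:GoodReductionConseq} these are regular local rings and $R \hookrightarrow S$ is finite; moreover $S$ is a domain, since $\calC$ is integral (it is smooth, hence flat, over the DVR $\OO_K$ and has geometrically integral — in particular irreducible — generic fiber), and $S$ is free over $R$ by the flatness argument in the proof of Lemma~\ref{lem:GoodReductionConseq}. I would then split into cases on $e(\overline{x}/\overline{t})$. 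If $e(\overline{x}/\overline{t}) = 1$, Lemma~\ref{lem:GoodReductionConseq} gives that $R \to S$ is \'etale, so Lemma~\ref{lem:FiniteEtaleIsLocallyMonogenic} produces $\beta \in S$ with $R[\beta] = S$; if $e(\overline{x}/\overline{t}) > 1$, the Remark following Lemma~\ref{lem:GoodReductionConseq} says $R \hookrightarrow S$ meets the hypotheses of Lemma~\ref{lem:MonogenicLocal}, which again produces such a $\beta$. Let $f_{\overline{x}}(z) \in \Frac(R)[z]$ be the minimal polynomial of $\beta$. Since $\beta$ is integral over $R$ and $R$ is integrally closed, $f_{\overline{x}}$ is monic with coefficients in $R$; since $S = R[\beta]$ is a domain, $f_{\overline{x}}$ is irreducible over $\Frac(R)$, and irreducibility in $R[z]$ follows because a nontrivial factorization of a monic polynomial in $R[z]$ yields, after normalizing leading coefficients, a nontrivial factorization over $\Frac(R)$. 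Finally, dividing any element of $\ker(R[z]\to S,\ z\mapsto\beta)$ by the monic polynomial $f_{\overline{x}}$ inside $R[z]$ shows this kernel is $f_{\overline{x}}R[z]$, so $\OO_{\calC, \overline{x}} = R[\beta] \cong R[z]/f_{\overline{x}}(z)$.

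For the ``in particular'' clause, fix $t \in \PP^1 \smallsetminus B$ specializing to $\overline{t}$ and let $\frakp_t \subset R$ be the corresponding height-$1$ prime, so $\kk(t) = \Frac(R/\frakp_t) = R_{\frakp_t}/\frakp_t R_{\frakp_t}$. First I would identify $\kk(C_t)_{\overline{x}}$ with the base change $S_{\frakp_t}/\frakp_t S_{\frakp_t} = S \otimes_R \kk(t)$: the points $x \in C_t$ specializing to $\overline{x}$ are precisely the primes $\frakq \subset S$ with $\frakq \cap R = \frakp_t$ (every prime of $S$ lies in $\mm_S$, so the specialization condition is automatic), and since $t \notin B$ the fiber $C_t$ is reduced, so each localization of the Artinian $\kk(t)$-algebra $S_{\frakp_t}/\frakp_t S_{\frakp_t}$ at such a $\frakq$ is the field $\kk(\frakq) = \kk(x)$; hence $\kk(C_t)_{\overline{x}} = \prod_{\frakq \mid \frakp_t} \kk(x) = S_{\frakp_t}/\frakp_t S_{\frakp_t}$, matching the notation of Proposition~\ref{prop:RamifiedLocalPresentation}. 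Then I would base change the isomorphism $S \cong R[z]/f_{\overline{x}}(z)$ along $R \to \kk(t)$: localization at $\frakp_t$ and reduction modulo $\frakp_t$ commute with the quotient by $f_{\overline{x}}$, giving
\[
\kk(C_t)_{\overline{x}} \;\cong\; S_{\frakp_t}/\frakp_t S_{\frakp_t} \;\cong\; \kk(t)[z]\big/\langle f_{\overline{x}}(z) \bmod \frakp_t\rangle,
\]
as claimed.

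I expect the only mildly delicate step to be the identification $\kk(C_t)_{\overline{x}} \cong S_{\frakp_t}/\frakp_t S_{\frakp_t}$: one has to correctly match the ``specializes to $\overline{x}$'' condition with primes of $S$ over $\frakp_t$, and invoke reducedness of $C_t$ (which holds because $t$ avoids the branch locus $B$) to pass from the a priori Artinian base change to the product of residue fields in~\eqref{eqn:EtaleAlgDecomposition}. Everything else — the monogenicity, the passage from a monogenic generator to a presentation $R[z]/f_{\overline{x}}$, and the irreducibility of $f_{\overline{x}}$ — is formal once Lemmas~\ref{lem:GoodReductionConseq}, \ref{lem:MonogenicLocal}, and~\ref{lem:FiniteEtaleIsLocallyMonogenic} are available.
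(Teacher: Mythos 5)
Your proposal is correct and follows exactly the route the paper intends: the corollary is stated without proof as an immediate consequence of Lemmas~\ref{lem:GoodReductionConseq} and~\ref{lem:MonogenicLocal} (with Lemma~\ref{lem:FiniteEtaleIsLocallyMonogenic} covering the \'etale case), and your write-up simply fills in the routine details — the minimal-polynomial presentation \(S\simeq R[z]/f_{\overline{x}}\) (note \(S\) is a domain already because it is regular local, so the detour through integrality of \(\calC\) is unnecessary) and the identification \(\kk(C_t)_{\overline{x}}\simeq S_{\frakp_t}/\frakp_t S_{\frakp_t}\) followed by base change. No gaps; the one point you flag as delicate (matching specializations to primes over \(\frakp_t\) and using reducedness of \(C_t\) for \(t\notin B\)) is handled correctly.
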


    \subsection{Proof of Theorem~\ref{thm:RamAndInertiaDegrees}}\label{sec:ProofOfRamAndInertiaDegrees}
        Let \(R := \OO_{\calD,\overline{t}}\).  By Corollary~\ref{cor:LocalStructure}, there exists a monic irreducible polynomial $f_{\overline{x}}\in R[z]$ such that  \(\OO_{\calC, \overline{x}}\simeq R[z]/f_{\overline{x}}\), and 
        \[
                \kk(C_t)_{\overline{x}} \simeq \frac{\kk(t)[z]}{\langle f_{\overline{x}}(z) \bmod \pp_t\rangle}. 
        \]
        If \(e(\overline{x}/\overline{t}) = 1\), then \(f_{\overline{x}}\bmod \mm_R\) is a separable irreducible polynomial of degree equal to \(\deg f_{\overline{x}}\). Thus, for any prime \(\frakp\subset R\), \(f_{\overline{x}}\bmod \frakp\) is irreducible and so \(\kk(C_t)_{\overline{x}}\) is an unramified field extension of \(K\) of degree equal to \(\deg(f_{\overline{x}}) = \deg(f_{\overline{x}}\bmod \mm_R) = \deg(\overline{x})\), as desired. (Alternatively, see Section~\ref{sec:UnramifiedCase}.)

        Now assume that \(e(\overline{x}/\overline{t})>1\). Then \(R\to S\) is not \'etale. By Lemma~\ref{lem:GoodReductionConseq}, there exists a height one prime \(\frakp_0\subset R\) such that: there is a unique prime \(\frakq_0\) lying over \(\frakp_0\), \(R\to S\) is \'etale at all primes not containing \(\frakq_0\), and the maps \(\OO_K\to R/\frakp_0\) and \(R/\frakp_0\to S/\frakq_0\) are \'etale.
        Since \(R/\mm_R = \kk(\overline{t}) = \F\), this implies that \(\OO_K \to R/\frakp_0\) is an isomorphism. Moreover, this implies that there is a unique \(t_0\in B(K)\) such that \(t_0 \bmod \pi\equiv \overline{t}\). Thus,
        we may apply Proposition~\ref{prop:RamifiedLocalPresentation} with \(\frakp = \frakp_a\) the height one prime of \(R\) corresponding to \(t\). Furthermore, we have the following equalities: 
        \[
            \kk(S_{\frakp})  = \kk(C_t)_{\overline{x}},\;
            \deg(\overline{x}) = [S/\mm_S:R/\mm_R],\;
            e(\overline{x}/\overline{t}) = e(\mm_S/\mm_R), \;\textup{and}\;
            v(t, \calB)  = 1 + v(a).
        \]
        Therefore, Proposition~\ref{prop:RamifiedLocalPresentation} directly translates to Theorem~\ref{thm:RamAndInertiaDegrees}.
\qed

\section{Residue fields in a \(1\) dimensional linear system with specified local behavior}\label{sec:Global}

In this section we return to considering nonconstant morphisms \(\psi\colon C \to \PP^1\) of smooth projective geometrically integral curves defined over a number field \(k\). As mentioned in the introduction, there is a Zariski dense set of points \(t\in \PP^1(k)\) where  the fiber \(C_t\) is irreducible, which gives rise to infinitely many number fields \(\kk(C_t)\).  Conversely, the Riemann--Roch theorem tells us that all sufficiently high degree closed points on \(C\) arise as the fiber of some morphism \(\psi\).  (Moreover, if the Jacobian of \(C\) has rank \(0\), then one can strengthen ``all sufficiently high degree'' to ``all but finitely many''~\cite{BELOV}*{Theorem 4.2}; see~\cite{VirayVogt} for an expository treatment.) Thus, it is of interest to know which number fields arise as residue fields of irreducible fibers \(\kk(C_t)\).

The goal of this section is to review how the local results in Section~\ref{sec:CoversOfCurves} come together to give a global result as in Theorem~\ref{thm:SdIntro}. As mentioned in the introduction, these types of arguments appear throughout the literature and can be considered standard. We include complete proofs and statements for the reader's convenience.

In Section~\ref{subsec:preliminaries} we review how spreading out arguments give good reduction away from a finite set of primes and review an extension of Hilbert's irreducibility theorem that allows for the imposition of finitely many local conditions. In Section~\ref{sec:ChebotarevGlobal}, we show that the realizable unramified extensions are uniform across all but finitely many places and the realizable extensions are controlled by the Galois group of the Galois closure of \(\psi\) (Theorem~\ref{thm:ChebotarevMain}).  In Section~\ref{sec:PfSdIntro}, we combine these results with Theorem~\ref{thm:RamAndInertiaDegrees} to prove Theorem~\ref{thm:SdIntro}. This proof also serve as a template for the reader to assemble the general results in order to give a result analogous to Theorem~\ref{thm:SdIntro} for whichever class of morphisms \(C\to \PP^1\) is of interest to them. 

For the reader interested in finding explicit points \(t\in \PP^1(k)\) exhibiting specified local splitting behavior, we close in Section~\ref{sec:ModM} with some results showing that points of bounded height equidistribute among adelic open sets. This informs how far one must search to find a given local splitting type.

\subsection{Preliminaries}\label{subsec:preliminaries}

\begin{lemma}\label{lem:GoodReduction}
    Let \(C\) be a smooth projective geometrically integral curve over a number field \(k\) and let \(\psi\colon C \to \PP^1_k\) be a nonconstant morphism.
    Then there exists a finite set of places \(T\subset \Omega_k\) containing all archimedean places where \(\psi\) has good reduction for all \(v\in \Omega_k\smallsetminus T.\)

    If the Galois closure \(\widehat\psi\colon \widehat C \to \PP^1_k\) of \(\psi\) is geometrically integral, then we may enlarge \(T\) to assume that \(\hat \psi\) also has good reduction for all \(v\in \Omega_k\smallsetminus T.\) Then, for all \(v\in \Omega_k\smallsetminus T\), the special fiber of \(\widehat \calC\) is the Galois closure of the special fiber of \(\calC\to \PP^1_{\OO_v}\) with Galois group \(\Gal(\widehat{C}/\PP^1)\). 
\end{lemma}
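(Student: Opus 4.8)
The plan is to prove both parts by spreading out, which is the standard tool for passing from a generic statement to a statement valid for all but finitely many places. For the first part, I would first choose a model: since $C$ is smooth, projective, and geometrically integral over $k$, a standard spreading-out argument (e.g.\ \cite{PoonenRaPt}*{Theorem 3.2.1} and its consequences) produces a finite set of places $T_0$ containing the archimedean ones and a smooth projective morphism $\calC \to \Spec \OO_{k,T_0}$ of relative dimension $1$ with generic fiber $C$; after localizing at each $v \notin T_0$ this gives Definition~\ref{def:GoodReductionOfCurves}\eqref{gr:smoothmodel}. Similarly, $\psi \colon C \to \PP^1_k$ is a morphism of finite-type $k$-schemes, so after possibly enlarging $T_0$ it extends to a morphism $\Psi \colon \calC \to \PP^1_{\OO_{k,T_0}}$; finiteness and surjectivity are also constructible conditions, so after a further finite enlargement they hold over all residue fields, giving~\eqref{gr:flatmorphism} (the condition $\deg(\psi) < \Char(\F_v)$ excludes only finitely many $v$). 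For~\eqref{gr:etaleramification}, one takes $\calZ$ to be the closure of the ramification locus $Z$ of $\psi$ in $\calC$; since $Z \to \psi(Z)$ and $\psi(Z) \to \Spec k$ are finite morphisms which one can arrange to be \'etale after inverting finitely many primes (the branch locus is a reduced $0$-dimensional scheme over $k$, hence \'etale over $k$, and \'etaleness spreads out), and the \'etale locus of $\Psi$ is open and its complement is a constructible set whose fiberwise behavior stabilizes, enlarging $T_0$ once more to some $T$ secures all three conditions simultaneously for $v \notin T$.

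For the second part, assume $\widehat C$ is geometrically integral. The same spreading-out argument, applied to $\widehat\psi$, enlarges $T$ to a set $T'$ so that $\widehat\psi$ also has good reduction with integral model $\widehat\Psi \colon \widehat\calC \to \PP^1_{\OO_{k,T'}}$. The remaining, and genuinely content-bearing, claim is that for $v \notin T'$ the special fiber $\widehat\psi_0 \colon \widehat\calC_{\F_v} \to \PP^1_{\F_v}$ is the Galois closure of $\psi_0 \colon \calC_{\F_v} \to \PP^1_{\F_v}$, with the same Galois group. The strategy is: first, the generic-fiber Galois group $G := \Gal(\widehat C/\PP^1)$ acts on $\widehat C$ over $\PP^1_k$; since $\widehat\calC$ is the normalization (or, by good reduction, the smooth model) of $\PP^1_{\OO_{k,T'}}$ in the function field $\kk(\widehat C)$, this action extends to a $G$-action on $\widehat\calC$ over $\PP^1_{\OO_{k,T'}}$, and hence on $\widehat\calC_{\F_v}$. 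Second, one checks $|G| = \deg(\widehat\psi) = \deg(\widehat\psi_0)$ (degrees are preserved under good reduction, as finite flat morphisms have locally constant degree), so the $G$-action realizes $\widehat\psi_0$ as a Galois cover with group $G$. Third, one verifies that $\widehat\calC_{\F_v}$ is geometrically integral: this uses that $\widehat C$ is geometrically integral, so its smooth model over $\OO_{k,T'}$ has geometrically integral fibers after possibly enlarging $T'$ once more (geometric integrality of fibers is constructible and holds at the generic point). Finally, one shows $\psi_0$ factors through $\widehat\psi_0$ via the $G_0 := \Gal(\widehat C/C)$ action and that $\widehat\calC_{\F_v}$ is \emph{minimal} with this property, i.e.\ that $\psi_0$ generates $\kk(\widehat\calC_{\F_v})$ over $\kk(\PP^1_{\F_v})$ as a Galois closure: this follows because the function field tower $\kk(\PP^1_k) \subset \kk(C) \subset \kk(\widehat C)$ reduces, place by place for $v\notin T'$, to a tower of the same degrees with $\kk(\widehat C_{\F_v})/\kk(\PP^1_{\F_v})$ Galois of degree $|G|$, and a Galois extension of this degree containing the degree-$d$ subextension $\kk(C_{\F_v})$ must be the Galois closure.

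I expect the main obstacle to be the last point: carefully arguing that the reduction of the Galois closure is \emph{again} a Galois closure (not merely some Galois extension containing the reduction of $C$), and in particular that no "extra collapsing" of the Galois group occurs at good-reduction primes. The clean way to handle this is to track that all the relevant field degrees — $[\kk(C):\kk(\PP^1)] = d$, $[\kk(\widehat C):\kk(\PP^1)] = |G|$, and the intermediate ones — are preserved under reduction (a consequence of flatness and the finiteness built into good reduction, via the rank computation already used in the proof of Lemma~\ref{lem:GoodReductionConseq}), combined with the fact that a Galois cover of $\PP^1_{\F_v}$ of degree $|G|$ admitting a degree-$d$ subcover isomorphic to $\psi_0$, and generated by the conjugates of that subcover, is by definition the Galois closure. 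Once the degree bookkeeping is in place the identification $\Gal(\widehat C_{\F_v}/\PP^1_{\F_v}) \cong \Gal(\widehat C/\PP^1) = G$ is immediate, since a surjection of finite groups of equal order is an isomorphism.
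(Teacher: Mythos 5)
Your first part (spreading out the smooth proper model, the finite morphism, and the \'etaleness of \(\calZ \to \Psi(\calZ) \to \Spec \OO_{k,T}\), with the finitely many places of small residue characteristic thrown into \(T\)) follows essentially the same route as the paper and is fine. The problem is in the second part, at exactly the point you flag as the main obstacle: showing that the special fiber of \(\widehat\Psi\) is the Galois \emph{closure} of the special fiber of \(\Psi\), not merely a Galois cover containing it. Your proposed resolution --- track that the degrees \([\kk(C):\kk(\PP^1)]=d\) and \([\kk(\widehat C):\kk(\PP^1)]=|G|\) are preserved under reduction, and conclude that ``a Galois extension of this degree containing the degree-\(d\) subextension must be the Galois closure'' --- does not work, because that assertion is false in general: a degree \(4\) Galois extension with group \((\Z/2\Z)^2\) contains degree \(2\) subextensions whose Galois closure is the subextension itself, so degree bookkeeping alone can never rule out ``collapsing.'' What is actually needed is the group-theoretic condition that \(\Gal(\widehat{C}_{\F_v}/C_{\F_v})\) contains no nontrivial subgroup normal in \(\Gal(\widehat{C}_{\F_v}/\PP^1_{\F_v})\), and the only way to get it is to identify the special-fiber pair of Galois groups with the generic pair \((G, G_0)\), \(G_0 = \Gal(\widehat C/C)\), for which the condition holds because \(\widehat C\) is the Galois closure of \(\psi\).

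The ingredients for such an identification appear earlier in your sketch but are not assembled into the final step: if you extend the \(G\)-action to \(\widehat\calC\) (legitimate, since \(\widehat\calC\) is the normalization of \(\PP^1_{\OO_{k,T}}\) in \(\kk(\widehat C)\)), you must still argue that the induced action on the geometrically integral special fiber is faithful and that \(\calC_{\F_v}\) is recovered as the quotient by \(G_0\); this can be done using \(|G| = \deg(\widehat\psi) < \Char(\F_v)\) (so that taking invariants commutes with reduction), but it is not addressed. The paper's proof sidesteps the action-extension entirely: smoothness of \(\widehat\calC\) and \(\PP^1_{\OO_v}\) over \(\OO_v\) and integrality of the special fibers mean the generic point of \(\widehat\calC_{\F_v}\) is the unique, unramified prime of the Galois extension \(\kk(\widehat C)/\kk(\PP^1_{k_v})\) above the special fiber of \(\PP^1_{\OO_v}\), so its decomposition group is all of \(G\) and the residue extension \(\kk(\widehat{C}_{\F_v})/\kk(\PP^1_{\F_v})\) is Galois with group \(G\); applying the same argument to \(\widehat\calC \to \calC\) identifies \(\Gal(\widehat{C}_{\F_v}/C_{\F_v})\) with \(G_0\) compatibly, and the ``no nontrivial normal subgroup'' property transfers verbatim. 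Either route can be completed, but as written your reduction of the key point to degree preservation is a genuine gap.
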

\begin{proof}
This is a spreading out argument (see, e.g.,~\cite{PoonenRaPt}*{Section 3.2}). We provide the details in this situation for the reader's convenience.

We first spread out $C$ and \(\widehat C\) to schemes $\calC$ and $\widehat\calC$ over a dense open subscheme $\Spec \OO_{k, T}$ of $\Spec \OO_k$ as in \cite{PoonenRaPt}*{Theorem 3.2.1(i)}, so that the generic fiber  $\calC_k$ is isomorphic to $C$. By~\cite{PoonenRaPt}*{Theorem 3.2.1(ii) and Table 1, page 306-307}, which references~\citelist{\cite{EGA4-4}*{17.7.8(ii)} \cite{EGA4-3}*{9.7.7(iv) and 8.10.5(xii)}}\, we may further enlarge \(T\) and impose that \(\calC\to \Spec \OO_{k, T}\) and \(\widehat\calC\to \Spec \OO_{k, T}\) are smooth and proper
and that the fibers are geometrically integral.

Now we apply~\cite{PoonenRaPt}*{Theorem 3.2.1(iii)} to spread out $\psi$ and \(\widehat\psi\), so, after possibly enlarging \(T\), we obtain morphisms $\widehat\Psi\colon \widehat\calC\rightarrow \PP^1_{\OO_{k, T}}$ and $\Psi\colon \calC\rightarrow \PP^1_{\OO_{k, T}}$ such that \(\widehat\Psi\) factors through \(\Psi\). Further~\cite{PoonenRaPt}*{Theorem 3.2.1(iv) and Table 1, page 306-307}, which references~\cite{EGA4-3}*{11.2.6(ii)}, implies that we may assume that \(\widehat \Psi\) and \(\Psi\) are flat (again, at the expense of possibly enlarging \(T\)). There are only finitely many places of \(k\) lying over the primes bounded by \(\deg(\widehat\psi)\), so after adding these finitely many places to \(T\), we have verified Definition~\ref{def:GoodReductionOfCurves}\eqref{gr:smoothmodel} and~\eqref{gr:flatmorphism}.

Now we turn to verifying Definition~\ref{def:GoodReductionOfCurves}\eqref{gr:etaleramification}. Since there is no nontrivial \'etale cover of $\PP^1_k$ and \(C\) is geometrically integral, the morphism \(\psi\) cannot be \'etale, but it is generically \'etale since \(\Char(k) = 0\). Thus, there is a minimal proper smooth closed subscheme \(Z\subset C\) and \(\widehat Z\subset \widehat C\) such that \(\psi\) and \(\widehat\psi\) are \'etale away from \(Z\) and \(\widehat Z\), respectively. Let \(\calZ\subset \calC\) and \(\widehat\calZ\subset \widehat\calC\) be the Zariski closure of \(Z\) and \(\widehat Z\) respectively. Since each map in \(\calZ \to \Psi(\calZ)\to\Spec \OO_{k,T}\) and \(\widehat\calZ \to \Psi(\widehat\calZ)\to\Spec \OO_{k,T}\) is generically \'etale, as above we may use~\cite{PoonenRaPt}*{Theorem 3.2.1(ii) and Table 1, page 306-307}, which references~\cite{EGA4-4}*{17.7.8(ii)}, to assume (after possibly enlarging \(T\)) that each of the four morphisms \(\calZ \to \Psi(\calZ)\to\Spec \OO_{k,T}\) and \(\widehat\calZ \to  \Psi(\widehat\calZ)\to\Spec \OO_{k,T}\) are \'etale.

It remains to prove the claim about the Galois group. Let \(v\in \Omega_k\smallsetminus T\). We have a finite morphism \(\widehat\Psi \colon \widehat\calC \to \PP^1_{\OO_v}\) whose extension of function fields is Galois. Since the special fiber is geometrically irreducible, the decomposition group of the special fiber is equal to \(\Gal(\widehat{C}/\PP^1_{k_v})\) and similarly, the decomposition group of the special fiber of \(\widehat  \calC \to \calC\) is equal to \(\Gal(\widehat{C}/C)\). Thus, the special fiber of \(\widehat{\Psi}\) is Galois and the Galois group of the special fiber of \(\widehat \calC \to \calC\) contains no nontrivial normal subgroup. Hence, the special fiber of \(\widehat{\Psi}\) is the Galois closure of the special fiber of \(\Psi\), as desired.
\end{proof}

\begin{lemma}[An extension of Hilbert irreducibility]\label{lem:HilbertIrred}
    Let \(S\subset \Omega_k\) be a finite set of places, let \(\psi\colon C\to \PP^1_k\) be a nonconstant morphism of smooth projective geometrically integral curves, and fix a \(t_v\in \PP^1(k_v)\) for each \(v\in S\). 
 Then there exists a \(t\in \PP^1(k)\) such that \(C_t\) is irreducible (and hence \(\kk(C_t)\) is a field) and such that \(\kk(C_t)\otimes_k k_v \simeq \kk(C_{t_v})\) for all \(v\in S\). 
\end{lemma}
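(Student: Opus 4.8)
The plan is to split the assertion into a purely local statement and a purely global one, glued by the form of Hilbert's irreducibility theorem that tolerates finitely many local conditions. The local statement is that, for each \(v\in S\), the isomorphism class of the \'etale \(k_v\)-algebra \(\kk(C_{t'})\) is constant as \(t'\) ranges over a suitable \(v\)-adic neighborhood \(U_v\) of \(t_v\). The global statement is that a thin subset of \(\PP^1(k)\) cannot contain \(\PP^1(k)\cap\prod_{v\in S}U_v\) for nonempty \(v\)-adic opens \(U_v\). (Implicit in the statement is that each \(t_v\notin B(k_v)\): for \(t_v\in B(k_v)\) the algebra \(\kk(C_{t_v})\) is non-reduced, while \(\kk(C_t)\otimes_k k_v\) is \'etale over \(k_v\) whenever \(C_t\) is reduced over \(k\), so the displayed isomorphism could never hold together with ``\(\kk(C_t)\) is a field''.)

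For the local statement, fix \(v\in S\). Since \(t_v\notin B(k_v)\), the base change \(\psi_{k_v}\) is finite \'etale on a Zariski-open neighborhood of \(t_v\), so there is a \(v\)-adic disk \(U_v\subseteq\PP^1(k_v)\smallsetminus B(k_v)\) about \(t_v\) over which \(\psi_{k_v}\) has a finite \'etale integral model. Exactly as in the proof of Proposition~\ref{prop:EtAlgUnramified}, such a finite \'etale \(\OO_v\)-algebra is a product \(\prod_i\OO_{k_v(d_i)}\) with the multidegree \((d_i)\) determined by the special fiber (Hensel's lemma), hence the same for every fiber over \(U_v\). Therefore \(\kk(C_{t'})\cong\kk(C_{t_v})\) as \(k_v\)-algebras for all \(t'\in U_v\); equivalently \(\kk(C_t)\otimes_k k_v\cong\kk(C_{t_v})\) for every \(t\in\PP^1(k)\) whose image in \(\PP^1(k_v)\) lies in \(U_v\).

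For the global statement, let \(\Theta\subseteq\PP^1(k)\) be the union of the finite set \(B(k)\) with the set of \(t\) for which \(C_t\) is reducible. Since \(C\) is geometrically integral and \(\psi\) is dominant, the fiber over the generic point of \(\PP^1\) is irreducible, so reducibility of \(C_t\) forces the Galois group of the fiber at \(t\) to be a proper subgroup of the geometric monodromy group of \(\psi\); hence \(\Theta\) is thin --- this is Hilbert's irreducibility theorem (one may instead apply it to the Galois closure \(\widehat\psi\colon\widehat C\to\PP^1\)). By the strong form of Hilbert irreducibility recalled above (provable, e.g., by comparing the number of points of bounded height in a thin set with the number lying in a product of \(v\)-adic balls), there is a \(t\in\PP^1(k)\smallsetminus\Theta\) whose image in \(\PP^1(k_v)\) lies in \(U_v\) for every \(v\in S\). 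For such a \(t\): since \(t\notin\Theta\), the fiber \(C_t\) is irreducible and \(t\notin B(k)\), so \(C_t\) is reduced, hence integral, hence \(\kk(C_t)\) is a field; and the local statement gives \(\kk(C_t)\otimes_k k_v\cong\kk(C_{t_v})\) for each \(v\in S\). This is the assertion.

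Both ingredients are classical: local constancy of the splitting type is Hensel's lemma, already invoked in Section~\ref{sec:UnramifiedCase}, and thinness of \(\Theta\) is Hilbert's irreducibility theorem. The only step that is not purely formal is the interface between them --- that \(\Theta\) cannot absorb all of \(\PP^1(k)\cap\prod_{v\in S}U_v\) --- i.e., the compatibility of Hilbert irreducibility with weak approximation over a number field. I expect citing this in precisely the form needed to be the main point; the rest is bookkeeping.
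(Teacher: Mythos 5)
Your overall architecture --- local constancy of the splitting type near each \(t_v\), glued to a version of Hilbert irreducibility that tolerates finitely many local conditions --- is sound, and it genuinely differs from the paper, which instead manufactures the irreducibility condition by hand (plain Hilbert irreducibility once, then Chebotarev at auxiliary places, Krasner, and weak approximation on \(\PP^1\)) rather than citing the thin-set-versus-adelic-opens theorem. The genuine gap is in your justification of the local step. From ``\(\psi_{k_v}\) is finite \'etale on a Zariski neighborhood of \(t_v\)'' you infer a \(v\)-adic disk about \(t_v\) over which \(\psi\) admits a finite \'etale \emph{integral} model, and you conclude that \(\kk(C_{t_v})\cong\prod_i k_v(d_i)\) is a product of unramified extensions. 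That inference fails: \'etaleness on the generic fiber controls only the horizontal ramification, not the arithmetic ramification over \(\OO_v\). Concretely, for \(\psi\colon \PP^1\to\PP^1\), \(z\mapsto z^2=t\) over \(\Q_p\) (\(p\) odd) and \(t_v=p\notin B(k_v)=\{0,\infty\}\), one has \(\kk(C_{t_v})=\Q_p(\sqrt{p})\), a ramified extension, and no disk around \(t_v\) carries a finite \'etale \(\OO_v\)-model, since fibers of such a model are products of unramified extensions. This is not a corner case: the lemma is applied in the proof of Theorem~\ref{thm:SdIntro}\eqref{part:RamificationExistenceSd} precisely with a \(t'_v\) whose fiber has a ramified factor (supplied by Theorem~\ref{thm:RamAndInertiaDegrees}\eqref{part:CurveRamIsom}), so an argument that only covers unramified fibers does not suffice for the paper's use of the lemma.

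The local constancy you need is nonetheless true, with no unramifiedness claim, and is exactly what the paper invokes: by Krasner's lemma / continuity of \'etale algebras (\cite{PoonenRaPt}*{Prop.~3.5.74}), the isomorphism class of \(\kk(C_{t'})\otimes\) nothing more than the \(k_v\)-algebra \(\kk(C_{t'})\) is locally constant in \(t'\) on \(\PP^1(k_v)\smallsetminus B(k_v)\). Replacing your Hensel/integral-model paragraph by this citation repairs the proof. Note also that even in the genuinely unramified situation, your phrase ``determined by the special fiber, hence the same for every fiber over \(U_v\)'' requires \(U_v\) to be a residue disk of the model (all points with the same reduction); distinct points of a larger disk reduce to distinct points of the special fiber and can have different Frobenius splitting types, so Proposition~\ref{prop:EtAlgUnramified} does not give constancy on an arbitrary disk avoiding \(B(k_v)\). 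With the local step corrected, your global step --- that a thin set cannot contain \(\PP^1(k)\cap\prod_{v\in S}U_v\) --- is a correct, citable alternative (Serre, Ekedahl) to the paper's more self-contained Chebotarev-plus-weak-approximation route, and your observation that the statement implicitly requires \(t_v\notin B(k_v)\) is consistent with the paper's own Krasner-based proof.
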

\begin{proof}
    Hilbert's irreducibility theorem implies that we can take $t'\in\PP^1(k)$ such that $C_{t'}$ is irreducible. Set \(L := \kk(C_{t'})\), set \(\widehat{L}\) to be the Galois closure of \(L/k\), and set \(G := \Gal(\widehat{L}/k)\).  Fix generators \(g_1, \dots, g_r\in G\). By the Chebotarev density theorem, there are places \(v_1, \dots, v_r\in \Omega_k\smallsetminus S\) such that the Frobenius element of \(w_i|v_i\) in $G$ generates the same subgroup as \(g_i\). By Krasner's Lemma, there exists an open neighborhood \(t'\in U_{v_i}\subset \PP^1(k_{v_i})\) such that \(\kk(C_t)\otimes_k k_{v_i} \simeq \kk(C_{t'})\otimes_k k_{v_i}\) for all \(t\in U_{v_i}\cap \PP^1(k)\)~\cite{PoonenRaPt}*{Prop. 3.5.74}. In particular, since \(g_i\) were chosen to generate \(G\), if \(t\in \PP^1(k) \cap \prod_i U_{v_i}\), then \(\kk(C_t)\) is a field and the Galois group of its Galois closure contains \(G\). Furthermore, for \(v\in S\), Krasner's Lemma implies that there exists an open neighborhood \(t_v\in U_v\subset \PP^1(k_v)\) such that \(\kk(C_t)\otimes_k k_v \simeq \kk(C_{t_v})\) for all \(t\in U_v\cap \PP^1(k)\). Then weak approximation allows us to find $t\in\PP^1(k)\cap \left(\prod_{v\in S} U_v \times \prod_i U_{v_i} \times \prod_{v\notin S\cup T}\PP^1(k_v)\right)$ and thus $C_t$ has the claimed properties.
\end{proof}

\subsection{Uniformity in realizable unramified local splitting behavior}\label{sec:ChebotarevGlobal}

\begin{theorem}[c.f.~\cite{DG2012}*{Theorem 1.2}]\label{thm:ChebotarevMain}
    Let \(C\) be a smooth projective geometrically integral curve over a number field \(k\), let \(\psi\colon C \to \PP^1_k\) be a nonconstant morphism, and let \(B\) denote the branch locus.  Assume that the Galois closure of \(\psi\colon C \to \PP^1\) is geometrically integral.
    Then for all nonarchimedean \(v\in \Omega_k\) with sufficiently large residue field, the isomorphism types of unramified residue fields is completely determined by the Galois group \(G\) of the Galois closure of \(\psi\).  More precisely, there is a bijection 
    \begin{align*}
       \left\{\ct(g) : g \in G\right\} \quad &\leftrightarrow\quad
        \left\{\kk(C_t)\otimes_k k_v : t\in \PP^1(k), t\bmod v \notin B(\F_v)\right\}/\sim,\\
        (d_i)_{i=1}^r \quad \quad \quad&\mapsto \quad\prod_{i=1}^rk_v(d_i).
        \end{align*}   
\end{theorem}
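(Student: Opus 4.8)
The plan is to reduce the global statement to the fiberwise description already obtained over local and finite fields. First I would invoke Lemma~\ref{lem:GoodReduction}: since the Galois closure $\widehat\psi$ is geometrically integral, there is a finite set $T\subset\Omega_k$ (containing the archimedean places, the places of bad reduction, and the places dividing $\deg\widehat\psi$) such that for all $v\notin T$ both $\psi$ and $\widehat\psi$ have good reduction, with integral models $\Psi\colon\calC\to\PP^1_{\OO_v}$ and $\widehat\Psi\colon\widehat\calC\to\PP^1_{\OO_v}$, and moreover the special fiber $\widehat\psi_0\colon\widehat C_0\to\PP^1_{\F_v}$ is the Galois closure of $\psi_0\colon C_0\to\PP^1_{\F_v}$ with Galois group $\Gal(\widehat C_0/\PP^1_{\F_v})\cong\Gal(\widehat C/\PP^1)=G$. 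We enlarge $T$ once more so that ``sufficiently large residue field'' holds, in a sense to be pinned down in the last step; for $v\notin T$ I claim the asserted bijection holds.

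Next I would establish surjectivity of the map $(d_i)\mapsto\prod k_v(d_i)$ onto the set of realized algebras. Fix $v\notin T$ and take any $t\in\PP^1(k)$ with $\bar t:=t\bmod v\notin B(\F_v)$. Then $t$ lies in the étale locus $\calU=\PP^1_{\OO_v}\smallsetminus\calB$, so its specialization $\bar t\in U_0(\F_v)$ where $U_0$ is the smooth locus of $\psi_0$. By Proposition~\ref{prop:EtAlgUnramified} (equivalently the $e(\overline x/\overline t)=1$ case of Theorem~\ref{thm:RamAndInertiaDegrees}), $\kk(C_t)\otimes_k k_v=\kk(C_{t_v})\simeq\prod_{i}k_v(d_i)$ where $(d_i)_{i=1}^r$ records the degrees of the factor fields of $\kk((C_0)_{\bar t})$ over $\F_v$; and by Proposition~\ref{prop:etalealg-nonGal}\eqref{part:Isom} that tuple is precisely $\ct(\sigma_{\bar t})$, the cycle type of a Frobenius element $\sigma_{\bar t}\in c_{\bar t}\subset G$. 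Hence every realized algebra is of the claimed form $\prod k_v(d_i)$ with $(d_i)=\ct(g)$ for some $g\in G$, so the map is well-defined and surjective, and the set $\{\kk(C_t)\otimes_k k_v:\ldots\}/\!\sim$ is contained in the image.

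For injectivity (and hence bijectivity) I would argue that the map is injective on $\{\ct(g):g\in G\}$ simply because a product $\prod_{i=1}^r k_v(d_i)$ determines the multiset $\{d_i\}$: two étale $k_v$-algebras that are products of unramified field extensions are isomorphic iff the multisets of their degrees agree, since for each $d$ there is a \emph{unique} unramified degree-$d$ extension $k_v(d)$. So it only remains to show that \emph{every} cycle type $\ct(g)$, $g\in G$, is actually realized by some rational point $t\in\PP^1(k)$ with $t\bmod v\notin B(\F_v)$. This is where Proposition~\ref{prop:etalealg-nonGal}\eqref{part:EffCheb} enters: for a fixed conjugacy-invariant cycle type $(d_i)$, the proportion of $\bar t\in U_0(\F_v)$ with $\kk((C_0)_{\bar t})\simeq\prod\F_v(d_i)$ equals $\#\{\sigma\in G:\ct(\sigma)=(d_i)\}/|G|+O(|\F_v|^{-1/2})$, with implied constant depending only on $g(\widehat C)$ and $G$. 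Choosing $v\notin T$ with $|\F_v|=q_v$ large enough that this error term is strictly smaller than $\#\{\sigma\in G:\ct(\sigma)=(d_i)\}/|G|\ge 1/|G|$ for \emph{every} cycle type appearing in $G$ (finitely many conditions, uniform in $v$), we conclude that for such $v$ there is at least one $\bar t_0\in U_0(\F_v)$ with the prescribed splitting. Finally, by Hensel's lemma any $\OO_v$-point of $\calU$ reducing to $\bar t_0$ — in particular, lift $\bar t_0$ to $t_v\in\PP^1(\OO_v)\smallsetminus\calB(\OO_v)$ — gives $\kk(C_{t_v})\simeq\prod k_v(d_i)$, and then Lemma~\ref{lem:HilbertIrred} (Hilbert irreducibility plus weak approximation) produces a genuine $t\in\PP^1(k)$ with $\kk(C_t)\otimes_k k_v\simeq\kk(C_{t_v})$; enlarging $T$ if needed to ensure such $t$ also avoids $B$ at $v$ (automatic since $t_v\notin\calB$). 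This realizes $\ct(g)$ and completes the bijection.

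The main obstacle is the uniformity in the last step: one must make the ``sufficiently large residue field'' threshold independent of the cycle type, which requires that the Effective Chebotarev error term in Proposition~\ref{prop:etalealg-nonGal}\eqref{part:EffCheb} be controlled by a constant depending only on $g(\widehat C)$ and $G$ (not on $v$), so that finitely many inequalities $O(q_v^{-1/2})<1/|G|$ cut out a cofinite set of places. Everything else — well-definedness, surjectivity, and the triviality of injectivity — is formal once Propositions~\ref{prop:EtAlgUnramified} and~\ref{prop:etalealg-nonGal} and Lemmas~\ref{lem:GoodReduction} and~\ref{lem:HilbertIrred} are in hand.
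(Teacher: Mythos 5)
Your proposal is correct and follows essentially the same route as the paper: good reduction via Lemma~\ref{lem:GoodReduction}, the effective Chebotarev bound of Proposition~\ref{prop:EffCheb}\eqref{part:EffCheb} (uniform in $v$ since the constant depends only on $g(\widehat C)$ and $G$) to realize every cycle type over the residue field, Proposition~\ref{prop:EtAlgUnramified} to lift to $k_v$-points, Lemma~\ref{lem:HilbertIrred} to globalize, and the Frobenius cycle-type computation of Proposition~\ref{prop:etalealg-nonGal} for the converse containment. Your explicit remarks on injectivity of the map and on arranging $t\bmod v\notin B(\F_v)$ are points the paper leaves implicit, but they do not change the argument.
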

\begin{remarks}\hfill
\begin{enumerate}
    \item This theorem is essentially~\cite{DG2012}*{Theorem 1.2} combined with Proposition~\ref{prop:etalealg-nonGal}. However, as the proof is fairly short given everything we have proved thus far and citing~\cite{DG2012}*{Theorem 1.2} directly would require verifying that our assumptions imply their assumptions, we elect to give a direct proof.
    \item If the Galois closure of \(\psi\colon C \to \PP^1_k\) gives a smooth projective curve \(\widehat{C}\) that is not geometrically integral, then \(\widehat{C}_{k_v}\) may be reducible and the Galois closure of \(\psi\colon C_{k_v} \to \PP^1_{k_v}\) may be a proper subcover of \(\widehat{C}_{k_v}\). In this case, one must instead consider the Galois group of the special fiber. Since \(\overline{k}\cap \kk(\widehat{C})\) is a finite extension of \(k\), there are only finitely many possibilities for the Galois group of the special fiber, and one can use the proof of Theorem~\ref{thm:ChebotarevMain} for each of these possibilities to obtain an analogous result.
    \item The required lower bound on the size of the residue field comes from the effective Chebotarev Density Theorem (see Proposition~\ref{prop:EffCheb}\eqref{part:EffCheb}). As such, it can be made effective, and the explicit bound can be deduced from the proof. 
    \item The bijection in the theorem can be used both ways. For a single place with sufficiently large (but not too large) residue field, one can directly compute the set on the right hand side, from which one can deduce the set on the left hand side. Then one can use the bijection in the reverse direction to deduce splitting field behavior for all places with sufficiently large residue fields.
    \end{enumerate}
\end{remarks}

\begin{proof}
    Let $\widehat{\psi}\colon \widehat{C} \to \PP^1$ be the Galois closure of $\psi$, $G$ denote $\Gal(\widehat{C}/\PP^1) \leq S_d$, and $\widehat{g}$ denote the genus of $\widehat{C}$. By Lemma~\ref{lem:GoodReduction}, there exists a finite set of places $T \subset \Omega_k$ and morphisms $\widehat{\calC} \to \calC \to \PP^1_{\calO_{k,T}}$ of schemes that are smooth over $\calO_{k,T}$ such that the special fibers $\widehat{\calC}_k$ and $\calC_k$ are isomorphic to $\widehat{C}$ and $C$ respectively; and moreover, for all nonarchimedean $v \not \in T$, the morphisms $\Psi\colon\calC \to \PP^1$ and $\widehat{\Psi}\colon\widehat{\calC} \to \PP^1$ both have good reduction at $v$. By possibly enlarging $T$, we may assume that the \(O\) term in Proposition~\ref{prop:EffCheb}\eqref{part:EffCheb} is small enough to guarantee the existence of
    a point $\overline{t}_v \in (\PP^1\smallsetminus B)(\F_v)$ such that 
    \[
    \kk(\calC_{\overline{t}_v})\simeq \prod_{i=1}^r \F_v(d_i), \quad \text{where }\ct(c) = (d_i)_{i=1}^r
    \]
    for all conjugacy classes \(c\) of elements in \(G\) and all \(v\notin T\).\footnote{In fact, using the full statement of the Effective Chebotarev Density Theorem~\cite{Murty-Scherk}*{Theorem 1}, one can show that taking \(T\) so that \(q_v > \deg(B) - 1 + 4[(\widehat{g}+|G|)\sqrt{q_v} + |G|(\widehat{g}-1+|G|)]\) for all \(v\notin T\) is sufficient.} So, by Proposition~\ref{prop:EtAlgUnramified}, there exists a $t_v \in \PP^1(k_v)$ such that $\kk(\calC_{t_v}) \simeq \prod_{i=1}^r k_v(d_i)$, so the map in the theorem is well-defined by Lemma~\ref{lem:HilbertIrred}.

    On the other hand, if $t\in \PP^1(k)$ such that $t \bmod v \not\in B(\F_v)$, then $t\bmod v$ is not in the branch locus of $\Psi_{\F_v}$, so by Propositions~\ref{prop:EtAlgUnramified} and \ref{prop:etalealg-nonGal}, $\kk(C_t) \otimes_k k_v\simeq \prod_{i=1}^r k_v(d_i)$ where $(d_i)_{i=1}^r$ is the cycle type of the Frobenius conjugacy class at $t\bmod v$, which is a conjugacy class of $G$. So we see that the map in the theorem is a bijection.
\end{proof}

\subsection{Proof of Theorem~\ref{thm:SdIntro}}\label{sec:PfSdIntro} Let \(\widehat \psi\colon \widehat C\to \PP^1\) be the Galois closure of \(\psi\) and let \(G=\Gal(\widehat{C}/\PP^1) < S_d\). By assumption on \(\psi\), the Galois group of the Galois closure of \(\psi_{\kbar}\colon C_{\kbar}\to \PP^1_{\kbar}\) is generated by transpositions and is transitive, so must be \(S_d\) (see Lemma~\ref{lem:TransitiveTranspositions} below for details). Furthermore, \(G\) must contain the geometric Galois group of \(G\) must also be \(S_d\). In particular, \(\widehat{C}\) is geometrically integral, and therefore \eqref{part:InertiaSd} follows by Theorem~\ref{thm:ChebotarevMain}.

    By Lemma~\ref{lem:GoodReduction}, there exists a finite set of places \(T\subset \Omega_k\) such that for all nonarchimedean \(v\notin T\), the morphisms \(C\to \PP^1_k\) and \(\widehat{C}\to \PP^1_k\) have good reduction. Every nonarchimedean place with sufficiently large residue field is outside \(T\), thus we may assume that \(\psi\) has good reduction at \(v\). Let $\Psi\colon \calC \to \PP^1_{\OO_v}$ be an integral model as in Definition~\ref{def:GoodReductionOfCurves}.

    \textbf{\eqref{part:RamificationUpperBoundSd}} Let \(t\in \PP^1(k)\) be such that \(\kk(C_t)\) is ramified at \(v\), and let \(\overline{t}\in \PP^1(\F_v)\) be the specialization of \(t_v\). Then by Theorem~\ref{thm:RamAndInertiaDegrees}, any maximal subfield \(L\) of \(\kk(\calC_{t_v})\) has ramification degree at most \(e(\overline{x}/\overline{t})\) for some \(\overline{x}\in \calC_{\overline{t}}\). By assumption, \(e(\overline{x}/\overline{t}) \leq 2\), so \(e(L/k_v)\leq 2\) for all maximal subfields \(L\subset \kk(C_t)\otimes_k k_v\). 
    It remains to show that the number of ramified places is bounded above by 
        \[
            \max\left\{ \#\{x \in C_t : e(x/t) > 1\} : t\in \PP^1(k_v)\ \mbox{a branch point}\right\}.
        \]
Recall that the \'etale algebra \(\kk(C_t)\otimes_k k_v\) decomposes as a product:
    \[
    \kk(C_t)\otimes_k k_v= \kk(\calC_{t_v})=\prod_{\overline{x}\in \calC_{\overline{t}}} \kk(\calC_{t_v})_{\overline{x}}, \quad\textup{where}\quad \kk(\calC_{t_v})_{\overline{x}} := \prod_{\substack{x\in (\calC_{t_v})_{k_v}\\x\rightsquigarrow \overline{x}}}\kk(x).
    \]
    Theorem~\ref{thm:RamAndInertiaDegrees}\eqref{part:Ramification} says that any maximal subfield \(L\) of \(\kk(\calC_{t_v})_{\overline{x}}\) has ramification index bounded by \(e(\overline{x}/\overline{t})\), and furthermore if the ramification index of \(L/k_v\) is equal to \(e(\overline{x}/\overline{t})\), then \(L = \kk(\calC_{t_v})_{\overline{x}}\). The assumptions of Theorem~\ref{thm:SdIntro} imply that \(e(\overline{x}/\overline{t})\leq 2\). Thus if \(w|v\) is ramified in \(\kk(C_t)\) then \(\kk(C_t)_w \simeq \kk(\calC_{t_v})_{\overline{x}}\) for a unique \(\overline{x}\) with \(e(\overline{x}/\overline{t}) = 2\).
    
    \textbf{\eqref{part:RamificationExistenceSd}} Assume that \(\psi\) has a \(k_v\)-rational branch point \(t_v\in \PP^1(k_v)\), that there exists an odd degree ramification point \(x_v\in C\) lying above \(t_v\), and let \(L/\kk(x_v)\) be a quadratic ramified extension. By the assumptions on \(\psi\), \(e(x_v/t_v)=2\) and since \(\psi\) has good reduction the same is true for the ramification index and degree of the specializations \(\overline{x},\overline{t}\) of \(x_v, t_v\), respectively. Since \(2=e(\overline{x}/\overline{t})\) is relatively prime to \(\deg(x_v) = \deg(\overline{x})\), Theorem~\ref{thm:RamAndInertiaDegrees}\eqref{part:CurveExistence} implies that there exists a \(t'_v\in \PP^1(k_v)\) such that \(\kk(\calC_{t'_v})_{\overline{x}}\simeq L\). Thus Lemma~\ref{lem:HilbertIrred} applied to \(S = \{v\}\) gives the desired result.
    \qed
    
    \begin{lemma}\label{lem:TransitiveTranspositions}
        The group \(S_d\) has no proper transitive subgroups generated by transpositions.
    \end{lemma}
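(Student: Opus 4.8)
The plan is to translate this group-theoretic assertion into a statement about graphs. Given a transitive subgroup $G \leq S_d$ that is generated by some set $T$ of transpositions, I would form the graph $\Gamma$ on vertex set $\{1, \dots, d\}$ with an edge joining $i$ and $j$ precisely when the transposition $(i\,j)$ belongs to $T$. The first step is to observe that the orbits of $G$ on $\{1,\dots,d\}$ coincide with the connected components of $\Gamma$: an edge $\{i,j\}$ forces $i$ and $j$ into a common orbit, and conversely each generator in $T$ (hence every element of $G$, being a product of such) moves a point only within its own connected component. Since $G$ is transitive, this shows $\Gamma$ is connected.

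The second step is to prove that the transpositions indexed by the edges of \emph{any} connected graph on $\{1,\dots,d\}$ already generate all of $S_d$. Enlarging the generating set of transpositions only enlarges the group, so it suffices to treat the case where the relevant edge set is that of a spanning tree of $\Gamma$. I would argue by induction on $d$, the case $d=1$ being trivial. A tree on $d \geq 2$ vertices has a leaf $v$ with a unique neighbour $w$; deleting $v$ leaves a tree on the remaining $d-1$ vertices, whose edge transpositions generate the symmetric group on those vertices by the inductive hypothesis. For any vertex $u \neq v, w$, the conjugate $(w\,u)(v\,w)(w\,u) = (v\,u)$ lies in the subgroup generated, so every transposition of the form $(v\,u)$ is available; together with the symmetric group on $\{1,\dots,d\}\setminus\{v\}$ these generate all of $S_d$.

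Combining the two steps, $G$ contains the subgroup generated by the edge transpositions of a spanning tree of the connected graph $\Gamma$, and that subgroup is all of $S_d$; hence $G = S_d$, so there is no proper transitive subgroup of $S_d$ generated by transpositions. The only point requiring care — really the ``main obstacle,'' though it is a routine one — is remembering that ``generated by transpositions'' means generated by \emph{some} set of transpositions, not necessarily every transposition present, so that passing to a spanning tree is legitimate; once that is kept in mind, the argument is entirely elementary.
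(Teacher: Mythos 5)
Your proof is correct and is essentially the same argument as the paper's: the paper's key step, producing a chain of transpositions $(a,i_1),(i_1,i_2),\dots,(i_{n-1},b)$ in $H$ and composing along it, is exactly your graph-connectivity observation, and both proofs then finish by an induction on $d$ (the paper via $\Stab_H(d)\simeq S_{d-1}$ plus a transposition moving $d$, you via leaf removal in a spanning tree). Your version merely makes the transposition graph and the purely combinatorial lemma (edges of a connected graph generate $S_d$) explicit, which is a repackaging rather than a different route.
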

    \begin{proof}
        We prove by induction on $d$. The case when $d=1$ is trivial.
        Let $H$ be a transitive subgroup of $S_d$ generated by transpositions. Let $a,b \in \{1, \ldots, d-1\}$. Because $H$ is transitive and generated by transpositions, $H$ contains a sequence of transpositions $(a,i_1), (i_1, i_2), \ldots, (i_{n-1}, b)$, and hence we have $(a,b)\in H$. So $\Stab_H(d)$ is generated by transpositions of the set $\{1, \ldots, d-1\}$ by our assumption, and acts transitively on \(\{1,..., d-1\}\). Thus, by the induction hypothesis, $\Stab_H(d) = \Stab_{S_d}(d)\simeq S_{d-1}$. Then we have $H=S_d$, since $H$ must contain a transposition moving $d$.    
    \end{proof}

    \subsection{Modulo \(m\) reductions of points of bounded height}\label{sec:ModM}

        We restrict to the case \(k = \Q\). The isomorphism class of \(\Q_p\)-algebras \(\kk(C_t)\otimes_{\Q} \Q_p\) is controlled by the image of \(t\in \PP^1(\Z/p^r\Z)\) for some \(r\). Thus, it is natural to ask how points of bounded height distribute over the congruence classes modulo \(p^r\), or, more generally, modulo \(m\). 
        
        In this section, we modify standard number theoretic arguments to answer this question. In Lemma~\ref{lem:LowHeight}, we determine the minimal \(N\) such that every modulo \(m\) congruence class can be realized by a point of height at most \(N\). In Proposition~\ref{prop:equidistribution}, we show that as the height grows, the points of bounded height equidistribute over the congruence classes.
        
        \begin{lemma}\label{lem:LowHeight}
        For any positive integer \(m\), the map
            \[
                \{x \in \PP^1(\Q) : H(x) \leq N\} \to \PP^1(\Z/m\Z)
            \]
            is surjective if \(N\geq \max\left\{\lfloor\sqrt{m}\rfloor,\ \frac{m}{\min\{p\text{ prime}\,:\,p\mid m\}}\right\}\) and injective if \(N<\sqrt{m/2}\).  Moreover, these bounds are sharp: 
            \begin{enumerate}
                \item If \(p = n^2 + 1\) is prime, then \([n:1]\in \PP^1(\F_p)\) is not the image of any point \(x\in \PP^1(\Q)\) with \(H(x) < n = \lfloor \sqrt{p}\rfloor\).\label{eq:sharp-surj-prime}
                \item If \(m = 2m_0\), then \([m_0 : 1]\in \PP^1(\Z/m\Z)\) is not the image of any point \(x\in \PP^1(\Q)\) with \(H(x) < m_0 = \frac{m}{2}\).\label{eq:sharp-surj-composite}
                \item If \(p = (n-1)^2 + n^2\) is prime, then \(x := [n:1-n]\) and \(x' := [n-1:n]\) have height \(n = \lceil\sqrt{p/2}\rceil\) and \(x\equiv x'\bmod p\).\label{eq:sharp-inj}
            \end{enumerate}
        \end{lemma}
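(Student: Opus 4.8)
The plan is to treat the three assertions — the sharp sufficient condition for surjectivity, the sharp necessary condition for injectivity, and the three sharpness examples — essentially separately, with almost all the effort going into the surjectivity bound.

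\textbf{Injectivity and its sharpness.} This is the short part. Writing $x=[a:b]$ and $x'=[a':b']$ with coprime integer coordinates, if $H(x),H(x')\le N$ and $x,x'$ have the same image in $\PP^1(\Z/m\Z)$, then $(a,b)\equiv u\,(a',b')\pmod m$ for a unit $u\in(\Z/m\Z)^\times$, so $m\mid ab'-a'b$; since $|ab'-a'b|\le 2N^2$, the hypothesis $N<\sqrt{m/2}$ forces $ab'=a'b$, i.e.\ $x=x'$. For part~\eqref{eq:sharp-inj}: when $p=(n-1)^2+n^2$ is prime, both $[n:1-n]$ and $[n-1:n]$ have coprime coordinates and height $n$, a direct estimate gives $n-1<\sqrt{p/2}<n$ so $n=\lceil\sqrt{p/2}\rceil$, and $n^2\equiv -(n-1)^2\pmod p$ yields $n(n-1)^{-1}\cdot(n-1,n)\equiv(n,1-n)\pmod p$, so the two points have the same image mod $p$, while $n\cdot n\ne(1-n)(n-1)$ shows they are distinct in $\PP^1(\Q)$.

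\textbf{Surjectivity, reformulated as a lattice problem.} For $P=[\bar a:\bar b]\in\PP^1(\Z/m\Z)$ put $\Lambda_P:=\{(u,v)\in\Z^2:\bar b u-\bar a v\equiv 0\pmod m\}$. Since $\gcd(\bar a,\bar b,m)=1$, this is an index-$m$ sublattice of $\Z^2$ with cyclic quotient, and conversely every such sublattice is some $\Lambda_P$. Working prime by prime, I would check that for $(x,y)\in\Z^2$ with $\gcd(x,y)=1$ the point $[x:y]$ reduces mod $m$ to $P$ \emph{iff} $(x,y)\in\Lambda_P$ — here coprimality of $(x,y)$ is exactly what forces the proportionality constant to $(\bar a,\bar b)$ to be a unit modulo each $p\mid m$. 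Thus, writing $\|(x,y)\|_\infty=\max(|x|,|y|)$, surjectivity of the reduction map on points of height $\le N$ is equivalent to: every index-$m$ sublattice $\Lambda\subset\Z^2$ with cyclic quotient contains a \emph{primitive} vector of $\sup$-norm at most $N=\max\{\lfloor\sqrt m\rfloor,\;m/q\}$, where $q$ is the least prime factor of $m$. To prove this, let $v_1$ be a shortest nonzero vector of $\Lambda$. Minkowski's convex body theorem for the box $[-\sqrt m,\sqrt m]^2$, whose volume $4m$ equals $2^2\cdot\mathrm{covol}(\Lambda)$, gives $\|v_1\|_\infty\le\lfloor\sqrt m\rfloor$. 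If $v_1$ is primitive we are done. Otherwise, since $\Z^2\to\Z^2/\Lambda$ is $p$-divisible for $p\nmid m$, minimality rules out $v_1\in p\Z^2$ for $p\nmid m$; hence $v_1=Dw$ with $w$ primitive and $D=\prod_{p\in S}p$ for a nonempty $S\subseteq\{p:p\mid m\}$, so $1<D<m$, whence $D\ge q$ and $\|v_1\|_\infty=D\|w\|_\infty\ge q$.

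\textbf{The main obstacle.} It remains to produce a primitive vector of $\sup$-norm $\le m/q$ once $v_1$ is imprimitive, and this I expect to be the hard part — it is precisely where the $m/q$ term (rather than $\sqrt m$) becomes necessary. Passing to the second successive minimum $v_2$ (so $\{v_1,v_2\}$ is a basis of $\Lambda$, being two-dimensional, and $\|v_1\|_\infty\|v_2\|_\infty\le m$ by Minkowski's second theorem), one gets $\|v_2\|_\infty\le m/\|v_1\|_\infty\le m/q$, and I would extract the desired primitive vector from $v_2$ and the small combinations $v_1+cv_2$, using that cyclicity of $\Z^2/\Lambda$ forces $\gcd(D,D')=1$ if $v_2=D'w'$ is also imprimitive (this pins down the reductions of $v_1,v_2$ modulo the primes dividing $m$). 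The extreme case to keep in mind is $\|v_1\|_\infty=q$, $\|v_2\|_\infty=m/q$, in which $v_1=q\cdot(\text{unit vector})$ and, after a coordinate change making that unit vector $(1,0)$, $\Lambda$ is spanned by $(q,0)$ and $(\alpha,m/q)$ with $0\le\alpha<q$; here one needs $x$ with $\gcd(xq+\alpha,\ m/q)=1$ and $|xq+\alpha|\le m/q$, found by an elementary sieve using that every prime of $m/q$ is $\ge q$ and that $m$ is composite (the prime case being $N=\lfloor\sqrt m\rfloor$, handled by the Minkowski step alone).

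\textbf{Sharpness of surjectivity.} Parts~\eqref{eq:sharp-surj-prime} and~\eqref{eq:sharp-surj-composite} follow directly from the reformulation. For~\eqref{eq:sharp-surj-prime}, with $p=n^2+1$ prime: any lift $[a:b]$ of $[n:1]$ satisfies $a\equiv nb\pmod p$, and $|a|,|b|\le n-1$ gives $|a-nb|\le n^2-1<p$, so $a=nb$; then $|a|=n|b|\le n-1$ forces $a=b=0$, not a point. For~\eqref{eq:sharp-surj-composite}, with $m=2m_0$: any lift $[a:b]$ of $[m_0:1]$ satisfies $(a,b)\equiv t(m_0,1)\pmod m$ with $t$ a unit, so $b$ is odd and $a\equiv m_0b\equiv m_0\pmod{2m_0}$ (as $m_0(b-1)$ is a multiple of $2m_0$), hence $|a|\ge m_0$.
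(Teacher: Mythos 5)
Your injectivity argument, the three sharpness items, and the reformulation of surjectivity are all fine: the identification of lifts of \(P=[\bar a:\bar b]\) with primitive vectors of the index-\(m\) sublattice \(\Lambda_P=\{(u,v):\bar b u-\bar a v\equiv 0\ (\mathrm{mod}\ m)\}\) is correct, Minkowski gives \(\|v_1\|_\infty\le\lfloor\sqrt m\rfloor\), and your observation that an imprimitive shortest vector has content supported on primes dividing \(m\) (so \(\|v_1\|_\infty\ge q\)) correctly disposes of the case \(m\) prime. (Minor point: that content need not be squarefree, but you never use squarefreeness.)

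The heart of the lemma, however --- surjectivity with the bound \(\max\{\lfloor\sqrt m\rfloor,\,m/q\}\) when \(v_1\) is imprimitive --- is not actually proved; you explicitly defer it ("I would extract\dots", "found by an elementary sieve"), and the outline has concrete defects. First, if the second minimum \(v_2\) is also imprimitive, the combinations \(v_1+cv_2\) you propose can have sup-norm as large as \(q+m/q>m/q\), so even for \(c=\pm1\) the required bound is lost, and primitivity of such a combination can still fail at primes not dividing the two contents; nothing in the sketch controls both issues simultaneously. Second, the reduction to the "extreme case" \(\Lambda=\langle(q,0),(\alpha,m/q)\rangle\) is illegitimate: a \(\GL_2(\Z)\) coordinate change does not preserve the sup-norm, and when \(\|v_1\|_\infty=q\) the primitive part of \(v_1\) need not be a coordinate vector (it can be \((\pm1,\pm1)\)); also no argument is given that only this case needs attention. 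Third, the sieve step is exactly where the content of the \(m/q\) term lies and is not carried out. For comparison, the paper avoids these difficulties by working with an explicit representative \([uw:v]\) of the target point with \(u,v\mid m\), \(\gcd(u,v)=\gcd(w,m)=1\), \(v<\sqrt m\), applying Minkowski to the \emph{asymmetric} boxes \(|a|\le \frac{m}{v\lambda}\), \(|b|\le v\lambda\) in the lattice \(\Z\cdot(uw,v)+(m\Z)^2\), and then using the congruences \(s\equiv guw\), \(t\equiv gv\ (\mathrm{mod}\ m)\) together with \(|t|<v\cdot\min\{p:p\mid \frac m v\}\) to verify directly that the short vector found is a coprime lift; it is this interplay between the shape of the box and the divisor structure of \(v\) that yields the \(m/\min\{p\mid m\}\) term, and it is the ingredient your plan still needs to supply.
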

        \begin{proof}
        Surjectivity: The points \(0, \infty\in \PP^1(\Z/m\Z)\) are evidently the reduction of \(0\) and \(\infty\) in \(\PP^1(\Z)\).
        Given a point in \(\PP^1(\Z/m\Z)\smallsetminus \{0,\infty\}\), we may find a representative $[uw:v]$, where $u,v,w\in\{1,\dots,m-1\}$, $u,v\mid m$ and $\gcd(u,v)=\gcd(w,m)=1$. By symmetry, we may assume that \(v<u\), which implies that \(v<\sqrt{m}\).
        Consider the lattice $L:=\Z\cdot(uw,v) + (m\Z)^2\subset \Z^2$. 
        Since $\frac{m}{v}(uw,v)-(0,m)=(\frac{m}{v}uw,0)$ and $\gcd(\frac{m}{v}u,m)=\frac{m}{v}\gcd(u,v)=\frac{m}{v}$, the lattice $L$ is generated by $(uw,v)$ and $(\frac{m}{v},0)$, and so $\det L=m$.
        
        For \(\lambda\in \RR_{>0}\), Minkowski’s convex body theorem \cite{CasselsGoN}*{p.\,71, Theorem II} implies that the region 
        \[\mathcal{R}_{\lambda}=\left\{(a,b)\in\RR^2:|a|\leq \frac{m}{v\lambda},\ |b|\leq v\lambda\right\}\]
        contains a non-zero vector $({s_{\lambda},t_{\lambda}})\in L$. 

        Since \(\det L=m\), we may assume that \(\gcd(s_{\lambda}, t_{\lambda}) \mid m\). We will show that if \(1<\lambda<\min\{\frac{\sqrt{m}}{v},\ p:p\mid \frac{m}{v}\}\), then \(\gcd(t_{\lambda}, m) = v\) and \(\gcd(s_{\lambda}, t_{\lambda}) = 1\).
        By the definition of $L$, we have $s_{\lambda}\equiv guw\pmod m$  and $t_{\lambda}\equiv gv\pmod m$ for some integer $g$.
        In particular, $v\mid t_{\lambda}$. Further, since $|t_{\lambda}|\leq v\lambda<v\min\{ p:p\mid \frac{m}{v}\}$, we deduce that either $t_{\lambda}=0$ or $\gcd(t_{\lambda},m)=v$, in which case $\gcd(g,m)=1$ and \(\gcd(s_{\lambda}, t_{\lambda}, m) = \gcd(uw, v) = 1\). If $t_{\lambda}=0$ then $\frac{m}{v}$ must divide $g$ and hence also $s_{\lambda}$, but $|s_{\lambda}|\leq\frac{m}{v\lambda} < \frac{m}{v}$ forces $s_{\lambda}=t_{\lambda}=0$, a contradiction.
        
        The assumption $\lambda<\frac{\sqrt{m}}{v}$ further implies that $\frac{m}{v\lambda}>v\lambda$, so $\max\{|s_{\lambda}|,|t_{\lambda}|\}\leq \max\{\frac{m}{\lambda v}, v\lambda\} = \frac{m}{v\lambda}$. Thus, as \(\lambda\) increases towards this bound, the height of \((s_{\lambda}, t_{\lambda})\) is bounded above by \(\frac{m}{v\lambda}\) which decreases to \(\max\{\sqrt{m}, \frac{m}{v\min\{p: p |\frac{m}{v}\}}\}\). Since there can only be finitely many points on the lattice with bounded height, 
        this limiting argument yields
        a non-zero vector $(s,t)\in L$ that satisfies
       \[\max\{|s|,|t|\}\leq \max\left\{\sqrt{m},\ \frac{m}{v\min\{p:p\mid \frac{m}{v}\}}\right\}\leq \max\left\{\sqrt{m},\ \frac{m}{\min\{p:p\mid m\}}\right\}.
       \]
       Since $\max\{|s|,|t|\}\in\Z$, we can replace the upper bound by its floor.
        
        Injectivity:
        Suppose $x,x'\in \PP^1(\Q)$, satisfying $H(x),H(x')<\sqrt{m/2}$, reduce to the same point on $\PP^1(\Z/m\Z)$ that is not $0$ or $\infty$. Write $x=[a:b]$ and $x'=[c:d]$, where $a,b,c,d\in\Z$ all have absolute value less than $\sqrt{m/2}$, satisfy $\gcd(a,b)=\gcd(c,d)=1$ and $b,d\geq 1$, and such that  $[a:b]\equiv [c:d]\bmod m$.
        Then $m\mid ad-bc$, but by the height bound, $|ad-bc|\leq |ad|+|bc|<m$. This forces $ad-bc=0$. The coprimality condition implies that $(a,b)=\pm (c,d)$. Since $b,d$ are both positive, it must be that $(a,b)=(c,d)$, hence $x=x'$.

        {Sharpness: The claims in~\eqref{eq:sharp-surj-composite} and~\eqref{eq:sharp-inj} are straightforward.} It remains to show~\eqref{eq:sharp-surj-prime}. If \(a,b\in \Z\) are integers with absolute value at most \(n-1\), then \(|nb - a| \leq n(n-1) + (n-1) = n^2 - 1\).  So if \(p = n^2 + 1\) is a prime, then \(\det\left(\begin{bmatrix}n& 1\\a &b \end{bmatrix}\right) = nb - a\) is nonzero if \(ab\neq 0\). Thus, there is no point with height at most \(n-1\) that maps to the point \([n:1]\in \PP^1(\F_p)\).
        \end{proof}

        \begin{prop}\label{prop:equidistribution}
            Fix a positive integer $m$ and a point \({[u:v]}\in \PP^1(\Z/m\Z)\). Then
            \[
           \#\{x\in \PP^1(\Q) : H(x)\leq N, \; x\bmod m = {[u:v]}\} =\frac{12N^2}{\pi^2 m}\prod_{p\mid m}\frac{p}{p+1}+O\left(N\log N\right).
            \]
        \end{prop}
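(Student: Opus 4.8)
The plan is a standard lattice‑point count with a Möbius sieve, organized around the identification of $\PP^1(\Q)$‑points of bounded height with primitive integer vectors. First I would record the reduction dictionary: a point $x\in \PP^1(\Q)$ with $H(x)\le N$ has a representative $[a:b]$ with $(a,b)\in \Z^2$ primitive and $\max(|a|,|b|)\le N$, unique up to the fixed‑point‑free sign involution $(a,b)\mapsto(-a,-b)$ on nonzero vectors. Next, since each $\Z/p^e\Z$ is local, a point of $\PP^1(\Z/p^e\Z)$ is a unimodular row modulo units, and by the Chinese Remainder Theorem the same holds over $\Z/m\Z$; hence, fixing a unimodular integer representative $(u,v)$ of the given point, we have $x\bmod m=[u:v]$ if and only if $(a,b)\equiv \lambda(u,v)\pmod m$ for some $\lambda\in(\Z/m\Z)^\times$. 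Using $\gcd(u,v,m)=1$ one checks that distinct $\lambda$ give distinct residue classes mod $m$, so that
\[
\#\{x\in\PP^1(\Q): H(x)\le N,\ x\bmod m=[u:v]\}=\frac12\sum_{\lambda\in(\Z/m\Z)^\times}\#P_\lambda(N),
\]
where $P_\lambda(N)$ is the set of primitive $(a,b)\in\Z^2\smallsetminus\{0\}$ with $\max(|a|,|b|)\le N$ and $(a,b)\equiv\lambda(u,v)\pmod m$ (these sets being pairwise disjoint).

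Then I would estimate $\#P_\lambda(N)$ by Möbius inversion over $d\mid\gcd(a,b)$. Only $d$ coprime to $m$ contribute: a common prime factor of $a,b$ dividing $m$ would force $u\equiv v\equiv 0$ modulo that prime, contradicting $\gcd(u,v,m)=1$. For such $d$, the conditions ``$d\mid a,\ d\mid b$'' and ``$(a,b)\equiv\lambda(u,v)\bmod m$'' combine, via CRT, into a single congruence for $a$ and one for $b$ modulo $dm$, and counting integers in $[-N,N]$ in a fixed class modulo $dm$ gives $\frac{2N}{dm}+O(1)$ per coordinate. Hence
\[
\#P_\lambda(N)=\sum_{\substack{d\ge 1\\ \gcd(d,m)=1}}\mu(d)\left(\frac{2N}{dm}+O(1)\right)^2=\frac{4N^2}{m^2}\sum_{\gcd(d,m)=1}\frac{\mu(d)}{d^2}+O(N\log N),
\]
the error absorbing the cross terms $\sum_{d\ll N/m}O\!\left(\tfrac{N}{dm}\right)$, the $O(1)$‑term sum of size $O(N/m)$, and the tail $\tfrac{4N^2}{m^2}\sum_{d>2N/m}d^{-2}=O(N/m)$ (the count being $0$ for $d>N$). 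Note the main term is independent of $\lambda$, which is exactly what makes the sum over $\lambda$ collapse to the factor $\#(\Z/m\Z)^\times=\phi(m)$.

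To finish I would recognize $\sum_{\gcd(d,m)=1}\mu(d)d^{-2}=\zeta(2)^{-1}\prod_{p\mid m}(1-p^{-2})^{-1}$, substitute into the two displays, and simplify using $\phi(m)=m\prod_{p\mid m}(1-p^{-1})$ and $\zeta(2)=\pi^2/6$; the product over $p\mid m$ then telescopes, since $\tfrac{1}{m}\cdot(1-p^{-1})\cdot(1-p^{-2})^{-1}=\tfrac{1}{m}\cdot\tfrac{p}{p+1}$ at each prime, yielding the stated leading term $\frac{12N^2}{\pi^2 m}\prod_{p\mid m}\frac{p}{p+1}$ with error $O(N\log N)$. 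The argument is entirely routine; the only points that demand care are the passage between ``reduces to $[u:v]$ in $\PP^1(\Z/m\Z)$'' and honest integer congruences — in particular that the unit‑scaling ambiguity contributes exactly $\phi(m)$ disjoint residue classes — and uniform bookkeeping of the error terms, which is harmless because $m$ is fixed so any $\log(N/m)$ is $O(\log N)$. I do not anticipate a genuine obstacle.
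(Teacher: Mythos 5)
Your proof is correct and follows essentially the same route as the paper's: a Schanuel-type lattice count in which the reduction condition is unfolded into $\varphi(m)$ unit-scalings of $(u,v)$, primitivity is imposed by a M\"obius sieve over $d$ coprime to $m$, and the resulting sum $\sum_{\gcd(d,m)=1}\mu(d)d^{-2}$ is evaluated as $\frac{6}{\pi^2}\prod_{p\mid m}\left(1-p^{-2}\right)^{-1}$ before simplifying with $\varphi(m)=m\prod_{p\mid m}(1-p^{-1})$. The only differences are cosmetic: you quotient by the sign involution where the paper normalizes $b\ge 1$, $a\neq 0$, and you combine the congruences into a single modulus $dm$ via CRT where the paper tiles the box into $m\times m$ blocks.
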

        \begin{proof}
        We follow the standard argument of counting points of bounded height, first written out by Schanuel, while keeping track of the modulo \(m\) images.
        
        We would like to estimate
        \[F(N)=\#\{(a,b)\in\Z^2: b\geq 1,\ a\neq 0,\ \max\{|a|,|b|\}\leq N,\ \gcd(a,b)=1,\ [a:b]\bmod m={[u:v]}\}.\]
        The only points that have been dropped from  $\PP^1(\Q)$ are $[0:1]$ and $[1:0]$, which are finitely many points. 
        
        We first allow $\gcd(a,b)\geq 1$ but assume that $\gcd(a,b,m)=1$, and will fix this discrepancy later.
        The condition $[a:b]\equiv [u:v]\bmod m$ is equivalent to $a\equiv ug\bmod m$ and $b\equiv vg\bmod m$ for some integer $g\in(\Z/m\Z)^\times$. There are precisely $\varphi(m)=\#(\Z/m\Z)^\times$ many choices of  $g\in(\Z/m\Z)^\times$ satisfying the equations because \(u,v\) generate \(\Z/m\Z\).
        We cut up the box defined by $1\leq b\leq N$ and $1\leq |a|\leq N$ into $m\times m$ boxes, we have
        \[\#\{(a,b)\in\Z^2:1\leq |a|, b\leq N,
        \ \gcd(a,b,m)=1,\ [a:b]\equiv {[u:v]}\bmod m\}
                =\varphi(m)\frac{2N^2}{m^2}+O(N).
        \]
        
        To impose the $\gcd(a,b)=1$ condition, we will carry out M\"obius inversion.
        Suppose $A\mid \gcd(a,b)$, then we can take a factor of $A$ out of both $a$ and $b$, and the new $a,b$ runs up to $N/A$. Therefore we deduce that
        \[S(N,A){\colonequals}\sum_{1\leq b\leq N}\sum_{\substack{1\leq |a|\leq N\\  [a:b]\equiv [u:v]\bmod m\\ \gcd(a,b,m)=1\\ A\mid \gcd(a,b)}}1
        =\varphi(m)\frac{2N^2}{A^2m^2}+O\left(\frac{N}{A}\right).\]
        M\"obius inversion formula gives
        \[F(N)=\sum_{\substack{A\leq N\\\gcd(A,m)=1}} \mu(A)S(N,A)
            =\left(\frac{2N^2\varphi(m)}{m^2}\sum_{\substack{A\leq N\\\gcd(A,m)=1}}\frac{\mu(A)}{A^2}\right)+O\left(N\log N\right)
            .\]
        If we extend the inner sum to all positive integers $A$, we have 
        \[\sum_{\substack{A\\\gcd(A,m)=1}}\frac{\mu(A)}{A^2}
        =\prod_{p\nmid m}\left(1-\frac{1}{p^2}\right)
        =\frac{1}{\zeta(2)}\prod_{p\mid m}\left(1-\frac{1}{p^2}\right)^{-1}
        =\frac{6}{\pi^2}\prod_{p\mid m}\left(1-\frac{1}{p^2}\right)^{-1}.
        \]
        Truncating the infinite sum, we have
        \[\sum_{\substack{A\leq N\\\gcd(A,m)=1}}\frac{\mu(A)}{A^2}=\sum_{\substack{A\\\gcd(A,m)=1}}\frac{\mu(A)}{A^2}+O\left(\sum_{A> N}\frac{1}{A^2}\right)
        =\frac{6}{\pi^2}\prod_{p\mid m}\left(1-\frac{1}{p^2}\right)^{-1}+O\left(\frac{1}{N}\right).
        \]
        Therefore
        \[F(N)=\frac{12N^2\varphi(m)}{\pi^2 m^2}\prod_{p\mid m}\left(1-\frac{1}{p^2}\right)^{-1}+O\left(N\log N\right)
            = \frac{12N^2}{\pi^2 m}\prod_{i=1}^r\left(1+\frac{1}{p_i}\right)^{-1}+O\left(N\log N\right)\]
        as required.
        \end{proof}

\begin{bibdiv}
\begin{biblist}

\bib{Beckmann}{article}{
      author={Beckmann, Sybilla},
       title={On extensions of number fields obtained by specializing branched
  coverings},
        date={1991},
        ISSN={0075-4102,1435-5345},
     journal={J. Reine Angew. Math.},
      volume={419},
       pages={27\ndash 53},
         url={https://doi.org/10.1515/crll.1991.419.27},
      review={\MR{1116916}},
}

\bib{BELOV}{article}{
      author={Bourdon, Abbey},
      author={Ejder, \"Ozlem},
      author={Liu, Yuan},
      author={Odumodu, Frances},
      author={Viray, Bianca},
       title={On the level of modular curves that give rise to isolated
  {$j$}-invariants},
        date={2019},
        ISSN={0001-8708,1090-2082},
     journal={Adv. Math.},
      volume={357},
       pages={106824, 33},
         url={https://doi.org/10.1016/j.aim.2019.106824},
      review={\MR{4016915}},
}

\bib{CasselsGoN}{book}{
      author={Cassels, J. W.~S.},
       title={An introduction to the geometry of numbers},
      series={Classics in Mathematics},
   publisher={Springer-Verlag, Berlin},
        date={1997},
        ISBN={3-540-61788-4},
        note={Corrected reprint of the 1971 edition},
      review={\MR{1434478}},
}

\bib{DG2012}{article}{
      author={D\`ebes, Pierre},
      author={Ghazi, Nour},
       title={Galois covers and the {H}ilbert-{G}runwald property},
        date={2012},
        ISSN={0373-0956,1777-5310},
     journal={Ann. Inst. Fourier (Grenoble)},
      volume={62},
      number={3},
       pages={989\ndash 1013},
         url={https://doi.org/10.5802/aif.2714},
      review={\MR{3013814}},
}

\bib{DL-TAMS}{article}{
      author={D\`ebes, Pierre},
      author={Legrand, Fran\c~cois},
       title={Specialization results in {G}alois theory},
        date={2013},
        ISSN={0002-9947,1088-6850},
     journal={Trans. Amer. Math. Soc.},
      volume={365},
      number={10},
       pages={5259\ndash 5275},
         url={https://doi.org/10.1090/S0002-9947-2013-05800-X},
      review={\MR{3074373}},
}

\bib{Grant-Leitzel}{article}{
      author={Grant, Ken},
      author={Leitzel, Joan},
       title={Norm limitation theorem of class field theory},
        date={1969},
        ISSN={0075-4102,1435-5345},
     journal={J. Reine Angew. Math.},
      volume={238},
       pages={105\ndash 111},
      review={\MR{249400}},
}

\bib{EGA4-3}{article}{
      author={Grothendieck, A.},
       title={{\'E}l\'ements de g\'eom\'etrie alg\'ebrique. {IV}. {\'e}tude
  locale des sch\'emas et des morphismes de sch\'emas. {III}},
        date={1966},
        ISSN={0073-8301,1618-1913},
     journal={Inst. Hautes \'Etudes Sci. Publ. Math.},
      number={28},
       pages={255},
         url={http://www.numdam.org/item?id=PMIHES_1966__28__255_0},
      review={\MR{217086}},
}

\bib{EGA4-4}{article}{
      author={Grothendieck, A.},
       title={{\'E}l\'ements de g\'eom\'etrie alg\'ebrique. {IV}. {\'e}tude
  locale des sch\'emas et des morphismes de sch\'emas {IV}},
        date={1967},
        ISSN={0073-8301,1618-1913},
     journal={Inst. Hautes \'Etudes Sci. Publ. Math.},
      number={32},
       pages={361},
         url={http://www.numdam.org/item?id=PMIHES_1967__32__361_0},
      review={\MR{238860}},
}

\bib{Iwasawa55}{article}{
      author={Iwasawa, Kenkichi},
       title={On {G}alois groups of local fields},
        date={1955},
        ISSN={0002-9947},
     journal={Trans. Amer. Math. Soc.},
      volume={80},
       pages={448\ndash 469},
         url={https://doi.org/10.2307/1992998},
      review={\MR{75239}},
}

\bib{KLN-GrunwaldRamified}{article}{
      author={K\"onig, Joachim},
      author={Legrand, Fran\c{c}ois},
      author={Neftin, Danny},
       title={On the local behavior of specializations of function field
  extensions},
        date={2019},
        ISSN={1073-7928,1687-0247},
     journal={Int. Math. Res. Not. IMRN},
      number={9},
       pages={2951\ndash 2980},
         url={https://doi.org/10.1093/imrn/rny016},
      review={\MR{3947643}},
}

\bib{Murty-Scherk}{article}{
      author={Kumar~Murty, Vijaya},
      author={Scherk, John},
       title={Effective versions of the {C}hebotarev density theorem for
  function fields},
        date={1994},
        ISSN={0764-4442},
     journal={C. R. Acad. Sci. Paris S\'{e}r. I Math.},
      volume={319},
      number={6},
       pages={523\ndash 528},
}

\bib{Legrand}{article}{
      author={Legrand, Fran\c~cois},
       title={Hilbert specialization results with local conditions},
        date={2017},
        ISSN={0035-7596,1945-3795},
     journal={Rocky Mountain J. Math.},
      volume={47},
      number={6},
       pages={1917\ndash 1945},
         url={https://doi.org/10.1216/RMJ-2017-47-6-1917},
      review={\MR{3725250}},
}

\bib{QingLiu}{book}{
      author={Liu, Qing},
       title={Algebraic geometry and arithmetic curves},
      series={Oxford Graduate Texts in Mathematics},
   publisher={Oxford University Press, Oxford},
        date={2002},
      volume={6},
        ISBN={0-19-850284-2},
        note={Translated from the French by Reinie Ern\'{e}, Oxford Science
  Publications},
}

\bib{Neukirch}{book}{
      author={Neukirch, J\"{u}rgen},
       title={Algebraic number theory},
      series={Grundlehren der mathematischen Wissenschaften [Fundamental
  Principles of Mathematical Sciences]},
   publisher={Springer-Verlag, Berlin},
        date={1999},
      volume={322},
        ISBN={3-540-65399-6},
         url={https://doi.org/10.1007/978-3-662-03983-0},
        note={Translated from the 1992 German original and with a note by
  Norbert Schappacher, With a foreword by G. Harder},
}

\bib{PoonenRaPt}{book}{
      author={Poonen, Bjorn},
       title={Rational points on varieties},
      series={Graduate Studies in Mathematics},
   publisher={American Mathematical Society, Providence, RI},
        date={2017},
      volume={186},
        ISBN={978-1-4704-3773-2},
         url={https://doi.org/10.1090/gsm/186},
      review={\MR{3729254}},
}

\bib{VirayVogt}{article}{
      author={Viray, Bianca},
      author={Vogt, Isabel},
       title={Isolated and parameterized points on curves},
       pages={26pp.},
        note={Available at \url{https://arxiv.org/abs/2406.14353}},
}

\end{biblist}
\end{bibdiv}

\end{document}